\newtheorem{theorem}{Theorem}[section]
\newtheorem{definition}{Definition}[section]
\newtheorem{lemma}{Lemma}[section]
\newtheorem{proposition}[theorem]{Proposition}
\newtheorem{remark}{Remark}[section]
\newenvironment{proof}{\medskip\noindent{\bf Proof.}\;}{\null\hfill $\Box$\par\medskip }
\newcommand{\cf}{\ensuremath{\mathcal F}}
\newcommand{\supp}{{\rm supp\,}}
\newcommand{\Beas}{\begin{eqnarray*}}
\newcommand{\Eeas}{\end{eqnarray*}}
\title{Hyperbolic wavelet analysis of classical isotropic and anisotropic Besov-Sobolev spaces}
\author{Martin Sch\"afer$^a$\footnote{Corresponding author. Email:
		martin.schaefer@math.tu-chemnitz.de}, Tino Ullrich$^a$, B\'eatrice Vedel$^b$ \\\\
$^a$Chemnitz Technical University, 09111 Chemnitz, Germany\\
$^b$Universit\'e de Bretagne Sud, UMR 6205, LMBA, 56000 Vannes, France}
\date{\today}
\DeclareMathOperator{\sgn}{sgn}
\def \Z {\mathbb{Z}}
\def \N {\mathbb{N}}
\def \R {\mathbb{R}}
\def \C {\mathbb{C}}
\def \S {{\mathcal{S}}}
\newcommand{\E}{\mathbb{E}}
\def \B {\widetilde{B}}
\def \F {\widetilde{F}}
\def \bj{\bar{j}}
\def \br{\bar{r}}
\def \bk{\bar{k}}
\def \bm{\bar{m}}
\def \bw{\bar{w}}
\def \bl{\bar{\ell}}
\def \balpha{\bar{\alpha}}
\def \bgamma{\bar{\gamma}}
\def \sign{\mathrm{sign}}
\def\bbone{{\mathbbm 1}}
\newcommand{\amin}{\alpha_{\text{min}}}
\begin{document}

\maketitle

\begin{abstract}
In this paper we introduce new
function spaces which we call anisotropic hyperbolic Besov and Triebel-Lizorkin spaces. 
Their definition is based on a
hyperbolic Littlewood-Paley analysis involving an anisotropy vector only occurring in the smoothness weights. Such spaces provide a general and natural setting in order to understand what kind of anisotropic smoothness can be described using hyperbolic wavelets (in the literature also sometimes called tensor-product wavelets), a wavelet class which hitherto has been mainly used to characterize spaces of dominating mixed smoothness.

A centerpiece of our present work are characterizations of these new spaces based on the hyperbolic wavelet transform. Hereby we treat both, the standard approach using wavelet systems equipped with sufficient smoothness, decay, and vanishing moments, but also the very simple and basic hyperbolic Haar system.

The second major question we pursue
is the relationship between the novel hyperbolic spaces and the classical anisotropic Besov-Lizorkin-Triebel scales. 
As our results show, in general, both approaches to
resolve an anisotropy do not coincide.
However, in the Sobolev range this is the case, providing a link to apply the
newly obtained
hyperbolic wavelet characterizations to the classical setting.
In particular, this allows for detecting classical
anisotropies via the coefficients of a universal hyperbolic wavelet basis, without 
the need of adaption of the basis or a-priori knowledge on the anisotropy.
\end{abstract}

\noindent
{\bf Keywords :} Hyperbolic wavelet analysis, Hyperbolic Haar wavelet, Anisotropic Sobolev and Besov-Lizorkin-Triebel spaces.\\
{\bf 2010 Mathematics Subject Classification : } 42C40, 46E35, 42B25, 41A25.

\section{Introduction}
With the development of wavelet analysis from the beginning of the 1980s until the present time we nowadays have several powerful tools at
hand to perform signal analysis with the aim to extract important information out of a signal. The information is thereby
usually coded in objects easy to compute and handle -- the wavelet coefficients.

Wavelet methods have been used with the known success for the purpose of compression, denoising, inpainting, classification, etc., of data, to
mention just a few. Roughly speaking, the common underlying idea is the fact that a few wavelet coefficients contain a
rather complete information of the signal to be analyzed. However, due to their construction principle (dyadic dilations
and integer translates of a few basic ``mother'' functions) classical wavelets are not well-suited for the analysis of,
say, anisotropic signals. In fact, a signal which is rather smooth in $x$-direction but rough in $y$-direction
(such as layers in the earth, stripes on a shirt, etc.) can not be properly resolved by a classical multi-resolution analysis.
The respective wavelet coefficients do not contain the anisotropic smoothness information, they rather resolve a certain
minimal smoothness. That results in a bad decay of the sequence of wavelet coefficients or, in other words, a bad
compression rate.

Anisotropy is not a rare phenomenon since it arises whenever physics does not act the same in different directions,
e.g., geophysics, oceanography, hydrology, fluid mechanics, or medical image processing (see \cite{bonami:estrade:2003, SL87} among others) are some of the fields where it naturally appears. For this reason
wavelets have been adapted in many different ways in order to ``detect'' and resolve anisotropy. There is a vast
amount of literature dealing with this. For instance, there are wave atoms \cite{demanet:ying:2007} as well as
curvelets \cite{CanDo2000,CandDon02newtight,candes:demanet:2005},
shearlets \cite{KLLW05,Guo2006SparseMR,lakhonchai:sampo:sumetkijakan:2012}, anisets, and anisotropic wavelets \cite{triebel:2004,triebel:2006,hochmuth:2002a}. The latter concept represents a rather flexible construction since it can
be build (theoretically) for any present anisotropy. The theoretical basis of anisotropic wavelet analysis is the
equivalent characterization of corresponding anisotropic function spaces, like H\"older, Besov, Sobolev and
Triebel-Lizorkin spaces. The major shortcoming of the existing theory is the fact that one has to know the anisotropy
in advance, i.e., one has to adapt the wavelet accordingly. In other words, if physics does not provide the
anisotropy parameters of the signal we are not able to resolve the signal accordingly without ``trying out'' several
anisotropic bases. Such a method is, of course, hardly implementable in practice.

In Abry et.\ al.\ \cite{ACJRV:2014} it has been shown that any anisotropic Besov space -- defined with respect to the
cartesian axis -- can ``almost'' be characterized with the help of the so-called hyperbolic wavelet transform. The
anisotropy of the signal can then be detected using a uniform basis and is characterized by a special weight in the
wavelet coefficients. This has led to an efficient algorithm for image classification and anisotropy detection applied to both synthetic and real textures (see \cite{roux:clausel:vedel:jaffard:abry:2012,ARWM}).

In this paper we further develop this idea of describing anisotropy with the help of the
hyperbolic wavelet transform. For this reason we introduce a new family of anisotropic function spaces which are defined via a hyperbolic Littlewood-Paley analysis and for which we prove exact characterization with hyperbolic wavelets. The motivation behind this is to provide a general setting of anisotropic spaces characterized by one single basis of wavelets and thus to understand how one such fixed basis can help to describe anisotropic smoothness.

Concretely, we start with a hyperbolic Littlewood-Paley analysis defined as the usual tensor product 
\begin{equation}\label{f12}
   \Delta_{\bj}(f) := \cf^{-1}[\theta_{j_1}\otimes...\otimes \theta_{j_d}\cf f]\quad,\quad \bj=(j_1,\ldots,j_d)\in \N_0^d\,.
\end{equation}
This hyperbolic decomposition of the frequency space has been widely used for the Fourier analytic definition of the
well-known spaces with dominating mixed smoothness, see \cite{SchTr87, Vyb06} and the references therein. These
spaces represent a suitable framework for multivariate appoximation, see \cite{devore:konyagin:temlyakov:1998, Te93} and the recent survey article \cite{DuTeUl2015}. The main ideas have been developed over more than fifty
years
of intense research in the former Soviet Union such that it is  beyond
the scope of this paper to name all the relevant references (cf.\ \cite{DuTeUl2015}).

Based on the
decomposition~\eqref{f12}, 
we then define spaces $\widetilde{A}^{s,\balpha}_{p,q}(\R^d)$ with $A\in \{B,F\}$ of Besov-Lizorkin-Triebel type involving an anisotropy vector $\balpha=(\alpha_1,\ldots,\alpha_d) >0$ with $\sum_{i=1}^d \alpha_i = d$. As a special
case ($A=F$, $1<p<\infty$, $q=2$), these include the Sobolev type spaces
$\widetilde{W}^{s,\balpha}_{p}(\R^d):=\F^{s,\balpha}_{p,2}(\R^d)$, where
$$
      \|f\|_{\widetilde{F}^{s,\balpha}_{p,q}(\R^d)}:=\Big\|\Big(\sum\limits_{\bj\in
\N_0^d}2^{qs\|\bj/\balpha\|_{\infty}}|\Delta_{\bj}(f)(\cdot)|^q\Big)^{1/q}\Big\|_p \quad,\quad f\in\mathcal{S}^{\prime}(\R^d)\,.
$$
It is important to note that the anisotropy hereby only enters in the weight $2^{s\|\bj/\balpha\|_{\infty}}$, where we use the short-hand notation $\|\bj/\balpha\|_{\infty}:=\max\{j_1/\alpha_1,...,j_d/\alpha_d\}$, but not in the
choice of the Littlewood-Paley decomposition. 

One of the main results of this paper is the coincidence (with respect to
equivalent norms)
\begin{equation}\label{coinc}
    \widetilde{W}^{s,\balpha}_p(\R^d) = W^{s,\balpha}_p(\R^d)\,,\quad\text{if $1<p<\infty$}\,,
\end{equation}
where the space on the right-hand side represents the classical anisotropic Sobolev space defined in
\eqref{cls_iso} below. This relation has already been observed for isotropic (i.e. $\balpha = (1,...,1)$)
Hilbert-Sobolev spaces ($p=2$) on the $d$-torus, see \cite{GrKn09,DiUl12}, as well as on $\R^2$ in \cite{ACJRV:2014}. Our result extends this observation to all
$1<p<\infty$. Surprisingly, such a coincidence in the spirit of \eqref{coinc} is only possible in the
Sobolev case. To be more precise, it holds
$$
    \widetilde{A}^{s,\balpha}_{p,q}(\R^d) = A^{s,\balpha}_{p,q}(\R^d) \quad\text{if and only if\: $A=F$, $1<p<\infty$,\, and
    	$q=2$.}
$$

As an important consequence of this equality~\eqref{coinc}, we can further prove that it is possible  
to characterize (e.g.\ detect and classify) classical anisotropies described by the spaces $W^{s,\balpha}_p(\R^d)$ via the wavelet coefficients of a universal hyperbolic wavelet basis. Compared to the classical approach using anisotropic wavelets, this approach has the advantage that one does not need a-priori knowledge on the anisotropies, otherwise required for constructing 
the ``right'' basis.
In particular, our results entail that any sufficiently regular orthonormal basis  $(\psi_{\bj,\bk})_{\bj,\bk}$ of tensorized wavelets $\psi_{\bj,\bk}=\psi_{j_1,k_1} \otimes\cdots\otimes \psi_{j_d,k_d}$ constitutes an unconditional Schauder basis for $W^{s,\balpha}_{p}(\R^d)$, whose coefficients,  
measured in an appropriate corresponding sequence space,  
give rise to an equivalent norm on $W^{s,\balpha}_{p}(\R^d)$, i.e. for $ f\in W^{s,\balpha}_{p}(\R^d)$ with coefficients $\langle f, \psi_{\bj,\bk}\rangle$
$$
\|f\|_{W^{s,\balpha}_{p}(\R^d)} \asymp
\Big\|\Big(\sum\limits_{\bj\in \N_0^d}2^{2s\|\bj/\balpha\|_{\infty}}
\Big|\sum\limits_{\bk\in \Z^d}\langle f, \psi_{\bj,\bk}\rangle \chi_{\bj,\bk}(\cdot)\Big|^2\Big)^{1/2}\Big\|_p 
\,.
$$

This is stated in Theorem~\ref{thm:main_wave1}. A similar result, see Theorem~\ref{thm:haarSobolev}, holds true for the hyperbolic Haar system $\mathcal{H}_d=(h_{\bj,\bk})_{\bj,\bk}$, where $h_{\bj,\bk}=h_{j_1,k_1} \otimes\cdots\otimes h_{j_d,k_d}$, under the following restriction on the parameter $s$ of the space
$W^{s,\balpha}_{p}(\R^d)$, 
\begin{align*}
|s|/\alpha_{\text{min}}<
\min\Big\{\frac{1}{p},1-\frac{1}{p}\Big\}.
\end{align*}
In this direction, we would also like to mention the new and related findings of Oswald in~\cite{Oswald19} on the Schauder basis property of the hyperbolic Haar system 
in the classic isotropic Besov spaces defined via first-order moduli of smoothness.

At the center of our respective proofs, 
we will rely on discrete characterizations provided by hyperbolic wavelets for the spaces $\widetilde{A}^{s,\balpha}_{p,q}(\R^d)$, $A\in\{B,F\}$. These characterizations are fundamental and established in separate theorems, Theorem~\ref{main_wav} and Theorem~\ref{thm:haarmain}, whereby we follow two paths.
On the one hand, we use the usual methodology and consider orthonormal wavelet bases for which we
assume sufficient smoothness, decay, and vanishing moments.
As a byproduct, we thereby significantly extend the wavelet characterizations in \cite{Vyb06, RaUl10} for Besov-Lizorkin-Triebel spaces with dominating mixed smoothness.
On the other hand, we use a hyperbolic Haar system, which does not fulfill smoothness conditions as before but nevertheless allows for characterization in a certain restricted parameter range.

Let us remark that analysis with the Haar wavelet has a long tradition (see e.g.\ \cite{Golubov1972,Romanyuk2014,Romanyuk2016,Romanyuk2016b,Andrianov1999}), the Haar wavelet being the
oldest and simplest orthonormal wavelet, conceived as early as 1909~\cite{Haar1911}. Besides its elegance and simplicity, notably its connection to the Faber system~\cite{Faber1910} and other spline functions, such as e.g.\ the Chui-Wang wavelet~\cite{CWwavelet}, makes it interesting from a numerical perspective. In particular in imaging science it plays an important role in practical applications.
Recently, it has attracted renewed attention with a series of publications
\cite{SeU2015,SeUl2015,Garrigos2017,GaSeUl2019,GaSeUl2019b,SU2019,DerUll19}.

The paper has the following {\bf structure.} After having recalled in Section~2 some helpful Fourier analytic tools (in particular some classical maximal functions and associated inequalities) as well as the definition of the classical (anisotropic) function spaces $A^{s,\balpha}_{p,q}(\R^d)$, where $A\in\{B,F\}$, we introduce in Section~3 the notion of hyperbolic Littlewood-Paley analysis and the related 
Besov-Lizorkin-Triebel spaces $\widetilde{A}^{s,\balpha}_{p,q}(\R^d)$. Wavelet characterizations of these new hyperbolic spaces are the topic of Sections 4 and 5, whereby we 
first resort to standard wavelets with sufficient smoothness, decay, and vanishing moments in Section~4, while
in Section 5 we utilize a hyperbolic Haar basis.
The relationship of the new scale to the traditional spaces
is finally investigated in Sections 6 and 7. Specifically, in Section~6, we show 
the equality $\widetilde{W}^{s,\balpha}_p(\R^d) = W^{s,\balpha}_p(\R^d)$, i.e. $\widetilde{F}^{s,\balpha}_{p,2}(\R^d)=F^{s,\balpha}_{p,2}(\R^d)$, in the range $1<p<\infty$, from which we can 
then extract our main theorems concerning hyperbolic wavelet characterizations of
the classical $W^{s,\balpha}_p(\R^d)$.

Let us agree on the following general {\bf notation.} As usual $\N$ shall denote the natural numbers. We further put $\N_0:=\N\cup\{0\}$, and let
$\Z$ denote the integers,
$\R$ the real numbers,
and $\C$ the complex numbers. By $\mathbb{T}:=\R/2\pi \Z$ we refer to the torus identified with the interval $[0,2\pi]\subset\R$. We write $\langle x,y\rangle$ or $x\cdot y$ for the Euclidean inner product in $\R^d$ or
$\C^d$. The letter $d$ is hereby always reserved for the underlying dimension 
and by $[d]$ we mean the set
$\{1,...,d\}$. For $0<p\leq \infty$ and $x\in \R^d$ we define $\|x\|_p := (\sum_{i=1}^d |x_i|^p)^{1/p}$, with the
usual modification in the case $p=\infty$. If $1 \leq p\leq \infty$ we set $p'$ such that $1/p+1/p' = 1$. For $0<p,q\leq \infty$ we further denote
$\sigma_{p,q}:=\max\{1/p-1,1/q-1,0\}$ and
$\sigma_p:=\max\{1/p-1,0\}$. 
We also put $x_{+}:= ((x_1)_+,...,(x_d)_+)$, whereby $a_+:=\max\{a,0\}$ for $a\in\R$.
Analogously we define $x_-$. 
By $(x_1,\ldots,x_d)>0$ we shall mean that each coordinate is positive.
Finally, as usual, $a\in \R$ is decomposed into $a = \lfloor a
\rfloor + \{a\}$, where $0\leq \{a\} <1$ and $\lfloor a
\rfloor\in\Z$. In case $x\in\R^d$, $\{x\}$ and $\lfloor x
\rfloor$ are then meant component-wise.
Multivariate indices are typesetted with a bar, like e.g.\ $\bk,\bj,\bl$, or $\bm$, to indicate the multi-index.
In all the paper, the multi-index
$\balpha = (\alpha_1, ..., \alpha_d) > 0$ thereby stands for
an anisotropy and is such that $\alpha_1+ ... + \alpha_d = d$. In addition, we here use the abbreviations $\alpha_{\text{min}}:=\min\{\alpha_1,..,\alpha_d\}$ and
$\alpha_{\text{max}}:=\max\{\alpha_1,..,\alpha_d\}$. The notation $\balpha/\bj$ shall always stand for $(\alpha_1/j_1,\ldots,\alpha_d/j_d)$.
Given a positive real $a>0$, we further write $a^{\balpha}$ for the vector $(a^{\alpha_1}, \ldots,a^{\alpha_d})$ and let $f(a^{\balpha}x):=f(a^{\alpha_1}x_1,\ldots,a^{\alpha_d}x_d)$ be the anisotropically scaled version of the function $f:\R^d\to\C$. 
For two (quasi-)normed spaces $X$ and $Y$, the \mbox{(quasi-)norm}
of an element $x\in X$ will be denoted by $\|x\|_X$.
The symbol $X \hookrightarrow Y$ indicates that the
identity operator is continuous. For two sequences $a_n$ and $b_n$ we will write $a_n \lesssim b_n$ if there exists a
constant $c>0$ such that $a_n \leq c\,b_n$ for all $n$. We will write $a_n \asymp b_n$ if $a_n \lesssim b_n$ and $b_n
\lesssim a_n$.

\section{Classical spaces and tools from Fourier analysis}
\label{section2}
\noindent
Let $L_p=L_p(\R^d)$, $0 < p\le\infty$, be the Lebesgue space of all measurable functions
$f:\R^d\to\C$ such that
\[
\|f\|_p := \Big(\int_{\R^d} |f(x)|^p dx \Big)^{1/p} < \infty \,,
\]
with the usual modification if $p=\infty$.
We will also need $L_p$-spaces on compact domains $\Omega\subset\R^d$
instead of $\R^d$ and shall
write $\|f\|_{L_p(\Omega)}$ for the corresponding restricted \mbox{$L_p$-(quasi-)norms}.

For $k\in \N_0$, we denote by $C^{k}_0(\R^d)$ the collection of all compactly supported
functions $\varphi$ on $\R^d$ which have uniformly continuous derivatives $D^{\bar{\gamma}}\varphi$ on $\R^d$
whenever $\| \bar{\gamma}\|_{1} \leq k$.
Additionally, we define the spaces of infinitely differentiable functions $C^{\infty}(\R^d)$ and infinitely differentiable
functions with compact support $C_0^{\infty}(\R^d)$ as well as the Schwartz space $\S=\S(\R^d)$ of
all rapidly decaying infinitely differentiable functions on $\R^d$, i.e.,
\[
\S(\R^d) := \bigl\{\varphi\in C^{\infty}(\R^d)\colon \|\varphi\|_{k,\ell}<\infty
\;\text{ for all } k,\ell\in\N\bigr\}\,,
\]
and
$$
    \|\varphi\|_{k,\ell}:=\Big\|(1+|\cdot|)^k\sum_{\|\bar{\gamma}\|_1\leq
\ell}|D^{\bar{\gamma}}\varphi(\cdot)|\Big\|_{\infty}\quad,\quad k,\ell \in \N\,.
$$

The space $\mathcal{S}'(\R^d)$, the topological dual of $\mathcal{S}(\R^d)$, is also referred to as the space of tempered
distributions on $\R^d$. Indeed, a linear mapping $f:\mathcal{S}(\R^d) \to \C$ belongs
to $\mathcal{S}'(\R^d)$ if and only if there exist numbers $k,\ell \in \N$
and a constant $c = c_f$ such that
$$
    |f(\varphi)| \leq c_f\|\varphi\|_{k,\ell}
$$
for all $\varphi\in \mathcal{S}(\R^d)$. Any locally integrable function $f$ on $\R^d$ belongs to $\mathcal{S}'(\R^d)$
in the sense that
\begin{equation*}
   f(\varphi) := \int_{\R^d} f(x)\varphi(x)\,dx\quad,\quad \varphi\in S(\R^d)\,.
\end{equation*}
The space $\mathcal{S}'(\R^d)$ is equipped with the weak$^{\ast}$-topology.

For $f\in L_1(\R^d)$ we define the Fourier transform
\[
\cf f(\xi)
\,=\, (2\pi)^{-d/2}\int_{\R^d} f(y) e^{- i \xi\cdot y} dy, \qquad \xi\in\R^d,
\]
and the corresponding inverse Fourier transform $\cf^{-1}f(\xi)=\cf f(-\xi)$.
As usual, the Fourier transform can be extended to $\mathcal{S}'(\R^d)$
by $(\cf f)(\varphi) := f(\cf \varphi)$, where
$\,f\in \mathcal{S}'(\R^d)$ and $\varphi \in \mathcal{S}(\R^d)$.
The mapping $\cf:\S'(\R^d) \to \S'(\R^d)$ is a bijection. 

The convolution $\varphi\ast \psi$ of two
square-integrable
functions $\varphi, \psi$ is defined via the integral
\begin{equation*}
    (\varphi \ast \psi)(x) = \int_{\R^d} \varphi(x-y)\psi(y)\,dy\,.
\end{equation*}
If $\varphi,\psi \in \mathcal{S}(\R^d)$ then $\varphi \ast \psi$ still belongs to
$\mathcal{S}(\R^d)$.
In fact, we have $\varphi \ast \psi\in\S(\R^d)$ even if $\varphi \in \mathcal{S}(\R^d)$
and $\psi\in L_1(\R^d)$. The convolution can be extended to $\mathcal{S}(\R^d)\times \mathcal{S}'(\R^d)$ via
$(\varphi\ast \psi)(x) = \psi(\varphi(x-\cdot))$, which makes sense pointwise and is
a $C^{\infty}$-function on $\R^d$.

\subsection{Classical (an)isotropic Littlewood-Paley analysis}

Subsequently,
$\balpha = (\alpha_1, ..., \alpha_d) > 0$ will denote
an anisotropy and be such that $\alpha_1+ ... + \alpha_d = d$. 
Anisotropic Besov spaces may then be introduced using an anisotropic 
Littlewood-Paley analysis depending on $\balpha$.
Classical isotropic spaces -- as a particular case of anisotropic spaces -- will thereby be obtained for $\balpha = (1,1,...,1)$.

Let $\varphi_0^{\balpha}  \ge 0$ belong to the Schwartz class ${\mathcal{S}}(\mathbb{R}^d)$ and be such that, for
$\xi=(\xi_1,...,\xi_d) \in \mathbb{R}^d$,
$$
\varphi_0^{\balpha}(\xi) = 1 \quad \text{if} \quad \sup_{i=1,2, ..., d} \vert \xi_{i} \vert \le 1\;,
$$
$$
\text{and} \,   \, \, \varphi_0^{\balpha}(\xi)=0 \quad \text{if}  \quad \sup_{i=1,...,d} \vert 2^{-\alpha_{i}} \xi_i \vert
\ge 1\;.
$$
For $j\in\mathbb{N}$, further define
\begin{align*}
\varphi_j^{\balpha}(\xi) &:= \varphi_0^{\balpha}(2^{-j \balpha}\xi)-\varphi_0^{\balpha}(2^{-(j-1)\balpha}\xi) \\
&= \varphi_0^{\balpha}(2^{-j \alpha_1}\xi_1, \ldots, 2^{-j \alpha_d}\xi_d)-\varphi_0^{\balpha}(2^{-(j-1)\alpha_1}\xi_1, \ldots, 2^{-(j-1)\alpha_d}\xi_d)\,.
\end{align*}
Then
$\sum_{j=0}^{\infty} \varphi_j^{\balpha}\equiv 1$,
and $(\varphi_j^{\balpha})_{j\geq 0}$ is called an {\it anisotropic resolution of unity}. It satisfies
\begin{equation*}
\mathrm{supp}(\varphi_0^{\balpha}) \subset R_1^{\balpha}, \quad \mathrm{supp}(\varphi_j^{\balpha})
\subset R_{j+1}^{\balpha} \setminus R_{j-1}^{\balpha}\;,
\end{equation*}
\begin{align*}
\text{ where} \, \, \, R_j^{\balpha} = \Big\lbrace \xi=(\xi_1, \ldots, \xi_d) \in \mathbb{R}^d\,:\;\vert
\xi_{i} \vert \le 2^{\alpha_i j}
\;\text{for } i\in [d]=\{1,\ldots,d\}  \Big\rbrace \;.
\end{align*}

For $f \in \mathcal{S}'(\mathbb{R}^d)$, we then define
$$
\Delta^ {\balpha}_j f = {\mathcal{F}}^{-1} ( \varphi^{\balpha}_j {\mathcal{F}}f )\;.
$$
The sequence $(\Delta^ {\balpha}_j f )_{j \ge 0}$ is called an {\it anisotropic Littlewood-Paley analysis} of $f$. With this tool,
the anisotropic Besov spaces are now defined as follows (see~\cite{bownik:ho:2005,bownik:2005}).

\begin{definition}
For $0<p \le \infty$, $0<q\le \infty$, $s\in \mathbb{R}$, the Besov space $B^{s,\balpha}_{p,q}(\mathbb{R}^d)$  is
defined by
$$
B^{s,\balpha}_{p,q}(\mathbb{R}^d) = \Big\{ f \in \mathcal{S}'(\mathbb{R}^d)~:~ \, \Big( \sum_{j \ge 0} 2^{jsq} \Vert
\Delta^ {\balpha}_j f \Vert_p^q \Big)^{1/q} <\infty \Big\}\,,
$$
with the usual modification for $q=\infty$.

This definition does not depend on chosen resolution of unity $\varphi_0^{\balpha}$ and the quantity
$$
\Vert f \Vert_{B^{s,\balpha}_{p,q}} = \Big( \sum_{j \ge 0}  2^{jsq} \Vert \Delta^ {\balpha}_jf \Vert_p^q
\Big)^{1/q}
$$
is a norm (resp. quasi-norm) on $B^{s,\balpha}_{p,q}(\mathbb{R}^d)$ for $1 \leq p, \, q \leq \infty$ (resp. $0< \min
\{p,q\} <1$) and with the usual modification if $q=\infty$.
\end{definition}

As in the isotropic case, anisotropic Besov spaces encompass a large class of classical anisotropic function spaces
(see~\cite{triebel:2006} for details).
For example, when $p=q=2$, the Besov spaces coincide with the anisotropic Sobolev spaces
and, when $p=q=\infty$, the  spaces $B^{s,\balpha}_{\infty,\infty}(\mathbb{R}^d)$ are called anisotropic H\"{o}lder
spaces and are denoted by $\mathcal{C}^{s,\balpha}(\mathbb{R}^{d})$.

\begin{definition}
For $0<p \le \infty$, $0<q\le \infty$, $s\in \mathbb{R}$, the Triebel-Lizorkin space
$F^{s,\balpha}_{p,q}(\mathbb{R}^d)$ is defined by
$$
F^{s,\balpha}_{p,q}(\mathbb{R}^d) = \Big\{ f \in \mathcal{S}'(\mathbb{R}^d)~:~ \, \Big\Vert \Big( \sum_{j \ge 0}
2^{jsq} \vert \Delta^ {\balpha}_j f(\cdot) \vert^q \Big)^{1/q} \Big\Vert_p <\infty \Big\}\,,
$$
with the usual modification for $q=\infty$.

This definition does not depend on the chosen resolution of unity $\varphi_0^{\balpha}$ and the quantity
$$
\Vert f \Vert_{F^{s,\balpha}_{p,q}} = \Big\Vert \Big( \sum_{j \ge 0}  2^{jsq} \vert \Delta^ {\balpha}_j f(\cdot) \vert^q
\Big)^{1/q} \Big\Vert_p
$$
is a norm (resp. quasi-norm) on $F^{s,\balpha}_{p,q}(\mathbb{R}^d)$ for $1 \leq p <\infty$ and $1 \le q \leq \infty$
(resp. $0< \min\{p,q\} <1$) and with the usual modification if $q=\infty$.
\end{definition}

If $q=2$ and $1<p<\infty$, the anisotropic Triebel-Lizorkin space coincides with the anisotropic Sobolev space denoted
by $W^{s,\balpha}_p(\mathbb{R}^d)$ :
\begin{equation}\label{cls_iso}
W^{s,\balpha}_{p} = \Big\{ f \in \mathcal{S}'(\mathbb{R}^d)~:~ \, \Big\Vert {\mathcal{F}}^{-1} \Big[\Big(\sum_{i=1}^d(1 +\xi_i^2 )^{1/2\alpha_i}\Big)^{s} {\mathcal{F}}f(\xi)\Big] \Big\Vert_p<\infty \Big\}\,.
\end{equation}

\begin{remark} {\em (i)} As mentioned before, if $\balpha= (1,...,1)$, it is easy to check that the spaces
$B^{s,\balpha}_{p,q}(\R^d)$
(resp. $F^{s,\balpha}_{p,q}(\R^d)$) coincide with the classical spaces $B^{s}_{p,q}(\R^d)$ (resp.
$F^s_{p,q}(\mathbb{R}^d)$). In addition, we have $F^{0,\balpha}_{p,2}(\R^d)= L_p(\R^d)$ in the range $1<p<\infty$.

{\em (ii)} Our understanding of anisotropic spaces coincides with the one in Triebel \cite{triebel:2006} (see also the
references therein). There are different (but related) notions of anisotropy in the Russian literature, see
Nikolskij \cite[Chapt.\ 4]{Nik75} or Temlyakov \cite[II.3]{Te93}. A consequence of our Theorem \ref{comp} below is the
fact that in case of $W$-spaces the mentioned approaches coincide and lead to the same notion of anisotropy.
However, in case of H\"older-Nikolskij spaces this is in general not the case as for instance Theorem \ref{neg_comp}
shows.
\end{remark}

\subsection{Maximal inequalities}

Let us provide here the maximal inequalities for the Hardy-Littlewood and Peetre maximal functions, respectively. For
further details we refer to \cite[1.2, 1.3]{Vyb06} or \cite[Chapt.\ 2]{SchTr87}\,.

For a locally integrable function $f:\R^d\to \C$ we denote by $Mf(x)$ the Hardy-Littlewood maximal function defined by
\begin{equation}
  (Mf)(x) = \sup\limits_{x\in Q} \frac{1}{|Q|}\int_{Q}\,|f(y)|\,dy\quad,\quad x\in\R^d \label{maxfunc}
  \,,
\end{equation}
where the supremum is taken over all cubes with sides parallel to the coordinate axes containing~$x$.
A vector valued generalization of the classical Hardy-Littlewood maximal inequality is due to
Fefferman and Stein \cite{FeSt71}.
\begin{theorem}[\cite{FeSt71}]\label{feffstein}\rm For $1<p<\infty$ and $1 < q \leq \infty$ there exists a constant $c>0$, such
that
  \begin{equation*}
      \Big\|\Big(\sum\limits_{\ell \in I} |M f_{\ell}|^q\Big)^{1/q}\Big\|_p \leq c
      \Big\|\Big(\sum\limits_{\ell \in I} |f_{\ell}|^q\Big)^{1/q}\Big\|_p
  \end{equation*}
  holds for all sequences $\{f_\ell\}_{\ell\in I}$ of locally Lebesgue-integrable functions on $\R^d$.
\end{theorem}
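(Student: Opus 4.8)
The statement is the classical Fefferman--Stein vector-valued maximal inequality, and the plan is to reduce it entirely to the scalar Hardy--Littlewood maximal theorem (strong type $(p,p)$ for $1<p\le\infty$ and weak type $(1,1)$) together with its weighted refinements. First I would dispose of the two extreme configurations. For $q=\infty$ one has the pointwise domination $\sup_{\ell\in I}(Mf_\ell)(x)\le M\big(\sup_{\ell\in I}|f_\ell|\big)(x)$, so the left-hand side is bounded by $\|Mg\|_p$ with $g=\sup_{\ell}|f_\ell|$, and the scalar strong $(p,p)$ estimate finishes the job. For $p=q$ with $1<q<\infty$ the two norms decouple, $\|(\sum_\ell|Mf_\ell|^q)^{1/q}\|_q^q=\sum_\ell\|Mf_\ell\|_q^q$, and the scalar inequality applied term by term gives the claim.

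For $1<q<\infty$ and $q<p<\infty$ the argument proceeds by duality in $L_{p/q}$. Writing the left-hand side as $\|\sum_\ell|Mf_\ell|^q\|_{p/q}^{1/q}$ and testing against $0\le u$ with $\|u\|_{(p/q)'}\le 1$ (note $(p/q)'>1$ as $p>q$), the essential input is the scalar weighted maximal inequality $\int (Mf_\ell)^q\,u\,dx\lesssim\int |f_\ell|^q\,(Mu)\,dx$. Summing over $\ell$, applying H\"older with the pair $(p/q,(p/q)')$, and bounding $\|Mu\|_{(p/q)'}\lesssim\|u\|_{(p/q)'}$ by the scalar strong $((p/q)',(p/q)')$ estimate yields the desired bound. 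This step also furnishes, for each fixed $p_0\in(q,\infty)$, a strong estimate at $p_0$ that I will reuse as an interpolation endpoint.

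The remaining range $1<p<q$ is the genuine \emph{obstacle}, since the preceding duality collapses when $p/q<1$. Here the plan is to first establish the vector-valued weak type $(1,1)$ bound, i.e.\ that $(f_\ell)\mapsto(Mf_\ell)$ maps $L_1(\ell^q)$ into $L_{1,\infty}(\ell^q)$, and then to interpolate. This weak-type estimate is the technical heart of the theorem: it is obtained by a Calder\'on--Zygmund decomposition adapted to the $\ell^q$-valued setting, running the stopping-time selection on the single scalar function $(\sum_\ell|f_\ell|^q)^{1/q}$ to produce one common family of cubes, splitting every component $f_\ell$ into a good and a bad part over this family, and controlling the good part by an $L_q$ bound and the bad part through its support. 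Granting it, Marcinkiewicz interpolation with the fixed Banach target $\ell^q$, between the weak $(1,1)$ bound and the strong bound at some $p_0>q$ from the previous paragraph, covers all $1<p<p_0$, in particular $1<p<q$. The step requiring real work rather than bookkeeping is exactly this vector-valued Calder\'on--Zygmund / weak-$(1,1)$ argument; everything else is duality and a term-by-term appeal to the scalar Hardy--Littlewood theorem.
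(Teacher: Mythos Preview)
The paper does not contain a proof of this theorem at all: it is stated as a classical result with a reference to Fefferman--Stein \cite{FeSt71} and is used as a black box throughout. So there is no ``paper's own proof'' to compare against.

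That said, your sketch is a correct outline and is essentially the original Fefferman--Stein argument. The handling of $q=\infty$ via pointwise domination, the diagonal case $p=q$, the duality/weighted-inequality step for $p>q$ (using $\int (Mf)^q u\,dx\lesssim \int |f|^q (Mu)\,dx$), and the vector-valued weak-$(1,1)$ via a Calder\'on--Zygmund decomposition of the scalar majorant $(\sum_\ell|f_\ell|^q)^{1/q}$ followed by Marcinkiewicz interpolation for $1<p<q$ --- all of this is exactly how the result is proved in \cite{FeSt71}. Your identification of the weak-$(1,1)$ estimate as the only nontrivial step is accurate; the good part is indeed controlled through the already-established diagonal $L_q(\ell^q)$ bound, and the bad part through the measure of the dilated stopping cubes. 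There is nothing to correct.
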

\noindent We require a direction-wise version of (\ref{maxfunc})
\begin{equation*}
  (M_i f)(x) = \sup\limits_{s>0}\frac{1}{2s}\int_{x_i-s}^{x_i+s} |f(x_1,...,x_{i-1},t,x_{i+1},...,x_d)|\,dt
  \quad,\quad x\in\R^d.
\end{equation*}
We denote the composition of these operators by  $M_e=\prod_{i\in e}M_i $, where $e$ is a subset of $[d]=\{1,\ldots,d\}$ and
$M_{\ell}M_k$ has to be interpreted as $M_{\ell}\circ M_k$. The following version of the Fefferman-Stein maximal
inequality is due to St\"ockert \cite{Stoe}.

\begin{theorem}[\cite{Stoe}]\label{bagby}\rm For $1<p<\infty$ and $1 <q \leq \infty$ there exists a constant $c>0$ such that
for
any $i\in [d]$
  \begin{equation*}
      \Big\|\Big(\sum\limits_{\ell \in I} |M_i f_{\ell}|^q\Big)^{1/q}\Big\|_p \leq c
      \Big\|\Big(\sum\limits_{\ell \in I} |f_{\ell}|^q\Big)^{1/q}\Big\|_p
  \end{equation*}
  holds for all sequences $\{f_\ell\}_{\ell\in I}$ of locally Lebesgue-integrable functions on $\R^d$.
\end{theorem}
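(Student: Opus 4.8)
The plan is to reduce this $d$-dimensional statement to the one–dimensional Fefferman--Stein inequality, that is, to Theorem~\ref{feffstein} specialized to $d=1$, by freezing all coordinates but the $i$-th and then integrating the resulting slice estimate via Fubini. The key observation is that $M_i$ acts only in the variable $x_i$, so that it is in effect a one-dimensional maximal operator applied to the slices of the $f_\ell$.

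First I would fix $i\in[d]$ and write a point as $x=(x_i,\hat x)$ with $\hat x=(x_1,\ldots,x_{i-1},x_{i+1},\ldots,x_d)\in\R^{d-1}$. For fixed $\hat x$ I set $g_\ell^{\hat x}(t):=f_\ell(x_1,\ldots,x_{i-1},t,x_{i+1},\ldots,x_d)$. Since every symmetric interval about $x_i$ is in particular an interval containing $x_i$, the centered average defining $M_i$ is dominated pointwise by the one-dimensional (uncentered) Hardy--Littlewood maximal function from \eqref{maxfunc} applied on $\R$, namely
\[
  (M_i f_\ell)(x_i,\hat x)
  =\sup_{s>0}\frac{1}{2s}\int_{x_i-s}^{x_i+s}|g_\ell^{\hat x}(t)|\,dt
  \le (M g_\ell^{\hat x})(x_i).
\]
Applying Theorem~\ref{feffstein} with $d=1$ to the family $\{g_\ell^{\hat x}\}_{\ell\in I}$ then gives, for each fixed $\hat x$ and with the usual modification for $q=\infty$,
\[
  \int_{\R}\Big(\sum_{\ell\in I}|(M_i f_\ell)(x_i,\hat x)|^q\Big)^{p/q}dx_i
  \le c^p\int_{\R}\Big(\sum_{\ell\in I}|g_\ell^{\hat x}(x_i)|^q\Big)^{p/q}dx_i,
\]
where decisively the constant $c=c(p,q)$ supplied by the one-dimensional inequality does not depend on $\hat x$.

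Finally I would integrate the last inequality over $\hat x\in\R^{d-1}$ and apply Fubini's theorem on both sides to reassemble the two $L_p(\R^d)$ norms; taking $p$-th roots yields the assertion with the same constant $c$. The only genuine obstacle here is measure-theoretic rather than analytic: one must check that for each $\ell$ the slice $g_\ell^{\hat x}$ is locally integrable for almost every $\hat x$ (which follows from the local integrability of $f_\ell$ on $\R^d$ together with Fubini) and that $(x_i,\hat x)\mapsto (M_i f_\ell)(x_i,\hat x)$ is jointly measurable, so that Fubini is legitimately applicable. The latter is obtained by reducing the supremum over $s>0$ to a countable supremum over rational $s$, using the continuity of $s\mapsto\frac{1}{2s}\int_{x_i-s}^{x_i+s}|g_\ell^{\hat x}|$, which exhibits $M_i f_\ell$ as a countable supremum of jointly measurable functions.
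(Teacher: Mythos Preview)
The paper does not supply its own proof of this theorem; it is simply quoted from the literature with the citation \cite{Stoe} and used as a black box (the sentence following it, ``Iteration of this theorem yields a similar boundedness property for the operator $M_{[d]}$,'' is already the next piece of content). So there is no argument in the paper to compare against.

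Your proof is correct and is the standard way to derive this directional version from the isotropic Fefferman--Stein inequality: freeze the $d-1$ transverse variables, apply Theorem~\ref{feffstein} in dimension one on each slice, and integrate the resulting $p$-th power inequality over $\hat x$ via Fubini. The pointwise domination $(M_i f_\ell)(x_i,\hat x)\le (Mg_\ell^{\hat x})(x_i)$ and the uniformity of the one-dimensional constant in $\hat x$ are exactly what makes the argument go through. Your handling of the measure-theoretic fine print (a.e.\ local integrability of slices from Fubini; joint measurability of $M_i f_\ell$ via a countable supremum over rational $s$) is appropriate and suffices.
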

Iteration of this theorem yields a similar boundedness property for the operator $M_{[d]}$.\\

\noindent The following construction of a maximal function is due to Peetre, Fefferman, and Stein.
Let $\bar{b}=(b_1,...,b_d)>0$, $a>0$, and $f \in L_1(\R^d)$ with $\cf f$ compactly
supported. We define the Peetre maximal function $P_{\bar{b},a}f$ by
\begin{equation}
  P_{\bar{b},a}f(x) = \sup\limits_{z\in \R^d} \frac{|f(x-z)|}{(1+|b_1z_1|)^a\cdot...\cdot (1+|b_dz_d|)^a}
  \quad.\label{petfefste}
\end{equation}

\begin{lemma}\label{maxunglem} Let $\Omega \subset \R^d$ be a compact set. Let further $a>0$
and $\bgamma = (\gamma_1,...,\gamma_d) \in \N_0^d$. Then there exist two constants
$c_1,c_2>0$ (independent of $f$) such that
  \begin{equation}
    \begin{split}
      P_{(1,...,1),a}(D^{\bgamma}f)(x) &\leq c_1 P_{(1,...,1),a} f(x)\\
      &\leq c_2\big(M_d\big(M_{d-1}\big(...\big(M_1|f|^{1/a}\big)...\big)
      \big)\big)^{a}(x)\quad
    \end{split}
    \label{g1}
  \end{equation}
  holds for all $f\in L_1(\R^d)$ with $\supp(\cf f) \subset \Omega$ and all $x\in\R^d$.
	The constants $c_1$, $c_2$ depend \mbox{on $\Omega$.}
\end{lemma}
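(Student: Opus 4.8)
The plan is to derive both estimates from the reproducing formula for band-limited functions. Since $\supp(\cf f)\subset\Omega$ with $\Omega$ compact, I first enclose $\Omega$ in a box and pick a tensor-product cut-off $\varphi=\varphi_1\otimes\cdots\otimes\varphi_d\in C_0^\infty(\R^d)$ with $\varphi\equiv1$ on $\Omega$. Putting $K:=\cf^{-1}\varphi$ (up to the usual normalising constant), the relation $\varphi\,\cf f=\cf f$ yields the identity $f=K*f$, where $K=K_1\otimes\cdots\otimes K_d$ is a tensor product of one-dimensional Schwartz functions. This product structure of $K$ is precisely what will generate the iterated one-dimensional operators $M_1,\dots,M_d$, and all constants below depend on $\Omega$ only through $K$.

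For the first inequality I would differentiate the reproducing formula, $D^{\bgamma}f=(D^{\bgamma}K)*f$, and estimate pointwise
\[
|D^{\bgamma}f(x-z)|\le\int_{\R^d}|(D^{\bgamma}K)(u)|\,|f(x-z-u)|\,du.
\]
Bounding $|f(x-z-u)|\le P_{(1,\dots,1),a}f(x)\prod_{i=1}^d(1+|z_i+u_i|)^a$ and using the submultiplicativity $(1+|z_i+u_i|)\le(1+|z_i|)(1+|u_i|)$, the factor $\prod_i(1+|z_i|)^a$ comes out and will cancel the Peetre weight, while $\int_{\R^d}|(D^{\bgamma}K)(u)|\prod_i(1+|u_i|)^a\,du<\infty$ because $D^{\bgamma}K\in\S(\R^d)$. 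Dividing by $\prod_i(1+|z_i|)^a$ and taking the supremum over $z$ gives the first inequality with $c_1$ equal to this integral; note that this step works for every $a>0$.

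For the second inequality the engine is the sub-mean-value property of band-limited functions: writing $h:=|f|^{1/a}$, there are a large $M$ and a constant $c$ (depending on $\Omega$) with
\[
|f(y)|^{1/a}\le c\int_{\R^d}\prod_i(1+|u_i|)^{-M}\,h(y-u)\,du\qquad(y\in\R^d).
\]
For $a\le1$ this is immediate from H\"older's inequality applied to $f=K*f$ (the exponent $1/a\ge1$), while for $a>1$ it is the standard band-limited estimate obtained from the $a$-trick together with the maximal inequalities. Setting $y=x-z$ and substituting $w=z+u$ converts this into
\[
|f(x-z)|^{1/a}\le c\int_{\R^d}\prod_i(1+|w_i-z_i|)^{-M}\,h(x-w)\,dw,
\]
so that everything is reduced to estimating a convolution of $h$ against a tensor kernel recentred at $w=z$.

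The decisive step --- and the one I expect to cause the most trouble --- is to bound this last integral by $c\prod_i(1+|z_i|)\,M_d(\cdots(M_1 h))(x)$ with only a \emph{single} power of each $(1+|z_i|)$; the crude estimate $\prod_i(1+|w_i-z_i|)^{-M}\le\prod_i(1+|z_i|)^{M}\prod_i(1+|w_i|)^{-M}$ recentres the kernel at the origin but loses the far too large power $M$ and is therefore useless. Instead I would split the $w_i$-integral at $|w_i|=2(1+|z_i|)$ in each coordinate separately: on the near part the average of $h$ over the interval of length $\sim(1+|z_i|)$ about $x_i$ costs exactly one factor $(1+|z_i|)$ and one operator $M_i$, whereas on the far part one has $|w_i-z_i|\ge|w_i|/2$, so the kernel regains its full decay and its convolution is dominated by $M_i h(x)$ with no $z$-loss. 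Carrying this out coordinate by coordinate and using Fubini peels off $M_1,\dots,M_d$ in turn and produces the single power claimed. Raising the resulting inequality to the power $a$ restores the factor $\prod_i(1+|z_i|)^a$, which cancels the Peetre weight, and taking the supremum over $z$ yields the second inequality with a constant $c_2$ depending only on $\Omega$.
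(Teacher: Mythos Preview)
The paper does not give a proof of this lemma; it is stated as a known background result with a blanket reference to \cite{Vyb06} and \cite{SchTr87}. So there is no ``paper's own proof'' to compare against, and your write-up already goes further than the paper. Your argument for the first inequality is clean and correct: the reproducing formula $f=K*f$ with a tensor Schwartz kernel, differentiation under the convolution, and the submultiplicativity $(1+|z_i+u_i|)\le(1+|z_i|)(1+|u_i|)$ produce exactly the constant $c_1=\int_{\R^d}|D^{\bgamma}K(u)|\prod_i(1+|u_i|)^a\,du$.

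There is, however, a genuine gap in the second inequality when $a>1$. Everything hinges on the sub-mean-value estimate
\[
|f(y)|^{1/a}\le c\int_{\R^d}\prod_i(1+|u_i|)^{-M}\,|f(y-u)|^{1/a}\,du.
\]
For $a\le 1$ this follows from Jensen applied to $f=K*f$, since $t\mapsto t^{1/a}$ is convex. For $a>1$ the exponent $1/a$ is below one, the map $t\mapsto t^{1/a}$ is concave, and Jensen fails. Your justification --- ``the standard band-limited estimate obtained from the $a$-trick together with the maximal inequalities'' --- is at best a bare citation and at worst circular, because the ``$a$-trick'' in this context usually \emph{is} the Peetre maximal bound you are trying to establish. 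The case $r=1/a<1$ is precisely the nontrivial Plancherel--P\'olya/Nikol'skii step; its standard proof bootstraps off the \emph{first} inequality of the lemma (the gradient control) via a mean-value argument, and it needs either a proof or a sharp reference here. Once that step is secured, your ``decisive'' near/far split at $|w_i|=2(1+|z_i|)$ is correct, though heavier than necessary: the usual route uses the local form $|f(y)|^{1/a}\le C\int_{\prod_i[y_i-1,y_i+1]}|f|^{1/a}$, after which the single inclusion $\prod_i[x_i-z_i-1,x_i-z_i+1]\subset\prod_i[x_i-(1+|z_i|),x_i+(1+|z_i|)]$ peels off the $M_i$'s and the single factor $(1+|z_i|)$ in one line, with no splitting.
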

We finally give a vector-valued version of the Peetre maximal inequality which is a direct consequence of Lemma \ref{maxunglem} together
with Theorem \ref{bagby}.
\begin{theorem}\label{peetremax} Let $0<p<\infty$, $0<q\leq \infty$ and $a>\max\{1/p,1/q\}$. Let further
            $\bar{b}^{\ell} = (b^{\ell}_1,...,b^{\ell}_d)>0$ for $\ell \in I$ and $\Omega = \{\Omega_{\ell}\}_{\ell\in I}$,
            such that
            $$
                       \Omega_{\ell} \subset [-b_1^{\ell},b_1^{\ell}]\times\cdots \times [-b_d^{\ell},b_d^{\ell}]
            $$
            is compact for $\ell\in I$. Then there is a constant $C>0$ (independent of $f$ and $\Omega$) such that
            $$
               \Big\|\Big(\sum\limits_{\ell \in I} |P_{\bar{b}^{\ell},a}f_{\ell}|^q\Big)^{1/q}\Big\|_p \leq
C
	       \Big\|\Big(\sum\limits_{\ell \in I} |f_{\ell}|^q\Big)^{1/q}\Big\|_p
            $$
            holds for all systems $f = \{f_{\ell}\}_{\ell\in I}$ with $\supp(\cf f_{\ell}) \subset \Omega_{\ell}$, $\ell
\in I$\,.
 \end{theorem}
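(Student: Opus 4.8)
The plan is to reduce the general weight vector $\bar b^{\ell}$ to the isotropic weight $(1,\ldots,1)$ by an anisotropic dilation, thereby bringing each frequency support $\Omega_{\ell}$ into the \emph{fixed} box $[-1,1]^d$, and then to invoke Lemma~\ref{maxunglem} with this single compact set so that its constants $c_1,c_2$ become uniform in $\ell$. Concretely, for each $\ell$ I set $g_{\ell}(y):=f_{\ell}(y_1/b_1^{\ell},\ldots,y_d/b_d^{\ell})$. Since $\supp(\cf f_{\ell})\subset\Omega_{\ell}\subset[-b_1^{\ell},b_1^{\ell}]\times\cdots\times[-b_d^{\ell},b_d^{\ell}]$, the dilation rule for the Fourier transform gives $\supp(\cf g_{\ell})\subset[-1,1]^d$. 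A direct substitution in the definition~\eqref{petfefste} then yields the exact scaling identity
$$
P_{\bar b^{\ell},a}f_{\ell}(u)=P_{(1,\ldots,1),a}g_{\ell}(b_1^{\ell}u_1,\ldots,b_d^{\ell}u_d)\,.
$$

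Next, I apply Lemma~\ref{maxunglem} (with $\Omega=[-1,1]^d$ and $\bgamma=0$) to $g_{\ell}$, obtaining the pointwise bound $P_{(1,\ldots,1),a}g_{\ell}\le c_2\big(M_{[d]}|g_{\ell}|^{1/a}\big)^{a}$ with $c_2$ independent of $\ell$. The crucial observation is that each directional operator $M_i$ commutes with one-dimensional dilations: if $h(y)=H(y_1/b_1,\ldots,y_d/b_d)$ then $M_ih(y)=(M_iH)(y_1/b_1,\ldots,y_d/b_d)$, as a change of variables in the defining average shows. Iterating over $i\in[d]$ and using $|g_{\ell}|^{1/a}(y)=\big(|f_{\ell}|^{1/a}\big)(y_1/b_1^{\ell},\ldots,y_d/b_d^{\ell})$, the leftover dilation cancels exactly when evaluated at $y=(b_1^{\ell}u_1,\ldots,b_d^{\ell}u_d)$, leaving the clean, scale-free estimate
$$
P_{\bar b^{\ell},a}f_{\ell}(u)\le c_2\big(M_{[d]}|f_{\ell}|^{1/a}(u)\big)^{a}\,.
$$

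It remains to feed this into the mixed norm. Writing $F_{\ell}:=|f_{\ell}|^{1/a}$ and pulling the exponent $a$ out of both the inner sum and the outer $L_p$-norm, I rewrite
$$
\Big\|\Big(\sum_{\ell\in I}|P_{\bar b^{\ell},a}f_{\ell}|^q\Big)^{1/q}\Big\|_p\le c_2\Big\|\Big(\sum_{\ell\in I}\big(M_{[d]}F_{\ell}\big)^{aq}\Big)^{1/(aq)}\Big\|_{ap}^{a}\,.
$$
The right-hand side is precisely the $L_{ap}(\ell^{aq})$-norm of the iterated maximal operator $M_{[d]}$, so I apply Theorem~\ref{bagby} once in each coordinate direction. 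The hypothesis $a>\max\{1/p,1/q\}$ guarantees $ap>1$ and $aq>1$, while $p<\infty$ gives $ap<\infty$ and the case $q=\infty$ is compatible with $aq\le\infty$; these are exactly the admissibility conditions of Theorem~\ref{bagby}. Undoing the substitution via $F_{\ell}^{aq}=|f_{\ell}|^{q}$ then produces the claim with $C=c_2(C')^{a}$, where $C'$ is the constant from the iterated directional maximal inequality.

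The only genuine subtlety — and the step I would watch most carefully — is the bookkeeping of the two dilations: one must verify that the anisotropic scaling built into $P_{\bar b^{\ell},a}$ is undone \emph{exactly} by the commutation of the $M_i$ with dilations, so that no $\ell$-dependent factor survives in the pointwise bound and so that the constant $c_2$ from Lemma~\ref{maxunglem} is truly uniform (which hinges on every rescaled support lying in the single box $[-1,1]^d$). Everything beyond that is the standard $L_p(\ell^q)\to L_{ap}(\ell^{aq})$ rescaling trick combined with Theorem~\ref{bagby}.
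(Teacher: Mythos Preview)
Your argument is correct and is precisely the route the paper has in mind: the text states that Theorem~\ref{peetremax} is ``a direct consequence of Lemma~\ref{maxunglem} together with Theorem~\ref{bagby}'', and your proposal spells out exactly this reduction --- the anisotropic dilation to push every $\supp(\cf g_\ell)$ into the fixed cube $[-1,1]^d$ (making the constant in Lemma~\ref{maxunglem} uniform in $\ell$), the commutation of $M_i$ with one-dimensional dilations, and the passage to $L_{ap}(\ell_{aq})$ where the iterated directional maximal inequality applies. Nothing is missing.
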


\section{Hyperbolic Littlewood-Paley analysis}
\label{section3}

Let $\theta_0 \in \mathcal{S}(\mathbb{R})$ be supported on $[-2,2]$ with $\theta_0 =1$ on $[-1,1]$. For any $j \in
\mathbb{N}$, let us further define
$$
\theta_j = \theta_0(2^{-j} \cdot) - \theta_0(2^{-(j-1)} \cdot)
$$
such that $(\theta_{j})_{j}$ is a univariate resolution of unity, i.e.,
$\sum_{j \ge 0} \theta_j(\cdot) =1$. Observe that, for any $j \ge 1$,
$\mathrm{supp}(\theta_j) \subset \{ 2^{j-1} \le \vert \xi \vert \le 2^{j+1} \}$.

\begin{remark}
In the following, the function $\theta_0$ can be chosen with an arbitrary compact support. It does not change the main
results even if technical details of proofs and lemmas have to be adapted. This allows to choose $\theta_0$ as the Fourier transform of a
Meyer scaling function.
\end{remark}

Let us now come to the main concept of this paper, the hyperbolic Littlewood-Paley analysis.

\begin{definition}\label{def:hypLP} {\em (i)} For any $\bar{j} = (j_1, ..., j_d) \in \N_0^d$ and any
$\xi=(\xi_1,...,\xi_d) \in \mathbb{R}^d$ set
$$
\theta_{\bj}(\xi_1, ..., \xi_d) := \theta_{j_1}(\xi_1) \theta_{j_2}(\xi_2) ... \theta_{j_d}(\xi_d)\;.
$$
The function $\theta_{\bj}$ belongs to $\mathcal{S}(\mathbb{R}^d)$ for all $\bj \in \N_0^d$ and is compactly supported
on a dyadic rectangle. Further $\sum_{\bj \in \N_0^d}  \theta_{\bj} \equiv 1$ and
$(\theta_{\bj})_{\bj}$ is called a hyperbolic resolution of unity.\\
{\em (ii)} For $f \in \mathcal{S}'(\mathbb{R}^d)$ and $\bj \in \N_0^d$ set
$$
\Delta_{\bj} f :=  {\mathcal{F}}^{-1} (\theta_{\bj} {\mathcal{F}}f )\;.
$$
The sequence $(\Delta_{\bj} f)_{\bj\in \N_0^d}$ is called a hyperbolic Littlewood-Paley analysis of $f$.
\end{definition}

We are now in the position to introduce new functional spaces called {\it anisotropic hyperbolic Besov spaces} and {\it
anisotropic hyperbolic Triebel-Lizorkin spaces} defined {\it via} the hyperbolic Littlewood-Paley analysis.

\begin{definition}\label{defB} For $0<p \le \infty$, $0<q\le \infty$, $s\in \mathbb{R}$, and $\balpha=(\alpha_1,\ldots,\alpha_d)>0$ such that
$\sum_{i=1}^d
\alpha_i = d$ we define the anisotropic hyperbolic Besov space $\widetilde{B}^{s,\balpha}_{p,q}(\mathbb{R}^d)$ via
$$
\B^{s,\balpha}_{p,q}(\mathbb{R}^d) = \Big\{ f \in \mathcal{S}'(\mathbb{R}^d)~:~\Big( \sum_{\bj \in \N_0^d}
2^{\|(j_1/\alpha_1, ..., j_d/\alpha_d)\|_{\infty}sq} \Vert \Delta_{\bj} f \Vert_p^q \Big)^{1/q} < \infty
\Big\}\,,
$$
with the usual modification in case $q=\infty$. 

This definition does not depend on the chosen resolution of unity
$(\theta_{\bj})_{\bj}$ and the quantity
$$
\|f\|_{\B^{s,\balpha}_{p,q}}:= \Big( \sum_{\bj \in \N_0^d}  2^{\|(j_1/\alpha_1, ..., j_d/\alpha_d)\|_{\infty}sq}
\Vert \Delta_{\bj} f \Vert_p^q \Big)^{1/q}
$$
is a norm (resp. quasi-norm) on $\B^{s,\balpha}_{p,q}(\mathbb{R}^d)$ for $1 \leq p, \, q \leq \infty$
(resp. $0< \min\{p,q\} <1$) and with usual modification if $q=\infty$.
\end{definition}

\begin{definition}\label{defHTLS} For $0<p<\infty$, $0<q \leq \infty$, $s\in \R$, $\balpha=(\alpha_1,\ldots,\alpha_d)>0$ such that $\sum_i \alpha_i
= d$ we define the anisotropic hyperbolic Triebel-Lizorkin space via
$$
\F^{s,\balpha}_{p,q}(\mathbb{R}^d) = \Big\{f \in \mathcal{S}'(\mathbb{R}^d)~:~ \Big\| \Big(
\sum_{\bj \in \N_0^d}  2^{\|(j_1/\alpha_1,..., j_d/\alpha_d)\|_{\infty} sq} \vert
\Delta_{\bj} f(\cdot) \vert^q \Big)^{1/q} \Big\|_p <\infty \Big\}\,,
$$
with the usual modification in case $q=\infty$. 

This definition does not depend on the chosen resolution of unity
$(\theta_{\bj})_{\bj}$ and the quantity
$$
\|f\|_{\F^{s,\balpha}_{p,q}} := \Big\| \Big(
\sum_{\bj \in \N_0^d}  2^{\|(j_1/\alpha_1,..., j_d/\alpha_d)\|_{\infty} sq} \vert
\Delta_{\bj} f(\cdot) \vert^q \Big)^{1/q} \Big\|_p
$$
is a norm on $\F^{s,\balpha}_{p,q}(\mathbb{R}^d)$ for $1 \leq p <\infty$, $1 \le q \leq \infty$
(resp.\ quasi-norm for $0< \min \{p,q\} <1$).
\end{definition}

\begin{remark} The above definitions of anisotropic hyperbolic Besov and Sobolev spaces
include four indices: $s$ stands for the regularity, $p$ is the integration parameter and $q$ the
so-called fine-index. The parameter $\balpha=(\alpha_1,\ldots,\alpha_d)$ encodes the present anisotropy: the more $\alpha_{\text{min}}=\min\{\alpha_1,..,\alpha_d\}$
is close to $0$ and $\alpha_{\text{max}}=\max\{\alpha_1,...,\alpha_d\}$ is close to $d$, the more we need directional smoothness in one axis
compared to others. On the other hand, if $\balpha=(1,...,1)$ the anisotropy becomes an ``isotropy''.
\end{remark}

\begin{remark} By analogy with the classical spaces, if $q=2$ and $1<p<\infty$, $\F^{s,\balpha}_{p,q}(\R^d)$ is
called {\it anisotropic hyperbolic Sobolev space} and is denoted by $\widetilde{W}^{s,\balpha}_p({\mathbb{R}}^d)$.
In case $\balpha = (1,...,1)$ we write $\widetilde{W}^{s}_p({\mathbb{R}}^d)$.
\end{remark}

Let us finally introduce classical spaces with dominating mixed smoothness in the spirit of \cite{SchTr87,Vyb06}.

\begin{definition}\label{def_mixed} Let $r \in \R$, $0<p,q\leq \infty$ ($p<\infty$ in the $F$-case).\\
{\em (i)} The Besov space with dominating mixed smoothness $S^r_{p,q}B(\R^d)$ is the collection of all distributions
$f\in \mathcal{S}'(\R^d)$ such that the following (quasi-)norm
$$
    \|f\|_{S^r_{p,q}B(\R^d)}:=\Big(\sum\limits_{\bj\in \N_0^d} 2^{r\|j\|_1q}\|\Delta_{\bj}(f)\|_p^q\Big)^{1/q} \quad\text{is finite.}
$$
{\em (ii)} The Triebel-Lizorkin space with dominating mixed smoothness $S^r_{p,q}F(\R^d)$ is the collection of all
distributions $f\in \mathcal{S}'(\R^d)$ such that the following (quasi-)norm
$$
    \|f\|_{S^r_{p,q}F(\R^d)}:=\Big\|\Big(\sum\limits_{\bj\in \N_0^d}
2^{r\|j\|_1q}|\Delta_{\bj}(f)(x)|^q\Big)^{1/q}\Big\|_p \quad\text{is finite.}
$$
{\em (iii)} If $1<p<\infty$ and $r\in \R$ then the Sobolev space with dominating mixed smoothness $S^r_{p}W(\R^d)$ is
the collection of all $f\in \mathcal{S}'(\R^d)$ such that
$$
    \|f\|_{S^r_{p}W} :=\Big\|\cf^{-1}\Big[\prod\limits_{i=1}^d (1+|\xi_i|^2)^{r/2}\cf f\Big]\Big\|_p \quad\text{is finite.}
$$
\end{definition}

Let us also state a useful Fourier multiplier theorem, see~\cite[Thm.\ 1.12]{Vyb06} or~\cite[p.\ 77]{SchTr87}.

\begin{lemma}[\cite{Vyb06,SchTr87}] \label{Fouriermultiplier} Let $0<p<\infty$, $0 < q \le \infty$, and $r > \frac{1}{\min(p,q)} +
\frac{1}{2}$. Further, let $\{ \Omega_{\bar{k}} \}_{\bar{k} \in {\mathbb{N}}_0 ^d}$ be a sequence of compact subsets of
${\mathbb{R}}^d$ such that
$$
\Omega_{\bk} \subset \big\{ x \in {\mathbb{R}}^d \,:\, \vert x_i \vert \le 2^{k_i}, \, i = 1,\ldots,d \big\}.
$$
Then, there is a positive constant $C>0$ such that
$$
\Big\| \Big( \sum_{\bar{k} \in {\mathbb{N}}_0 ^d} \Big\vert {\mathcal{F}}^{-1} [\rho_{\bar{k}} {\mathcal{F}}
f_{\bk} ](\cdot) \Big\vert^q \Big)^{\frac{1}{q}} \Big\|_p \le C \Big\Vert \Big( \sum_{\bar{k} \in
{\mathbb{N}}_0 ^d} \vert f_{\bk} (\cdot) \vert^q \Big)^{\frac{1}{q}} \Big\Vert_p \times \sup_{\bl \in {\mathbb{N}}_0^d}
\Vert \rho_{\bl} (2^{\ell_1} \cdot, \ldots, 2^{\ell_d} \cdot )\Vert_{S^{r}_2 W}
$$
holds for all systems $\{f_{\bar{k}}\}_{\bar{k} \in {\mathbb{N}}_0^d} \in L_p(\ell_q)$ with $\supp(\mathcal{F}f_{\bar{k}}) \subset \Omega_{\bar{k}}$ and all systems $\{\rho_{\bar{k}} \}_{\bar{k} \in {\mathbb{N}}_0^d} \subset
S^{r}_2W(\mathbb{R}^d)$.
\end{lemma}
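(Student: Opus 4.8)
\emph{Plan.} The strategy is to reduce the vector-valued estimate to a single pointwise bound for each fixed multi-index $\bk$, and then feed this bound into the vector-valued Peetre maximal inequality of Theorem~\ref{peetremax}. Concretely, I would first fix an auxiliary parameter $a$ with
\[
\max\{1/p,1/q\} < a < r - \tfrac12 ,
\]
which is possible precisely because the hypothesis reads $r > \tfrac{1}{\min(p,q)}+\tfrac12 = \max\{1/p,1/q\}+\tfrac12$. The lower bound $a>\max\{1/p,1/q\}$ is exactly the hypothesis of Theorem~\ref{peetremax}, while the upper bound $a<r-\tfrac12$ is needed for an integrability argument below. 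The fact that these two constraints on $a$ can be met simultaneously is the reason the threshold $r>\tfrac{1}{\min(p,q)}+\tfrac12$ appears.

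\emph{Pointwise estimate.} Writing the Fourier multiplier as a convolution, $\cf^{-1}[\rho_{\bk}\cf f_{\bk}] = c\,(\cf^{-1}\rho_{\bk})\ast f_{\bk}$, and inserting the Peetre maximal function $P_{\bar b,a}f_{\bk}$ from~\eqref{petfefste} with the weights $\bar b=(2^{k_1},\dots,2^{k_d})$, one obtains
\[
\big|\cf^{-1}[\rho_{\bk}\cf f_{\bk}](x)\big| \le c\, P_{\bar b,a}f_{\bk}(x)\int_{\R^d}|\cf^{-1}\rho_{\bk}(z)|\prod_{i=1}^d(1+2^{k_i}|z_i|)^a\,dz .
\]
I would then bound the remaining integral $I_{\bk}$ by Cauchy--Schwarz, splitting the weight as $\prod_i(1+2^{k_i}|z_i|)^a = \prod_i(1+2^{k_i}|z_i|)^{r}\cdot\prod_i(1+2^{k_i}|z_i|)^{a-r}$. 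The factor carrying the negative exponent integrates to $C\prod_i 2^{-k_i}$ after the substitution $u_i=2^{k_i}z_i$, the integral converging exactly because $2(r-a)>1$. The factor carrying $(\cdot)^{r}$, using $(1+t)^{2r}\asymp(1+t^2)^r$, the same substitution, and Plancherel, turns into $\prod_i 2^{k_i}\cdot\|\rho_{\bk}(2^{k_1}\cdot,\dots,2^{k_d}\cdot)\|_{S^r_2W}^2$, via the Bessel-potential description of $S^r_2W$ in Definition~\ref{def_mixed}(iii). Multiplying the two square roots, the powers $\prod_i 2^{\pm k_i/2}$ cancel and one reaches the \emph{scale-invariant} pointwise bound
\[
\big|\cf^{-1}[\rho_{\bk}\cf f_{\bk}](x)\big| \le C\,\big\|\rho_{\bk}(2^{k_1}\cdot,\dots,2^{k_d}\cdot)\big\|_{S^r_2W}\; P_{\bar b,a}f_{\bk}(x).
\]

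\emph{Conclusion.} Taking the $\ell_q(\bk)$-norm followed by the $L_p$-norm, estimating each multiplier norm by $\sup_{\bl}\|\rho_{\bl}(2^{\ell_1}\cdot,\dots,2^{\ell_d}\cdot)\|_{S^r_2W}$ and pulling this supremum out, leaves $\big\|(\sum_{\bk}|P_{\bar b,a}f_{\bk}|^q)^{1/q}\big\|_p$. Since $\supp(\cf f_{\bk})\subset[-2^{k_1},2^{k_1}]\times\dots\times[-2^{k_d},2^{k_d}]$ and $a>\max\{1/p,1/q\}$, Theorem~\ref{peetremax} applied with these sets $\Omega_{\bk}$ and vectors $\bar b^{\bk}=(2^{k_1},\dots,2^{k_d})$ controls this by $C\big\|(\sum_{\bk}|f_{\bk}|^q)^{1/q}\big\|_p$, which is the claim; the case $q=\infty$ follows verbatim with the sums over $\bk$ replaced by suprema. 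I expect the main obstacle to be the bookkeeping in the pointwise step: verifying that the $2^{k_i}$ weights introduced by the Peetre localization, by the change of variables, and by Plancherel cancel exactly, so that only the normalized multiplier norm $\|\rho_{\bk}(2^{k_1}\cdot,\dots,2^{k_d}\cdot)\|_{S^r_2W}$ survives. Everything beyond this is the routine application of the maximal inequality.
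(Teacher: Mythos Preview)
The paper does not supply its own proof of this lemma; it is quoted directly from the references \cite{Vyb06,SchTr87}. Your argument is correct and is in fact precisely the standard proof given in those sources: one writes the multiplier as a convolution, inserts the Peetre weight $\prod_i(1+2^{k_i}|z_i|)^a$ with $\max\{1/p,1/q\}<a<r-\tfrac12$, applies Cauchy--Schwarz to obtain the scale-invariant pointwise bound by $\|\rho_{\bk}(2^{k_1}\cdot,\dots,2^{k_d}\cdot)\|_{S^r_2W}\,P_{2^{\bk},a}f_{\bk}(x)$, and then invokes Theorem~\ref{peetremax}. The bookkeeping you flag as the potential obstacle works out exactly as you describe; the powers $\prod_i 2^{\pm k_i/2}$ indeed cancel.
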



\section{Hyperbolic wavelet analysis}

In this section we prove hyperbolic wavelet characterizations of the spaces $\B^{s,\balpha}_{p,q}(\mathbb{R}^d)$ and $\F^{s,\balpha}_{p,q}(\mathbb{R}^d)$ defined in Definitions \ref{defB} and
\ref{defHTLS}, respectively. It should be noted that the proof technique used for Theorem \ref{main_wav} below also represents a progress towards new optimal
wavelet characterizations of Besov-Lizorkin-Triebel spaces with dominating mixed smoothness, which extends the results
in \cite[Sect.\ 2.4]{Vyb06} significantly, see Remark \ref{dom_mixed} below.

Let us start with univariate wavelets given
by a scaling function $\psi_0$ and a corresponding wavelet
$\psi$. These functions are supposed to satisfy the following (minimal) conditions:
\begin{itemize}
 \item[(K)] It holds $\psi_0,\psi \in C^K(\R)$. For any $M\in \N$ there is a constant $C_M>0$ such that  for all
$0\leq \alpha \leq K$ it holds
 $$
    |D^{\alpha}\psi_0(x)| + |D^{\alpha}\psi(x)| \leq C_M(1+|x|)^{-M}\quad,\quad x\in \R\,.
 $$
 \item[(L)] The wavelet $\psi$ has vanishing moments up to order $L-1$: For $L>\beta\in \N_0$ it holds
 $$
    \int_{\R} \psi(x)x^{\beta}\,dx = 0\,.
 $$
 In case $L=0$ the condition is void.
\end{itemize}
We shall denote
$$
    \psi_{j,k}:=\frac{1}{\sqrt{2}}\psi(2^{j-1}\cdot-k)\quad,\, j\in \N,\, k\in \mathbb{Z}\,,
$$
and $\psi_{0,k}:=\psi_0(\cdot-k)$. We set $\psi_{j,k} \equiv 0$ if $j<0$. To obtain the hyperbolic wavelet basis
in $L_2(\R^d)$ we tensorize over all scales and obtain
$$
    \psi_{\bj,\bk}(x_1,...,x_d):=\psi_{j_1,k_1}(x_1)\cdot...\cdot \psi_{j_d,k_d}(x_d)\quad,\quad
x=(x_1,...,x_d)\in\R^d,\, \bj\in\Z^d,\, \bk\in \Z^d\,.
$$
The following lemma recalls a useful convolution relation. Let us clarify the notation first. For a given
univariate function $\Lambda$ we will use the notation $\Lambda_j(\cdot):=2^{j-1}\Lambda(2^{j-1}\cdot)$, $j\in \N$\,.
We will further put $x_{j,m}:=2^{-j}m$ and $I_{j,m} := [2^{-j}m, 2^{-j}(m+1))$ with associated characteristic function $\chi_{j,m}:=\bbone_{I_{j,m}}$.

\begin{lemma}\label{conv} Let $\Lambda_0, \Lambda \in \mathcal{S}(\R)$ with $\Lambda$ having infinitely many vanishing
moments, i.e.,
$$
    \int_{\R} \Lambda(x)x^{\beta}\,dx = 0
$$
for all $\beta \in \N$\,. Let further $\psi_0$ and $\psi$ satisfy $(K)$ and $(L)$ as above and $R>0$ be a given real
number. Then it exists a constant $C_R>0$ such that for any $j\in \N_0$ and $\ell,m\in \Z$ the convolution
relation
$$
    |(\Lambda_j \ast \psi_{j+\ell,m})(x)| \leq C_R
2^{-N_{\sign(\ell)}|\ell|}(1+2^{\min\{j,j+\ell\}}|x-x_{j+\ell,m}|)^{-R}
$$
holds true with $\sign(\ell)\in\{+,-,0\}$ and $N_0:=0$, $N_+:= L+1$ and $N_-:=K$.
\end{lemma}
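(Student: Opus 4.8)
The plan is to reduce everything to a single centred building block and then read off the $|\ell|$-decay from the moment/smoothness conditions by a Taylor argument. First I would use the translation covariance of the convolution, writing $(\Lambda_j\ast\psi_{j+\ell,m})(x)=(\Lambda_j\ast\psi_{j+\ell,0})(x-\tau)$ with $\tau$ the shift carrying $\psi_{j+\ell,0}$ onto $\psi_{j+\ell,m}$; this moves the claimed localization centre $x_{j+\ell,m}$ to the origin, so it suffices to bound $(\Lambda_j\ast\psi_{j+\ell,0})(x)$ by $C_R\,2^{-N_{\sign(\ell)}|\ell|}(1+2^{\min\{j,j+\ell\}}|x|)^{-R}$. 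The estimate then splits into the three regimes $\ell=0$, $\ell>0$, $\ell<0$, according to which factor sits at the finer dyadic scale. The case $\ell=0$ (where $N_0=0$, so no gain is asserted) is the elementary fact that the convolution of two bumps of the common scale $2^{-j}$, each with fast decay, is again a bump of that scale: I would bound $\int|\Lambda_j(x-y)|\,|\psi_{j,0}(y)|\,dy$ using that $\Lambda\in\S(\R)$ and that $\psi$ obeys the decay in (K) with exponent $M\gg R$, splitting the $y$-integral at $|y|=\tfrac12|x|$ to produce $(1+2^{j}|x|)^{-R}$.

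For $\ell>0$ the wavelet $\psi_{j+\ell,0}$ is the finer object, so I would exploit its $L$ vanishing moments from (L). Subtracting from $\Lambda_j(x-\cdot)$ its degree-$(L-1)$ Taylor polynomial about the origin (which integrates to zero against $\psi_{j+\ell,0}$) reduces matters to the Taylor remainder, controlled by $|y|^{L}\sup|\Lambda_j^{(L)}|$. The crucial point is the normalization bookkeeping: since $\Lambda_j$ carries the $L_1$-type factor $2^{j-1}$, its $L$-th derivative is of size $2^{(j-1)(L+1)}$, whereas $\int|y|^{L}|\psi_{j+\ell,0}(y)|\,dy\asymp 2^{-(j+\ell-1)(L+1)}$ (the extra power coming from the $O(\text{scale})$ mass of the $L^\infty$-normalized $\psi$); their product is exactly $2^{-(L+1)\ell}$ up to a constant, giving $N_+=L+1$. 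To recover the spatial factor $(1+2^{j}|x|)^{-R}$ simultaneously, I would again decompose into the near region $|y|\le\tfrac12|x|$, where the remainder bound and the decay of $\Lambda^{(L)}$ apply, and the far region, where the moment cancellation is discarded and both factors are estimated directly by their decay. The endpoint $j=0$ is subsumed here and uses only the smoothness of $\Lambda_0$, not any moments.

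For $\ell<0$ the roles reverse: $\Lambda_j$ is the finer factor (forcing $j\ge1$), and I would use that $\Lambda$ has vanishing moments together with $\psi\in C^K$. Expanding $\psi_{j+\ell,0}(x-\cdot)$ in a Taylor polynomial of degree $K-1$ about the origin and annihilating it against $\Lambda_j$ by the vanishing moments of $\Lambda$ of orders $0\le\beta\le K-1$, the remainder involves $\psi^{(K)}$; here the expansion can be pushed only to order $K$, which is exactly the origin of the exponent $N_-=K$. The scaling now gives $\sup|\psi_{j+\ell,0}^{(K)}|\asymp 2^{(j+\ell-1)K}$ and $\int|y|^{K}|\Lambda_j(y)|\,dy\asymp 2^{-(j-1)K}$ (no extra power this time, since $\Lambda_j$ has $O(1)$ mass), with product $2^{-K|\ell|}$. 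The same near/far splitting delivers the localization at the coarser scale $2^{-(j+\ell)}$, i.e. $(1+2^{j+\ell}|x|)^{-R}$.

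I expect the principal obstacle to be the \emph{simultaneous} extraction of the sharp $|\ell|$-decay and the polynomial spatial localization: a bare Taylor-remainder bound is only useful where the integration variable is close to the centre, so the near/far decomposition has to be carried out with enough care that the moment gain survives in the near zone while the tail in the far zone does not destroy the prefactor $2^{-N|\ell|}$. The second delicate point is the normalization accounting that distinguishes the two directions — tracking which of $\Lambda_j$, $\psi_{j+\ell,0}$ is $L_1$- versus $L^\infty$-normalized is precisely what produces the asymmetric exponents $L+1$ and $K$ rather than $L$ and $K-1$. The target decay order $R$ is itself harmless, since $\psi$ decays faster than any polynomial and $\Lambda$ is Schwartz, so $M$ may always be chosen large relative to $R$.
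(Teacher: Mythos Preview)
Your outline is correct and is precisely the standard almost-orthogonality argument: reduce by translation to a centred pair, then in each regime $\ell>0$ resp.\ $\ell<0$ expand the \emph{coarser} factor in a Taylor polynomial and kill it against the vanishing moments of the \emph{finer} one, with the near/far splitting supplying the spatial weight $(1+2^{\min\{j,j+\ell\}}|x|)^{-R}$; the asymmetry $N_+=L+1$ versus $N_-=K$ is exactly the normalization bookkeeping you identify (the $L_1$-scaled $\Lambda_j$ contributes an extra dilation power that the $L_\infty$-scaled $\psi_{j+\ell,0}$ does not).

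The paper itself does not give a self-contained proof here: it merely records that the estimate is a special case of the Frazier--Jawerth almost-orthogonality lemmas (Lemma~3.3 in their 1986 paper, Lemmas~B.1--B.2 in the 1990 paper) and refers to Grafakos for a textbook version. Those references prove the inequality by exactly the Taylor-remainder mechanism you sketch, so your approach is not different in substance---it is the argument behind the citation, written out.

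One small point to watch: in the $\ell<0$ branch you use the vanishing of the moments of $\Lambda$ of orders $0,\dots,K-1$, but the hypothesis as stated only asserts $\int\Lambda(x)x^{\beta}\,dx=0$ for $\beta\in\N=\{1,2,\dots\}$, omitting $\beta=0$. This is harmless in the intended application, since there $\Lambda=\cf^{-1}\theta$ with $\theta$ supported away from the origin, so the zeroth moment vanishes as well; treat it as a minor imprecision in the statement rather than a gap in your argument.
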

\begin{proof} The above lemma is a special case of a more general convolution relation, see for instance \cite[p.~466]{Gr08} for the most general version. Originally, this relation is due to Frazier, Jawerth \cite[Lem.\
3.3]{FrJa86}, \cite[Lem.\ B.1, B.2]{FrJa90}.
 \end{proof}

Lemma \ref{conv} immediately implies the following multivariate version by exploiting the tensor product
structure. Similar as for the hyperbolic wavelet system, we use the notation
$$
    \Lambda_{\bj}(x):=\Lambda_{j_1}(x_1)\cdot ... \cdot \Lambda_{j_d}(x_d)\quad,\quad x\in \R^d,\, \bj \in \Z^d\,.
$$
In the sequel we will further need the notation 
\begin{align}\label{dyQubes}
Q_{\bj,\bm} := I_{j_1,m_1} \times \ldots \times
I_{j_d,m_d} \quad\text{and}\quad \chi_{\bj,\bm}(x_1,...,x_d):=\chi_{j_1,m_1}(x_1) \cdot ... \cdot \chi_{j_d,m_d}(x_d)\,,
\end{align}
with the notation $I_{j_i,m_i}$ and $\chi_{j_i,m_i}$, $i\in[d]=\{1,\ldots,d\}$, introduced right before
Lemma~\ref{conv}.

\begin{lemma}\label{conv2} Let $\Lambda, \Lambda_0, \psi_0, \psi$ as in Lemma \ref{conv}. For any $R>0$ there exists a
contant $C_R>0$ such that for any $\bj\in \N_0^d$ and $\bl,\bm\in\Z^d$ the convolution
relation
\begin{equation}\label{conv3}
   |(\Lambda_{\bj}\ast \psi_{\bj+\bl,\bm})(x)| \leq C_R \prod\limits_{i=1}^d
2^{-N_{\sign(\ell_i)}|\ell_i|}(1+2^{\min\{j_i,j_i+\ell_i\}}|x_i-2^{-(j_i+\ell_i)}m_i|)^{-R}
\end{equation}
holds true with $\sign(\ell_i)\in\{+,-,0\}$ and $N_0:=0$, $N_+:=L+1$ and $N_-:=K$\,.
\end{lemma}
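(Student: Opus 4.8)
The plan is to reduce the claim to $d$ separate applications of the univariate convolution relation of Lemma~\ref{conv}, exploiting that both $\Lambda_{\bj}$ and $\psi_{\bj+\bl,\bm}$ are pure tensor products. First I would record the factorization of the $d$-dimensional convolution. Writing out the defining integral and appealing to Fubini's theorem, one obtains
\begin{equation*}
   (\Lambda_{\bj}\ast \psi_{\bj+\bl,\bm})(x) = \prod_{i=1}^d (\Lambda_{j_i}\ast \psi_{j_i+\ell_i,m_i})(x_i)\,,
\end{equation*}
since the integrand $\prod_{i=1}^d \Lambda_{j_i}(x_i-y_i)\psi_{j_i+\ell_i,m_i}(y_i)$ splits into a product of functions, each depending on a single integration variable $y_i$. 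Fubini is justified because each univariate factor $\Lambda_{j_i}$ lies in $\mathcal{S}(\R)$ and each $\psi_{j_i+\ell_i,m_i}$ decays rapidly by condition $(K)$, so the full integrand is absolutely integrable over $\R^d$.

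Second, I would apply the univariate Lemma~\ref{conv} to each of the $d$ factors. For the $i$-th coordinate this yields
\begin{equation*}
   |(\Lambda_{j_i}\ast \psi_{j_i+\ell_i,m_i})(x_i)| \leq C_R\, 2^{-N_{\sign(\ell_i)}|\ell_i|}\big(1+2^{\min\{j_i,j_i+\ell_i\}}|x_i-2^{-(j_i+\ell_i)}m_i|\big)^{-R}\,,
\end{equation*}
where I have used that $x_{j_i+\ell_i,m_i}=2^{-(j_i+\ell_i)}m_i$ and that the constant $C_R$ from Lemma~\ref{conv} depends only on $R$ (and on the fixed univariate data $\Lambda_0,\Lambda,\psi_0,\psi$), so in particular it is the same for all coordinates.

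Finally, multiplying these $d$ coordinate-wise estimates and substituting into the factorization identity reproduces exactly the asserted bound~\eqref{conv3}, with the constant $C_R$ of the univariate lemma replaced by $C_R^d$, which we simply rename $C_R$. There is no genuine obstacle here: the entire content is the tensorization identity together with the decay already supplied by Lemma~\ref{conv}. The only point deserving (minor) care is the justification of Fubini's theorem, i.e.\ the absolute integrability of the separated integrand, which is immediate from the Schwartz decay of $\Lambda$ and the rapid decay of $\psi$ imposed by $(K)$.
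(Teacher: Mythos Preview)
Your proof is correct and follows precisely the approach indicated in the paper, which simply remarks that Lemma~\ref{conv} immediately implies the multivariate version by exploiting the tensor product structure. Your explicit factorization of the convolution via Fubini and coordinate-wise application of Lemma~\ref{conv} is exactly what that remark unpacks.
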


The next proposition is also crucial and represents the ``hyperbolic version''  of \cite[Lem.\ 3,7]{Ke10}. An isotropic version is
originally due to Kyriazis \cite[Lem.\ 7.1]{Ky03}. For the convenience of the reader we give a proof.

\begin{proposition}\label{HL} Let $0<r\leq 1$ and $R>1/r$. For any sequence $(\lambda_{\bj})_{\bj\in \N_0^d}$ of complex
numbers and any $\bl\in \Z^d, \bj \in \N_0^d$ we have, using the notation $\bl_+=((\ell_1)_{+},\ldots,(\ell_d)_{+})$,
\begin{equation}\nonumber
  \begin{split}
    &\sum\limits_{\bm \in \Z^d} |\lambda_{\bj+\bl,\bm}|\prod\limits_{i=1}^d
    \Big(1+2^{\min\{j_i,j_i+\ell_i\}}|x_i-2^{-(j_i+\ell_i)}m_i|\Big)^{-R} \\
    &~~~~~\lesssim 2^{\|\bl_+\|_1/r}\Big[M\Big|\sum_{\bm\in
    \Z^d}\lambda_{\bj+\bl,\bm}\chi_{\bj+\bl,\bm}\Big|^r\Big]^{1/r}(x)\quad,\, x \in \R^d\,,
  \end{split}
\end{equation}
where $M$ stands for the Hardy-Littlewood maximal operator.
\end{proposition}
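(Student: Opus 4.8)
The plan is to fix $\bk := \bj+\bl$ and reduce everything to a single dyadic--shell decomposition of the lattice $\Z^d$ around the point $x$. First I would record the elementary but crucial identity $\min\{j_i,\,j_i+\ell_i\}=k_i-(\ell_i)_+$, so that the $i$-th decay scale appearing in \eqref{conv3} is $2^{\,k_i-(\ell_i)_+}$, whereas the building blocks $\chi_{\bk,\bm}$ live on the cubes $Q_{\bk,\bm}$ of sidelength $2^{-k_i}\le 2^{-(k_i-(\ell_i)_+)}$ in direction $i$. The gap $(\ell_i)_+$ between these two scales is exactly what will produce the factor $2^{\|\bl_+\|_1/r}$. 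Writing $g:=\sum_{\bm}\lambda_{\bk,\bm}\chi_{\bk,\bm}$, so that $|g|^r=\sum_{\bm}|\lambda_{\bk,\bm}|^r\chi_{\bk,\bm}$ by disjointness of supports, the target becomes a pointwise bound of the left-hand side by $2^{\|\bl_+\|_1/r}[M(|g|^r)(x)]^{1/r}$.

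For the decomposition I would split $\Z^d$ according to the anisotropic distance of the centres $2^{-k_i}m_i$ from $x_i$: for $\bar{\mu}=(\mu_1,\ldots,\mu_d)\in\N_0^d$ let $A_{\bar\mu}(x)$ collect those $\bm$ with $1+2^{\,k_i-(\ell_i)_+}|x_i-2^{-k_i}m_i|\asymp 2^{\mu_i}$ in every coordinate. On such a shell the weight in \eqref{conv3} is $\lesssim 2^{-R\|\bar\mu\|_1}$, the number of indices is $\#A_{\bar\mu}(x)\lesssim\prod_i 2^{\mu_i+(\ell_i)_+}=2^{\|\bar\mu\|_1+\|\bl_+\|_1}$, and the union of the corresponding cubes is contained in an axis-parallel rectangle $R_{\bar\mu}(x)\ni x$ of sidelengths $\asymp 2^{\mu_i-k_i+(\ell_i)_+}$. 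Using $0<r\le 1$ (hence the embedding $\ell_r\hookrightarrow\ell_1$), the identity $\prod_i 2^{-k_i}\sum_{\bm\in A_{\bar\mu}(x)}|\lambda_{\bk,\bm}|^r=\int_{\bigcup Q_{\bk,\bm}}|g|^r$ together with $\bigcup_{\bm\in A_{\bar\mu}(x)}Q_{\bk,\bm}\subset R_{\bar\mu}(x)$ and $|R_{\bar\mu}(x)|\asymp\prod_i 2^{\mu_i-k_i+(\ell_i)_+}$, I would estimate
\[
\sum_{\bm\in A_{\bar\mu}(x)}|\lambda_{\bk,\bm}|\le\Big(\sum_{\bm\in A_{\bar\mu}(x)}|\lambda_{\bk,\bm}|^r\Big)^{1/r}\lesssim\Big(2^{\|\bar\mu\|_1+\|\bl_+\|_1}\,M(|g|^r)(x)\Big)^{1/r},
\]
where in the last step the rectangle average $|R_{\bar\mu}(x)|^{-1}\int_{R_{\bar\mu}(x)}|g|^r\le M(|g|^r)(x)$ is inserted.

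It then remains to sum the shell contributions. Each shell is bounded by
\[
2^{-R\|\bar\mu\|_1}\Big(2^{\|\bar\mu\|_1+\|\bl_+\|_1}\,M(|g|^r)(x)\Big)^{1/r}=2^{\|\bl_+\|_1/r}\,2^{(1/r-R)\|\bar\mu\|_1}\,[M(|g|^r)(x)]^{1/r},
\]
and since $R>1/r$ the geometric series $\sum_{\bar\mu\in\N_0^d}2^{(1/r-R)\|\bar\mu\|_1}$ converges coordinatewise, which yields the claim. The step I expect to require the most care is that, because the decay scales $2^{\,k_i-(\ell_i)_+}$ differ from direction to direction, the natural averaging sets $R_{\bar\mu}(x)$ are genuinely anisotropic rectangles rather than cubes; correspondingly the operator $M$ on the right-hand side must be read here with respect to axis-parallel rectangles, equivalently as the iterated one-dimensional operators $M_{[d]}=M_1\cdots M_d$ of Theorem~\ref{bagby}, which is precisely the form controlled by the maximal machinery used afterwards. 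One cannot circumvent this by iterating a one-dimensional version of the estimate, since a single coordinate summation would raise the remaining weights to the power $r$ and thereby destroy the admissibility $R>1/r$. Keeping the decomposition fully $d$-dimensional and invoking $\ell_r\hookrightarrow\ell_1$ only once, globally, is what makes the exponent $\|\bl_+\|_1/r$ come out cleanly.
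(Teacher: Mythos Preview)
Your approach is essentially the same as the paper's: both fix $x$, decompose $\Z^d$ into dyadic shells $\Omega_{\bk}(x)$ (your $A_{\bar\mu}(x)$) according to the weighted distance $2^{\min\{j_i,j_i+\ell_i\}}|x_i-2^{-(j_i+\ell_i)}m_i|$, apply the embedding $\ell_r\hookrightarrow\ell_1$ once to pass from $\sum|\lambda|$ to $(\sum|\lambda|^r)^{1/r}$, rewrite the latter as an integral of $|g|^r$ over the union of the cubes $Q_{\bj+\bl,\bm}$, and bound that by a maximal average over a rectangle containing $x$. The only cosmetic difference is that the paper absorbs the shell sum by first pulling out $2^{-\delta\|\bk\|_1}$ (with $\delta=R-1/r$) and passing to a supremum, whereas you sum the geometric series $\sum_{\bar\mu}2^{(1/r-R)\|\bar\mu\|_1}$ directly; both are equivalent.

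Your observation about the anisotropy of the averaging sets is well taken and in fact sharper than the paper's exposition. The set the paper calls $Q(x)$ has sidelengths $\asymp 2^{k_i-\min\{j_i,j_i+\ell_i\}}$, which vary with $i$, so it is a genuine axis-parallel \emph{rectangle} rather than a cube; moreover the statement ``$x\in Q(x)$'' is only literally true for the innermost shell and otherwise requires enlarging $Q(x)$ slightly. Hence the $M$ appearing on the right of the proposition should indeed be read as the strong maximal operator $M_{[d]}=M_1\cdots M_d$ (controlled by iterating Theorem~\ref{bagby}) rather than the cube-based operator from \eqref{maxfunc}. This makes no difference downstream, since in the proof of Theorem~\ref{main_wav} the vector-valued bound for $M_{[d]}$ on $L_{p/r}(\ell_{q/r})$ is available exactly under the same condition $r<\min\{p,q\}$.
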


\begin{proof} We follow the proof from~\cite[Lem.\ 7]{Ke10}. Put $\delta = R-1/r>0$
and
define a decomposition $\{\Omega_{\bk}(x)\}_{\bk\in \N_0^d}$ of $\Z^d$ depending on $x=(x_1,\ldots,x_d)\in\R^d$ as follows:
$$
  \Omega_{\bk}(x):=\Omega_{k_1}(x_1)\times...\times \Omega_{k_d}(x_d)\quad,\quad \bk=(k_1,\ldots,k_d) \in \N_0^d\,,
$$
with
\begin{equation}\nonumber
  \begin{split}
\Omega_{k_i}(x_i)&:=\{m\in \Z~:~2^{k_i-1}<2^{\min\{j_i,j_i+\ell_i\}}|x_i-2^{-(j_i+\ell_i)}m| \leq
2^{k_i}\}\quad,\quad k_i\in \N\,,\\
\Omega_0(x_i)&:=\{m\in \Z~:~2^{\min\{j_i,j_i+\ell_i\}}|x_i-2^{-(j_i+\ell_i)}m| \leq
1\}\,.
\end{split}
\end{equation}
We then estimate for fixed $x=(x_1,\ldots,x_d)\in\R^d$
\begin{equation}\label{est1}
  \begin{split}
       &\sum\limits_{\bm \in \Z^d} |\lambda_{\bj+\bl,\bm}|\prod\limits_{i=1}^d
    (1+2^{\min\{j_i,j_i+\ell_i\}}|x_i-2^{-(j_i+\ell_i)}m_i|)^{-R}\\
    &~~~~~=\sum\limits_{\bk \in \N_0^d}^{\infty}\sum\limits_{\bm \in \Omega_{\bk}(x)}
|\lambda_{\bj+\bl,\bm}|\prod\limits_{i=1}^d
    (1+2^{\min\{j_i,j_i+\ell_i\}}|x_i-2^{-(j_i+\ell_i)}m_i|)^{-R}\\
    &~~~~~\lesssim\sum\limits_{\bk\in \N_0^d}^{\infty}\sum\limits_{\bm \in \Omega_{\bk}(x)}
|\lambda_{\bj+\bl,\bm}|2^{-\delta\|\bk\|_1-\|\bk\|_1/r} \lesssim \sup\limits_{\bk\in \N_0^d} \Big(\sum\limits_{\bm \in
\Omega_{\bk}(x)}|\lambda_{\bj+\bl,\bm}|\Big)2^{-\|\bk\|_1/r}\\
&~~~~~\lesssim \Big(\sup\limits_{\bk\in \N_0^d}2^{-\|\bk\|_1}\sum\limits_{\bm \in
\Omega_{\bk}(x)}|\lambda_{\bj+\bl,\bm}|^r\Big)^{1/r}\,.
  \end{split}
\end{equation}
We further note that
\begin{equation}\label{est2}
  2^{-\|\bk\|_1}\sum\limits_{\bm \in
\Omega_{\bk}(x)}|\lambda_{\bj+\bl,\bm}|^r = 2^{-\|k\|_1}\int\limits_{\bigcup\limits_{\bm \in
\Omega_{\bk}(x)}Q_{\bj+\bl,\bm}} 2^{\|\bj+\bl\|_1} \sum\limits_{\bw \in \Omega_{\bk}(x)}
|\lambda_{\bj+\bl,\bw}|^r\chi_{\bj+\bl,\bw}(y)\, dy
\end{equation}
and observe that for $Q(x):=\bigcup\limits_{\bm \in
    \Omega_{\bk}(x)}Q_{\bj+\bl,\bm}$ we have $x\in Q(x)$ and
$$
    |Q(x)| \asymp 2^{\|\bk\|_1-\|\min\{\bj,\bj+\bl\}\|_1}.
$$
Recalling the definition of the Hardy-Littlewood maximal function in \eqref{maxfunc}, we obtain
\begin{equation}\nonumber
\begin{split}
&2^{-\|\bk\|_1+\|\min\{\bj,\bj+\bl\}\|_1} \int\limits_{Q(x)} \sum\limits_{\bm \in \Omega_{\bk}(x)}
|\lambda_{\bj+\bl,\bm}|^r\chi_{\bj+\bl,\bm}(y)\,dy \\
&\qquad\qquad\lesssim  
\frac{1}{|Q(x)|}\int\limits_{Q(x)} \sum\limits_{\bm \in \Omega_{\bk}(x)} |\lambda_{\bj+\bl,\bm}|^r\chi_{\bj+\bl,\bm}(y)\,dy
\leq M\Big|\sum\limits_{\bm\in \Z^d}
\lambda_{\bj+\bl,\bm}\chi_{\bj+\bl,\bm}\Big|^r(x)\,.
\end{split}
\end{equation}
Putting this into \eqref{est2}, we arrive at
$$
     2^{-\|\bk\|_1}\sum\limits_{\bm \in
\Omega_{\bk}(x)}|\lambda_{\bj+\bl,\bm}|^r \lesssim 2^{\|\bl_+\|_1}M\Big|\sum\limits_{\bm\in \Z^d}
\lambda_{\bj+\bl,\bm}\chi_{\bj+\bl,\bm}\Big|^r(x)\,.
$$
Finally, we plug this estimate into \eqref{est1} and obtain the desired assertion.
\end{proof}

Before stating our main result we need a further definition.

\begin{definition} Let $0<q\leq\infty$, $s\in\R$, and $\balpha=(\alpha_1,\ldots,\alpha_d)>0$ such that
$\sum\limits_{i=1}^d \alpha_i= d$. \\
{\em (i)} If $0<p<\infty$ we define the sequence space $\tilde{f}^{s,\balpha}_{p,q}$ as the
collection of all sequences $(\lambda_{\bj,\bk})_{\bj\in \N_0^d, \bk\in \Z^d} \subset \C$ such that
the (quasi-)norm (usual modification in case $q=\infty$)
$$
    \|(\lambda_{\bj,\bk})_{\bj\in \N_0^d, \bk\in \Z^d}\|_{\tilde{f}^{s,\balpha}_{p,q}}:=
    \Big\|\Big(\sum\limits_{\bj\in \N_0^d}2^{\|(j_1/\alpha_1,...,j_d/\alpha_d)\|_{\infty}sq}
    \Big|\sum\limits_{\bk\in \Z^d}\lambda_{\bj,\bk}\chi_{\bj,\bk}(\cdot)\Big|^q\Big)^{1/q}\Big\|_p
    \quad \text{is finite.}
$$
{\em (ii)} If $0<p\leq \infty$ we define the sequence space $\tilde{b}^{s,\balpha}_{p,q}$ as the
collection of all sequences $(\lambda_{\bj,\bk})_{\bj\in \N_0^d, \bk\in \Z^d} \subset \C$ such that
the (quasi-)norm (usual modification in case $q=\infty$)
$$
    \|(\lambda_{\bj,\bk})_{\bj\in \N_0^d, \bk\in \Z^d}\|_{\tilde{b}^{s,\balpha}_{p,q}}:=
    \Big(\sum\limits_{\bj\in \N_0^d}2^{\|(j_1/\alpha_1,...,j_d/\alpha_d)\|_{\infty}sq}
    \Big\|\sum\limits_{\bk\in \Z^d}\lambda_{\bj,\bk}\chi_{\bj,\bk}(\cdot)\Big\|_p^q\Big)^{1/q}
    \quad \text{is finite.}
$$
\end{definition}

Now we are ready to state the wavelet characterization of the space $\F^{s,\balpha}_{p,q}(\R^d)$\,. Recall that for
$0<p,q\leq \infty$ we put
$$
      \sigma_{p,q}:=\max\{1/p-1,1/q-1,0\}\quad\mbox{and}\quad \sigma_p:=\max\{1/p-1,0\}\,.
$$

\begin{remark}\label{rem:main_wav}
The theorem below states the result for the $F$-scale of spaces $\F^{a,\balpha}_{p,q}(\R^d)$. As for the corresponding result
for the Besov type spaces $\B^{s,\balpha}_{p,q}(\R^d)$, we simply replace  condition~\eqref{f0} on $K,L$ by
$$
K,L>\sigma_p+|s|/\alpha_{\text{min}}
$$
and use the corresponding sequence spaces $\tilde{b}^{s,\balpha}_{p,q}$.
\end{remark}

\begin{theorem}\label{main_wav} Let $0<p<\infty$, $0<q\leq\infty$, 
$s\in\R$, $\balpha=(\alpha_1,\ldots,\alpha_d)>0$ with $\sum_{i=1}^d \alpha_i = d$. 
Let further $\psi_0,\psi$ be wavelets satisfying (K) and (L) above with
\begin{equation}\label{f0}
      K, L>\sigma_{p,q}+|s|/\alpha_{\text{min}}.
\end{equation}
Then any $f\in \mathcal{S}'(\R^d)$ belongs to $\F^{s,\balpha}_{p,q}(\R^d)$ if and only if it can be represented as
\begin{equation}\label{f2}
f = \sum\limits_{\bj\in \N_0^d} \sum\limits_{\bk\in \Z^d} \lambda_{\bj,\bk} \psi_{\bj,\bk} 
\end{equation}
with $(\lambda_{\bj,\bk})_{\bj,\bk} \in \tilde{f}^{s,\balpha}_{p,q}$ and the sum converging in $\mathcal{S}'(\R^d)$ with respect to some ordering. 
For each $f\in\F^{s,\balpha}_{p,q}(\R^d)$ the convergence of the representation \eqref{f2} is then even unconditional. Moreover,
if $q<\infty$, the sum also converges in $\F^{s,\balpha}_{p,q}(\R^d)$ and
$(\psi_{\bj,\bk})_{\bj,\bk}$ constitutes an unconditional basis in $\F^{s,\balpha}_{p,q}(\R^d)$. The sequence of coefficients
$\lambda(f):=(\lambda_{\bj,\bk})_{\bj,\bk}$ is uniquely determined via
\begin{equation}\label{coeff}
\lambda_{\bj,\bk}= 2^{\|\bj\|_1}\langle f , \psi_{\bj,\bk}\rangle
\end{equation}
and we have the wavelet isomorphism (equivalent (quasi-)norm)
$$
\|f\|_{\F^{s,\balpha}_{p,q}(\R^d)} \asymp \|\lambda(f)\|_{\tilde{f}^{s,\balpha}_{p,q}}\quad,\quad f\in
\F^{s,\balpha}_{p,q}(\R^d)\,.
$$
\end{theorem}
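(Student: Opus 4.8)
The plan is to establish the two-sided estimate $\|f\|_{\F^{s,\balpha}_{p,q}(\R^d)} \lesssim \|\lambda(f)\|_{\tilde{f}^{s,\balpha}_{p,q}} \lesssim \|f\|_{\F^{s,\balpha}_{p,q}(\R^d)}$ via a synthesis and an analysis estimate, and then to read off convergence, uniqueness, and the unconditional basis property. Throughout, the engine is an almost-orthogonality scheme: the convolution relation of Lemma~\ref{conv2} supplies off-diagonal decay in the scale difference, Proposition~\ref{HL} converts the inner summation over the translation index into a Hardy--Littlewood maximal function, and the vector-valued Fefferman--Stein inequality (Theorem~\ref{feffstein}) finally removes the maximal operator from the $L_p(\ell_q)$-norm. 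I would fix once and for all an auxiliary parameter $r$ with $r<\min\{1,p,q\}$ close to that value and $R>1/r$; then $1/r-1$ is close to $\sigma_{p,q}$, which is the origin of the term $\sigma_{p,q}$ in \eqref{f0}.

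\textbf{Synthesis direction.} Given $\lambda=(\lambda_{\bj,\bk})\in\tilde{f}^{s,\balpha}_{p,q}$, I set $f:=\sum_{\bj,\bk}\lambda_{\bj,\bk}\psi_{\bj,\bk}$, apply the hyperbolic building block $\Delta_{\bm}$, and write $\Delta_{\bm}\psi_{\bj,\bk}=\cf^{-1}\theta_{\bm}\ast\psi_{\bj,\bk}$, where $\cf^{-1}\theta_{\bm}$ is a tensor product of scaled Schwartz kernels with vanishing moments in the coordinates with $m_i\ge 1$, hence admissible in Lemma~\ref{conv2}. Read with offset $\bl=\bj-\bm$, that lemma bounds $|\Delta_{\bm}\psi_{\bj,\bk}(x)|$ by $\prod_i 2^{-N_{\sign(\ell_i)}|\ell_i|}(1+2^{\min\{m_i,j_i\}}|x_i-2^{-j_i}k_i|)^{-R}$, and summation over $\bk$ through Proposition~\ref{HL} produces, for each fixed offset $\bl$, the factor $2^{\|\bl_+\|_1/r}[M|\sum_{\bk}\lambda_{\bj,\bk}\chi_{\bj,\bk}|^r]^{1/r}(x)$. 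The decisive bookkeeping is the passage of the anisotropic weight from scale $\bm$ to scale $\bj=\bm+\bl$: since $\|\cdot/\balpha\|_{\infty}$ is a genuine norm, the triangle inequality gives $|\,\|\bm/\balpha\|_{\infty}-\|\bj/\balpha\|_{\infty}\,|\le\|\bl/\balpha\|_{\infty}\le\|\bl\|_1/\amin$, so the weight shift factorizes into $\prod_i 2^{|s||\ell_i|/\amin}$. The total prefactor is then $\prod_i 2^{-N_{\sign(\ell_i)}|\ell_i|+|s||\ell_i|/\amin}\cdot 2^{\|\bl_+\|_1/r}$, and condition~\eqref{f0} is exactly what renders this summable as a product of geometric series over $\bl\in\Z^d$. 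Taking the $\ell_q(\bm)$- and then the $L_p$-norm, and extracting $M[\cdot]^{1/r}$ by Theorem~\ref{feffstein} (legitimate because $r<\min\{p,q\}$ and $R>1/r$), yields the synthesis bound; the same pointwise estimates make the partial sums a Cauchy net, giving convergence in $\F^{s,\balpha}_{p,q}(\R^d)$ for $q<\infty$ (and in $\S'(\R^d)$ in general) and, being independent of the ordering, unconditional convergence.

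\textbf{Analysis direction.} For $f\in\F^{s,\balpha}_{p,q}(\R^d)$ I put $\lambda_{\bj,\bk}:=2^{\|\bj\|_1}\langle f,\psi_{\bj,\bk}\rangle$ and expand $f=\sum_{\bl}\Delta_{\bl}f$. Choosing a companion resolution of unity with $\tilde\theta_{\bl}\theta_{\bl}=\theta_{\bl}$, I reproduce $\Delta_{\bl}f=\cf^{-1}\tilde\theta_{\bl}\ast\Delta_{\bl}f$ and move the kernel onto the wavelet, so that $\langle\Delta_{\bl}f,\psi_{\bj,\bk}\rangle=\int\Delta_{\bl}f(y)\,\overline{(\cf^{-1}\tilde\theta_{\bl}\ast\psi_{\bj,\bk})(y)}\,dy$. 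Lemma~\ref{conv2} once more controls the kernel $\cf^{-1}\tilde\theta_{\bl}\ast\psi_{\bj,\bk}$, and since $\Delta_{\bl}f$ is band-limited, the integral is dominated, via the Peetre maximal function together with Lemma~\ref{maxunglem}, by a maximal function of $\Delta_{\bl}f$ localized at $2^{-\bj}\bk$. The remainder mirrors the synthesis step: the $2^{\|\bj\|_1}$-normalization matches the $\chi_{\bj,\bk}$-normalization built into $\tilde{f}^{s,\balpha}_{p,q}$, Proposition~\ref{HL} and the weight-shift inequality produce the same summable prefactor under \eqref{f0}, and Theorem~\ref{peetremax} with Theorem~\ref{feffstein} give $\|\lambda(f)\|_{\tilde{f}^{s,\balpha}_{p,q}}\lesssim\|f\|_{\F^{s,\balpha}_{p,q}(\R^d)}$. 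Uniqueness of the coefficients and the biorthogonal formula \eqref{coeff} then follow by testing any admissible $\S'$-representation \eqref{f2} against a fixed $\psi_{\bj,\bk}$ and invoking the near-orthogonality, after which the synthesis bound upgrades $(\psi_{\bj,\bk})_{\bj,\bk}$ to an unconditional basis when $q<\infty$.

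I expect the main obstacle to lie in the analysis direction, precisely in reducing the integral $\int|\Delta_{\bl}f(y)|\prod_i(1+2^{\min\{l_i,j_i\}}|y_i-2^{-j_i}k_i|)^{-R}\,dy$ to a maximal function of $\Delta_{\bl}f$: this is where band-limitedness of $\Delta_{\bl}f$ must be exploited quantitatively through the Peetre maximal machinery, and where one has to verify that a single choice of $r$ and $R$ can simultaneously meet all constraints ($r\le 1$, $r<\min\{p,q\}$, and $R>1/r$) while keeping the geometric series over $\bl\in\Z^d$ convergent. The fact that these constraints are jointly satisfiable is exactly the quantitative content of the smoothness/moment condition~\eqref{f0}, and checking the summability coordinate by coordinate for the mixed $\sign(\ell_i)$-cases is the most delicate accounting in the proof.
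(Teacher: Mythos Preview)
Your proposal is correct and follows essentially the same strategy as the paper: both directions run on Lemma~\ref{conv2} for the off-diagonal decay, Proposition~\ref{HL} to pass from the translation sum to a Hardy--Littlewood maximal function, the triangle inequality $|\,\|\bm/\balpha\|_\infty-\|\bj/\balpha\|_\infty\,|\le\|\bl/\balpha\|_\infty\le\|\bl\|_1/\amin$ for the weight shift, and Theorems~\ref{feffstein}/\ref{peetremax} to close, with condition~\eqref{f0} guaranteeing summability of the geometric factor over $\bl\in\Z^d$. The only organisational difference is in the analysis step: the paper first discretizes the integral $\langle\Theta_{\bj+\bl}\ast f,\Lambda_{\bj+\bl}\ast\psi_{\bj,\bk}\rangle$ over the cubes $Q_{\bj+\bl,\bm}$, works with the cube-wise suprema $\theta_{\bj+\bl,\bm}(f)$, applies Proposition~\ref{HL} to the sum over~$\bm$, and only at the end recognizes these suprema as Peetre maximal values---whereas you invoke the Peetre maximal function more directly; this is a matter of presentation, not substance.
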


\begin{remark}\label{dualp} Following \cite[Prop.\ 3.20]{LiYaYuSaUl13}, the dual pairing of $f\in\mathcal{S}^{\prime}(\R^d)$ and $\psi_{\bj,\bk}\in C^{K}(\R^d)$ in \eqref{coeff} has to be
understood in the way
\begin{equation}\label{f3}
    \langle f, \psi \rangle:= \sum\limits_{\bj \in \N_0^d} \langle \Theta_{\bj}\ast f, \Lambda_{\bj}\ast \psi
\rangle_{L_2(\R^d)} \,.
\end{equation}
Here we choose $\Theta_{\bj}:=\cf^{-1}\theta_{\bj}$ and $\Lambda_{\bj}:=\cf^{-1}\lambda_{\bj}$ such that
$\sum\limits_{j=0}^{\infty} \theta_j \lambda_j \equiv 1\,.$
Using elementary estimates and the Nikolskij inequality in case $p<1$, one can show 
$$\F^{s,\balpha}_{p,q}(\R^d) \hookrightarrow
\B^{s,\balpha}_{p,\infty}(\R^d) \hookrightarrow S^{-|s|/\alpha_{\text{min}}-\sigma_p}_{\max\{p,1\},\infty}B(\R^d)\,.$$
Setting $s_{\balpha,p}:=|s|/\alpha_{\text{min}}+\sigma_p$ and $\tilde{p}:=\max\{p,1\}$ we obtain
\begin{equation*}
\begin{split}
   |\langle f, \psi\rangle| &\leq \sum\limits_{\bj \in \N_0^d} \|\Theta_{\bj}\ast f\|_{\tilde{p}} \|\Lambda_{\bj}\ast
    \psi\|_{\tilde{p}'}\\
    &\leq \sup\limits_{\bj \in \N_0^d} 2^{-s_{\balpha,p}\|\bj\|_1}\|\Theta_{\bj}\ast
    f\|_{\tilde{p}} \cdot \sum\limits_{\bj\in \N_0^d} 2^{s_{\balpha,p}\|\bj\|_1}\|\Lambda_{\bj}\ast
   \psi\|_{\tilde{p}'}\,\\
   &\lesssim \|f\|_{S^{-s_{\balpha,p}}_{\tilde{p},\infty}B(\R^d)}\cdot
\|\psi\|_{S^{s_{\balpha,p}}_{\tilde{p}',1}B(\R^d)}\,,
\end{split}
\end{equation*}
where the right-hand side is finite due to \eqref{f0} and \eqref{conv3}. In other words, $f\in\F^{s,\balpha}_{p,q}(\R^d)$ generates a (conjugate)
linear functional on the Banach space $S^{s_{\balpha,p}}_{\tilde{p}',1}B(\R^d)$.
\end{remark}

\begin{remark}\label{dom_mixed} As we will see below, our arguments apply as well to classical
spaces of dominating mixed smoothness $S^r_{p,q}B(\R^d)$ and $S^r_{p,q}F(\R^d)$, defined in Definition \ref{def_mixed}
above. Examining the proof, we obtain for the relation
$$
      \|f\|_{S^r_{p,q}F(\R^d)} \lesssim \|\lambda(f)\|_{s^{r}_{p,q}f} \,, 
$$
where $s^{r}_{p,q}f$ is the sequence space associated to $S^r_{p,q}F$ (for a definition see~\cite[Def.~2.1]{Vyb06}),
the condition
\begin{equation}\label{f12b}
      L>\sigma_{p,q}-r \quad \mbox{and} \quad K>r\,.
\end{equation}
The converse relation holds under the condition
$$
    K>\sigma_{p,q}-r \quad\mbox{and}\quad L>r\,.
$$
For the spaces $S^r_{p,q}B(\R^d)$ we replace $\sigma_{p,q}$ by $\sigma_p$
and $s^{r}_{p,q}f$ by $s^{r}_{p,q}b$, which is the sequence space associated to $S^r_{p,q}B$ (for a definition see~\cite[Def.~2.1]{Vyb06}).
\end{remark}

\begin{proof}[of Theorem~\ref{main_wav}]
{\em Step 1.} We consider the sum
\begin{equation}\label{sum}
      f := \sum\limits_{\bj\in \N_0^d} \sum\limits_{\bk\in \Z^d} \lambda_{\bj,\bk} \psi_{\bj,\bk}
\end{equation}
with $\lambda:=(\lambda_{\bj,\bk})_{\bj,\bk} \in \tilde{f}^{s,\balpha}_{p,q}$ and show the relation
$$
      \|f\|_{\F^{s,\balpha}_{p,q}(\R^d)} \lesssim \|\lambda\|_{\tilde{f}^{s,\balpha}_{p,q}}\,.
$$
For the issues on the convergence and uniqueness of \eqref{sum}  and \eqref{coeff} we refer to Step 3 and 4 below, where
we show that under the assumption $(\lambda_{\bj,\bk})_{\bj,\bk} \in \tilde{f}^{s,\balpha}_{p,q}$ the element $f$ is well defined, with unconditional convergence of \eqref{sum} at least in 
$\mathcal{S}'(\R^d)$, which is sufficient for the subsequent considerations. 

Let us consider $\Delta_{\bj}(f)$ for some chosen hyperbolic Littlewood-Paley analysis. This gives for
fixed
$\bj\in \N_0^d$
\begin{equation*}
    2^{\|\bj/\balpha\|_{\infty}s}|\Delta_{\bj}(f)(x)| \leq
\sum\limits_{\bl\in \Z^d}2^{\|\bj/\balpha\|_{\infty}s}\Big|\Theta_{\bj}\ast\Big(\sum\limits_{\bk\in \Z^d}
\lambda_{\bj+\bl,\bk}\psi_{\bj+\bl,\bk}\Big)(x)\Big|\,,\\
\end{equation*}
where $\Theta_{\bj} := \cf^{-1}\theta_{\bj}$ and $(\theta_{\bj})_{\bj}$ is the system from Definition \ref{def:hypLP}.
With $u:=\min\{p,q,1\}$
\begin{equation}\label{f5}
 \begin{split}
    \|f\|_{\F^{s,\balpha}_{p,q}(\R^d)} &=
\big\|2^{\|\bj/\balpha\|_{\infty}s}\Delta_{\bj}(f)(\cdot)\big\|_{L_p(\ell_q)}\\
    &\lesssim \Big(\sum\limits_{\bl\in \Z^d}
\Big\|2^{\|\bj/\balpha\|_{\infty}s}\Theta_{\bj}\ast\Big(\sum\limits_{\bk\in \Z^d}
\lambda_{\bj+\bl,\bk}\psi_{\bj+\bl,\bk}\Big)(\cdot)\Big\|^u_{L_p(\ell_{q}[\bj])}\Big)^{1/u} \,.
 \end{split} 
\end{equation}
With the help of Lemma \ref{conv2} we are aiming for pointwise estimates first.
\begin{equation}\nonumber
 \begin{split}
  &\Big|\Theta_{\bj}\ast\Big(\sum\limits_{\bk\in \Z^d}
\lambda_{\bj+\bl,\bk}\psi_{\bj+\bl,\bk}\Big)(x)\Big| \leq \sum\limits_{\bk\in \Z^d}
|\lambda_{\bj+\bl,\bk}|\cdot|(\Theta_{\bj} \ast \psi_{\bj+\bl,\bk})(x)|\\
&~~~~~~~\lesssim \Big(\prod\limits_{i=1}^d 2^{-N_{\sign(\ell_i)}|\ell_i|}\Big)\sum\limits_{\bk\in \Z^d}
|\lambda_{\bj+\bl,\bk}| \prod\limits_{i=1}^d
(1+2^{\min\{j_i,j_i+\ell_i\}}|x_i-2^{-(j_i+\ell_i)}k_i|)^{-R}\,,
\end{split}
\end{equation}
where we choose $R>1/r$ with $r<\min\{1,p,q\}=u$. Note that, due to condition $(K)$ for the wavelet and Lemma~\ref{conv2},
we can choose $R>0$ arbitrarily large. 
This allows for estimating with the help of Proposition~\ref{HL}
$$
    \Big|\Theta_{\bj}\ast\Big(\sum\limits_{\bk\in \Z^d}
\lambda_{\bj+\bl,\bk}\psi_{\bj+\bl,\bk}\Big)(x)\Big| \lesssim 2^{-\langle \bar{N}_{\sign(\bl)},
|\bl|\rangle}2^{\|\bl_+\|_1/r}\Big[M\Big|\sum\limits_{\bk\in \Z^d}
\lambda_{\bj+\bl,\bk}\chi_{\bj+\bl,\bk}(\cdot)\Big|^r\Big]^{1/r}(x)\,.
$$
Hereby we use the short-hand notation $\bl_+:=((\ell_1)_{+},\ldots,(\ell_d)_{+})$ and
$$
\langle \bar{N}_{\sign(\bl)}, |\bl|\rangle := \sum\limits_{i=1}^d N_{\sign(\ell_i)}|\ell_i| \,.
$$
Plugging this estimate into \eqref{f5} gives
\begin{equation}\nonumber
\begin{split}
    \|f\|_{\F^{s,\balpha}_{p,q}(\R^d)} \lesssim& \Big(\sum\limits_{\bl\in \Z^d}2^{-u\langle \bar{N}_{\sign(\bl)},
|\bl|\rangle}2^{u\|\bl_+\|_1/r}\\
    &~~~\Big\|2^{-\|(\bj+\bl)/\balpha\|_{\infty}s}2^{\|\bj/\balpha\|_{\infty}s}
    \Big[M\Big|2^{\|(\bj+\bl)/\balpha\|_{\infty}s}\sum\limits_{\bk\in \Z^d}
\lambda_{\bj+\bl,\bk}\chi_{\bj+\bl,\bk}\Big|^r\Big]^{1/r}(\cdot)\Big\|^u_{L_p(\ell_{q}[\bj])}\Big)^{\frac{1}{u}}_{\,.}
\end{split}
\end{equation}
Clearly, if $s\geq 0$ then $\|(\bj+\bl)/\balpha\|_\infty s \geq \|\bj/\balpha\|_{\infty}s-\|\bl/\balpha\|_{\infty}s$ and
hence
\begin{equation}\label{f7}
   2^{-\|(\bj+\bl)/\balpha\|_{\infty}s}2^{\|\bj/\balpha\|_{\infty}s} \leq 2^{\|\bl/\balpha\|_{\infty}|s|}\,.
\end{equation}
In addition, if $s<0$ we also obtain~\eqref{f7} via the usual $\ell_{\infty}$-triangle inequality.

Putting this into the previous
estimate, using the vector-valued Hardy-Littlewood maximal inequality (Theorem \ref{feffstein}) for the space
$L_{p/r}(\ell_{q/r}[\bj])$, we obtain
\begin{equation*}
  \begin{split}
   \|f\|_{\F^{s,\balpha}_{p,q}(\R^d)} \lesssim& \Big(\sum\limits_{\bl\in \Z^d}2^{-u\langle \bar{N}_{\sign(\bl)},
    |\bl|\rangle}2^{u\|\bl_+\|_1/r}2^{u\|\bl/\balpha\|_{\infty}|s|}\\
    &~~~\Big\|2^{\|(\bj+\bl)/\balpha\|_{\infty}s}\Big|\sum\limits_{\bk\in \Z^d}
\lambda_{\bj+\bl,\bk}\chi_{\bj+\bl,\bk}\Big|\Big\|^u_{L_p(\ell_{q}[\bj])}\Big)^{1/u}\\
\lesssim& \|\lambda\|_{\tilde{f}^{s,\balpha}_{p,q}}\Big(\sum\limits_{\bl\in \Z^d}2^{-u\langle \bar{N}_{\sign(\bl)},
    |\bl|\rangle }2^{u\|\bl_+\|_1/r}2^{u\|\bl/\balpha\|_1|s|}\Big)^{1/u}\,.
  \end{split}
\end{equation*}
The sum over $\bl$ converges if $L+1=N_+>1/r+|s|/\alpha_{\text{min}}$ and $K=N_-> |s|/\alpha_{\text{min}}\,.$

{\em Step 2. } Let us prove the converse relation
$\|\lambda(f)\|_{\tilde{f}^{s,\balpha}_{p,q}} \lesssim \|f\|_{\F^{s,\balpha}_{p,q}(\R^d)}$ with $\lambda(f)=(2^{\|j\|_1}\langle f,\psi_{\bj,\bk} \rangle)_{\bj,\bk}$ 
 and start with $f\in \F^{s,\balpha}_{p,q}(\R^d)$. As already pointed
out in Remark \ref{dualp}, the dual pairing $\langle f,\psi_{\bj,\bk}\rangle$ makes sense due to condition~\eqref{f0}. Our estimation begins as follows
\begin{equation}\label{step2}
  \begin{split}
     |2^{\|j\|_1}\langle f,\psi_{\bj,\bk}\rangle| \leq& \sum\limits_{\bl \in \Z^d} |2^{\|\bj\|_1}\langle
\Theta_{\bj+\bl}\ast f, \Lambda_{\bj+\bl}\ast \psi_{\bj,\bk}\rangle|\\
 =& \sum\limits_{\bl\in \Z^d} \sum\limits_{\bm\in \Z^d} \Big|\int\limits_{Q_{\bj+\bl,\bm}}(\Theta_{\bj+\bl}\ast
f)(y)2^{\|\bj\|_1}(\Lambda_{\bj+\bl}\ast \psi_{\bj,\bk})(y)\,dy\Big|\\
\leq& \sum\limits_{\bl\in \Z^d} \sum\limits_{\bm\in \Z^d} |\theta_{\bj+\bl,\bm}(f)|
  \int\limits_{Q_{\bj+\bl,\bm}}2^{\|\bj\|_1}|(\Lambda_{\bj+\bl}\ast \psi_{\bj,\bk})(y)|\,dy\,,
  \end{split}
\end{equation}
where we put
\begin{equation}\label{f8}
   \theta_{\bj+\bl,\bm}(f):=\sup\limits_{y\in Q_{\bj+\bl,\bm}} |(\Theta_{\bj+\bl}\ast f)(y)|\,.
\end{equation}
We next estimate the integral, as in Step~1 with the help of Lemma~\ref{conv2}. Here we have to be particularly careful with
the normalization factors. Note that, compared to \eqref{conv3}, the signs of the components of $\bl$ in the
convolution $\Lambda_{\bj+\bl}\ast \psi_{\bj,\bk}$ change the role. This is why we put this time $M_0:=0$, $M_-:= L$, $M_+:= K+1$ and $\bar{M}_{\sign(\bl)}:=(M_{\sign(b_1)},\ldots,M_{\sign(b_d)})$. We then obtain for $z=(z_1,\ldots,z_d)\in
Q_{\bj,\bk}$ the estimate
\begin{equation*}
 \begin{split}
    &\int\limits_{Q_{\bj+\bl,\bm}}2^{\|j\|_1}|(\Lambda_{\bj+\bl}\ast \psi_{\bj,\bk})(y)|\,dy\\
    &~~~~~\lesssim \sup\limits_{y\in Q_{\bj+\bl,\bm}}2^{-\langle \bar{M}_{\sign(\bl)},|\bl|
\rangle}\prod\limits_{i=1}^d (1+2^{\min\{j_i,j_i+\ell_i\}}|y_i-2^{-j_i}k_i|)^{-R}\\
&~~~~~\lesssim 2^{-\langle \bar{M}_{\sign(\bl)},|\bl|
\rangle}\prod\limits_{i=1}^d (1+2^{\min\{j_i,j_i+\ell_i\}}|2^{-(j_i+\ell_i)}m_i-2^{-j_i}k_i|)^{-R}\\
&~~~~~\lesssim 2^{-\langle \bar{M}_{\sign(\bl)},|\bl|
\rangle}\prod\limits_{i=1}^d (1+2^{\min\{j_i,j_i+\ell_i\}}|2^{-(j_i+\ell_i)}m_i-z_i|)^{-R}\,.
 \end{split}
\end{equation*}
This together with \eqref{step2} and Proposition~\ref{HL} yields for $r<\min\{p,q,1\}=u$ and $z\in \R^d$
\begin{equation*}
  \begin{split}
    &\sum\limits_{\bk\in \Z^d} |2^{\|j\|_1}\langle
f,\psi_{\bj,\bk}\rangle|\chi_{\bj,\bk}(z)\\
    &~~~~~\lesssim \sum\limits_{\bl\in \Z^d}2^{-\langle \bar{M}_{\sign(\bl)},|\bl|
\rangle} \sum\limits_{\bm\in \Z^d} |\theta_{\bj+\bl,\bm}(f)|\prod\limits_{i=1}^d
(1+2^{\min\{j_i,j_i+\ell_i\}}|2^{-(j_i+\ell_i)}m_i-z_i|)^{-R}\\
&~~~~~\lesssim \sum\limits_{\bl\in \Z^d}2^{-\langle \bar{M}_{\sign(\bl)},|\bl|\rangle}
2^{\|\ell_+\|_1/r}\Big[M\Big|\sum\limits_{\bm\in \Z^d}
\theta_{\bj+\bl,\bm}(f)\chi_{\bj+\bl,\bm}(\cdot)\Big|^r\Big]^{1/r}(z)\,.
\end{split}
\end{equation*}
This leads to
\begin{equation}\nonumber
 \begin{split}
   &2^{\|\bj/\balpha\|_{\infty}s}\sum\limits_{\bk\in \Z^d} |2^{\|j\|_1}\langle
f,\psi_{\bj,\bk}\rangle|\chi_{\bj,\bk}\\
&~~~\lesssim \sum\limits_{\bl\in \Z^d}
2^{\|\ell_+\|_1/r-\langle\bar{M}_{\sign(\bl)},|\bl|\rangle+\|\bj/\balpha\|_{\infty}s-\|(\bj+\bl)/\balpha
\|_{\infty}s}\Big[M\Big|2^{\|(\bj+\bl)/\balpha\|_{\infty}} \sum\limits_ { \bm\in \Z^d}
\theta_{\bj+\bl,\bm}(f)\chi_{\bj+\bl,\bm}(\cdot)\Big|^r\Big]^{\frac{1}{r}}_{\,.}
 \end{split}
\end{equation}
Taking the $L_p(\ell_{q}[\bj])$-(quasi-)norm on both sides and using \eqref{f7} once more, we obtain
\begin{equation}\label{f9}
  \begin{split}
     &\Big\|2^{\|\bj/\balpha\|_{\infty}s}\sum\limits_{\bk\in \Z^d} |2^{\|j\|_1}\langle
f,\psi_{\bj,\bk}\rangle|\chi_{\bj,\bk}\Big\|_{L_p(\ell_{q}[\bj])}\\
&~~\lesssim\Big(\sum\limits_{\bl\in \Z^d}2^{-u\langle \bar{M}_{\sign(\bl)},|\bl|\rangle}
2^{u\|\ell_+\|_1/r}2^{u\|\bl/\balpha\|_{\infty}|s|}\times\\
&~~~~~~~\times\Big\|\Big[M\Big|2^{\|(\bj+\bl)/\balpha\|_{\infty}s}
\sum\limits_{ \bm\in \Z^d}
\theta_{\bj+\bl,\bm}(f)\chi_{\bj+\bl,\bm}(\cdot)\Big|^r\Big]^{1/r}\Big\|^u_{L_p(\ell_{q}[\bj])}\Big)^{1/u}\,.
  \end{split}
\end{equation}
Due to $r<\min\{1,p,q\} = u$ we can apply the Hardy-Littlewood maximal inequality (Theorem~\ref{feffstein}) and obtain
$$
\Big\|\Big[M\Big|2^{\|(\bj+\bl)/\balpha\|_{\infty}s} \sum\limits_{
\bm\in \Z^d}
\theta_{\bj+\bl,\bm}(f)\chi_{\bj+\bl,\bm}\Big|^r\Big]^{1/r}\Big\|_{L_p(\ell_{q}[\bj])}
\lesssim \Big\|2^{\|\bj/\balpha\|_{\infty}s} \sum\limits_{
\bm\in \Z^d}
\theta_{\bj,\bm}(f)\chi_{\bj,\bm}\Big\|_{L_p(\ell_{q})}\,.
$$
From \eqref{f8} we obtain for any $z\in Q_{\bj,\bm}$ and any $a>0$
$$
   |\theta_{\bj,\bm}(f)| = \sup\limits_{y\in Q_{\bj,\bm}} |(\Theta_{\bj}\ast f)(y)| \lesssim
\sup\limits_{y\in \R^d}\frac{|(\Theta_{\bj}\ast f)(y)|}{\prod\limits_{i=1}^d(1+2^{j_i}|z_i-y_i|)^a} =
P_{2^{\bj},a}(\Theta_{\bj}\ast f)(z)\,,
$$
where we used the definition of the Peetre maximal function in \eqref{petfefste}. Choosing $a>\max\{\frac{1}{p},\frac{1}{q}\}$ 
with the corresponding maximal inequality in Theorem \ref{peetremax} then yields the relation
\begin{equation*}
 \begin{split}
   \Big\|2^{\|\bj/\balpha\|_{\infty}s} \sum\limits_{
   \bm\in \Z^d} \theta_{\bj,\bm}(f)\chi_{\bj,\bm}\Big\|_{L_p(\ell_{q})} &\lesssim
\|P_{2^{\bj},a}(2^{\|\bj/\balpha\|_{\infty}s}\Theta_{\bj}\ast f)\|_{L_p(\ell_q)}\\
&\lesssim \|2^{\|\bj/\balpha\|_{\infty}s}\Theta_{\bj}\ast f\|_{L_p(\ell_q)}
\asymp \|f\|_{\F^{s,\balpha}_{p,q}(\R^d)}\,.
\end{split}
\end{equation*}
Returning to \eqref{f9}, we have seen
\begin{equation*}
 \begin{split}
   &\Big\|2^{\|\bj/\balpha\|_{\infty}s}\sum\limits_{\bk\in \Z^d} |2^{\|j\|_1}\langle
   f,\psi_{\bj,\bk}\rangle|\chi_{\bj,\bk}\Big\|_{L_p(\ell_{q}[\bj])}\\
   &~~~~~\lesssim\|f\|_{\F^{s,\balpha}_{p,q}(\R^d)}\Big(\sum\limits_{\bl\in \Z^d}2^{-u\langle
\bar{M}_{\sign(\bl)},|\bl|\rangle}
   2^{u\|\ell_+\|_1/r}2^{u\|\bl/\balpha\|_{\infty}|s|}\Big)^{1/u}\,.
\end{split}
\end{equation*}
It remains to discuss the sum over $\bl$. It is easy to see that it converges if $K+1 = M_+>
1/r+|s|/\alpha_{\text{min}}$ and $L=M_- > |s|/\alpha_{\text{min}}$. Recall that $r$ is chosen such that
$r<\min\{1,p,q\}$.

{\em Step 3.} Let us now clarify the convergence issues in \eqref{f2} in case
$q<\infty$. The arguments in Step 1 above show in particular for a finite partial summation of \eqref{f2} that
$$
    \Big\|\sum\limits_{\bj}\sum\limits_{\bk} \lambda_{\bj,\bk}\psi_{\bj,\bk}\Big\|_{\F^{s,\balpha}_{p,q}(\R^d)}
\lesssim \Big\|\Big(\sum\limits_{\bj}2^{\|\bj/\balpha\|_{\infty}sq}\Big|\sum\limits_{\bk}
\lambda_{\bj,\bk}\chi_{\bj,\bk}\Big|^q\Big)^{1/q}\Big\|_p\,.
$$
If $q<\infty$ (note that $p<\infty$ anyway) we use Lebesgue's dominated convergence theorem to conclude the
unconditional convergence of \eqref{f2} in $\F^{s,\balpha}_{p,q}(\R^d)$. The required majorant is thereby given by
$$\|(\lambda_{\bj,\bk})_{\bj\in \N_0^d, \bk\in \Z^d}\|_{\tilde{f}^{s,\balpha}_{p,q}} < \infty.$$ In case $q=\infty$ we
use the observation in Remark \ref{dom_mixed}. From a simple application of H\"older's inequality (with respect to the
sum
over $\bj$) we first obtain for any $\varepsilon>0$ the relation
\begin{equation}\label{majorant}
   \|(\lambda_{\bj,\bk})_{\bj\in \N_0^d,\bk\in \Z^d}\|_{s^{r(s,\balpha)-\varepsilon}_{p,1}f} \lesssim
\|(\lambda_{\bj,\bk})_{\bj\in \N_0^d,\bk\in \Z^d}\|_{\tilde{f}^{s,\balpha}_{p,\infty}} \quad\text{with $r(s,\balpha) := -|s|/\alpha_{\min}$}\,.
\end{equation}
Choosing $\varepsilon>0$ small enough, we then obtain from \eqref{f0} that
condition~\eqref{f12b} in Remark \ref{dom_mixed} is satisfied. Hence, for a finite partial
summation of \eqref{f2} we have
$$
    \Big\|\sum\limits_{\bj}\sum\limits_{\bk}
\lambda_{\bj,\bk}\psi_{\bj,\bk}\Big\|_{S^{r(s,\alpha)-\varepsilon}_{p,1}F(\R^d)}
\lesssim \Big\|\sum\limits_{\bj}2^{(r(s,\balpha)-\varepsilon)\|\bj\|_1}\sum\limits_{\bk}
|\lambda_{\bj,\bk}|\chi_{\bj,\bk}\Big\|_p\,.
$$
Again, by Lebesgue's dominated convergence theorem (the majorant given by \eqref{majorant}) we see the unconditional
convergence of \eqref{f2} in the space $S^{r(s,\alpha)-\varepsilon}_{p,1}F(\R^d)$. Taking the embedding
$S^{r(s,\alpha)-\varepsilon}_{p,1}F(\R^d) \hookrightarrow \mathcal{S}'(\R^d)$ into account, we actually proved more
than stated in the theorem.

{\em Step 4.} It remains to prove \eqref{f2} for $f\in \F^{s,\balpha}_{p,q}(\R^d)$ and coefficients
$\lambda_{\bj,\bk}(f)$ chosen as in \eqref{coeff}. From Steps 1, 2, 3 above we have learned that
$\{\lambda_{\bj,\bk}(f)\}_{\bj,\bk} \in \tilde{f}^{s,\balpha}_{p,q}$, which implies that the sum
$$
    \sum\limits_{\bj\in \N_0^d}\sum\limits_{\bk\in \Z^d} \lambda_{\bj,\bk}(f)\psi_{\bj,\bk}
$$
converges (at least in) $\mathcal{S}'(\R^d)$ to an element $g \in \mathcal{S}'(\R^d)$. We now prove that
$f(\varphi) = g(\varphi)$ for all $\varphi \in \mathcal{S}(\R^d)$. Fix $\varphi\in \mathcal{S}(\R^d)$, then clearly
$\bar{\varphi} \in L_2(\R^d)$ and we have
\begin{equation}\label{f30}
    \bar{\varphi} = \sum\limits_{\bj \in \N_0^d} \sum\limits_{\bk\in \Z^d} \langle \bar{\varphi}, \psi_{\bj,\bk}\rangle
\psi_{\bj,\bk}
\end{equation}
with convergence in $L_2(\R^d)$. Since $\bar{\varphi} \in S^{s_{\balpha,p}}_{\tilde{p}',1}B(\R^d)$ we have by Step 1,
2, 3 above that the right-hand side of \eqref{f30} converges in $S^{s_{\balpha,p}}_{\tilde{p}',1}B(\R^d)$ to some $\eta
\in S^{s_{\balpha,p}}_{\tilde{p}',1}B(\R^d)$. Hence, we have $\bar{\varphi} = \eta$ in $\mathcal{S}'(\R^d)$ which
finally gives $\bar{\varphi} = \eta$ almost everywhere and, in other words, \eqref{f30} holds true in
$S^{s_{\balpha,p}}_{\tilde{p}',1}B(\R^d)$. Then $f(\varphi)$ can be rewritten as follows, using the continuity of
$\langle f,\cdot\rangle$ (see Remark \ref{dualp}),
\begin{equation*}
\begin{split}
  f(\varphi) = \langle f,\bar{\varphi} \rangle &= \Big\langle f,\sum\limits_{\bj\in \N_0^d}\sum\limits_{\bk \in
\Z^d} \langle
\bar{\varphi} ,\psi_{\bj,\bk}\rangle\psi_{\bj,\bk}\Big\rangle \\
&= \sum\limits_{\bj\in \N_0^d}\sum\limits_{\bk \in \Z^d} \overline{\langle \bar{\varphi}
,\psi_{\bj,\bk}\rangle}\langle f,\psi_{\bj,\bk}\rangle
= \sum\limits_{\bj\in \N_0^d}\sum\limits_{\bk \in \Z^d} \langle \varphi
,\overline{\psi_{\bj,\bk}}\rangle\langle f,\psi_{\bj,\bk}\rangle\,.
\end{split}
\end{equation*}
On the other hand,
$$
    g(\varphi) = \sum\limits_{\bj\in \N_0^d}\sum\limits_{\bk\in \Z^d}\lambda_{\bj,\bk}(f)\psi_{\bj,\bk}(\varphi) =
\sum\limits_{\bj\in \N_0^d}\sum\limits_{\bk \in \Z^d} \langle \varphi
,\overline{\psi_{\bj,\bk}}\rangle\langle f,\psi_{\bj,\bk}\rangle = f(\varphi)\,,
$$
which finishes the proof. 
\end{proof}


\section{Hyperbolic Haar characterization}

\noindent
We next utilize a hyperbolic Haar basis for the characterization of the spaces
$\widetilde{B}^{s,\balpha}_{p,q}(\R^d)$ and $\widetilde{F}^{s,\balpha}_{p,q}(\R^d)$ from
Definitions~\ref{defB} and~\ref{defHTLS},
the main result being Theorem~\ref{thm:haarmain}.
It will show that Haar characterizations are possible in a certain restricted range of parameters, although the Haar wavelet does not fulfill smoothness requirements $(K)$ as assumed for the derivation of Theorem~\ref{main_wav} in the previous section.
Hence, for the proof of Theorem~\ref{thm:haarmain} a different methodology is needed than in Section~4. 
We follow the technique used in \cite{GaSeUl2019b}, exploiting the special
structure of the Haar wavelet.

We begin by fixing a convenient inhomogeneous Haar system on the real line, namely 
\[\label{HaarS}
\mathcal{H}_{1} := \big\{h_{j,k}~:~k\in\Z,\, j\in\N_{0} \big\}\,,
\]
where for $j\in\N$, $k\in\Z$, the functions $h_{j,k}$ are scaled Haar functions of the form
$$
h_{j,k}(x):=\frac{1}{\sqrt{2}} h(2^{j-1}x - k)  \quad\text{,}\quad\text{where}\quad h(x):=\bbone_{I^+_{0,0}}(x)-\bbone_{I^-_{0,0}}(x) \,.
$$
The intervals
$I^{+}_{j,k}=[2^{-j}k, 2^{-j}(k+1/2))$ and
$I^{-}_{j,k}=[2^{-j}(k+1/2), 2^{-j}(k+1))$ thereby represent the dyadic children
of the standard dyadic intervals $I_{j,k}=[2^{-j}k, 2^{-j}(k+1))$.
At the lowest scale $j=0$ the ordinary Haar functions $\bbone_{I^+_{0,k}}-\bbone_{I^-_{0,k}}$ are replaced by the characteristic functions $h_{0,k}:=\bbone_{I_{0,k}}$. Further, we set $h_{j,k} \equiv 0$ if $j<0$.
Defined like this, the structure of the system $\mathcal{H}_{1}$
fits closely to the wavelet systems considered in Section 4. The
inhomogeneous scale is at $j=0$ (and not the usual standard $j=-1$ for Haar systems).

For dimension $d\in\N$ we derive a corresponding hyperbolic $d$-variate Haar system by the following tensorization procedure,
\begin{align}\label{HaarSd}
\mathcal{H}_{d} := \big\{ h_{\bj,\bk}:= h_{j_1,k_1} \otimes \cdots \otimes h_{j_d,k_d}  ~:~ \bk=(k_1,\ldots,k_d)\in\Z^d, \bj=(j_1,\ldots,j_d)\in\N^d_{0} \big\}\,.
\end{align}
Note that for $\bj\in\N^d$ and $\bk\in\Z^{d}$ the cube
\begin{align*}
Q_{\bj,\bk}:=[2^{-j_1}k_1,2^{-j_1}(k_1+1))\times \cdots \times [2^{-j_d}k_d,2^{-j_d}(k_d+1)),
\end{align*}
whose characteristic function will subsequently be denoted by $\chi_{\bj,\bk}$ as already earlier in \eqref{dyQubes},
corresponds to the strict support of the Haar function $h_{\bj,\bk}$. 
At each fixed ``scale'' $\bj$ these cubes represent 
a partition of the $d$-dimensional domain $\R^d$.

\begin{proposition}\label{lem5.1}
Let $1<p,q<\infty$, $s\in\R$, and $\balpha=(\alpha_1,\ldots,\alpha_d)>0$ such that $\sum_{i=1}^d\alpha_i=d$.
Under the condition
\begin{align*}
|s|/\alpha_{\text{min}}< 
\min\Big\{1-\frac{1}{p},1-\frac{1}{q}\Big\}
\end{align*}
we have for $f\in S^\prime(\R^d)$ (with the dual pairing $\langle  f,h_{\bj,\bk} \rangle$ defined as in \eqref{f3} in Remark~\ref{dualp})
\begin{align}\label{lem5.1(i)}
\Big\| \Big( \sum_{\bj\in\N^{d}_{0}} 2^{\|\bj/\balpha\|_\infty sq} \Big| \sum_{\bk\in\Z^d} 2^{\|\bj\|_1} \langle f,h_{\bj,\bk} \rangle \chi_{\bj,\bk}(x)  \Big|^q \Big)^{1/q} \Big\|_p \lesssim \|f\|_{\F^{s,\balpha}_{p,q}} \,,
\end{align}
whenever the left-hand side is defined. In case
\: $|s|/\alpha_{\text{min}}< 1-\frac{1}{p}$ \: we have
\begin{align}\label{lem5.1(ii)}
\Big(  \sum_{\bj\in\N^{d}_{0}} 2^{\|\bj/\balpha\|_\infty sq} \Big\| \sum_{\bk\in\Z^d} 2^{\|\bj\|_1} \langle f,h_{\bj,\bk} \rangle \chi_{\bj,\bk}(\cdot)  \Big\|^q_p  \Big)^{1/q} \lesssim \|f\|_{\B^{s,\balpha}_{p,q}} \,.
\end{align}
\end{proposition}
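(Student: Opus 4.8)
The plan is to run the same overall scheme as in Step~2 of the proof of Theorem~\ref{main_wav} — expand the Haar coefficient through the reproducing identity \eqref{f3} and estimate the arising convolutions coordinatewise — but to replace the smooth bound of Lemma~\ref{conv2} (unavailable here, since $h$ violates condition (K)) by a sharper analysis exploiting that $h_{\bj,\bk}$ is a tensor product of (differences of) indicators of dyadic intervals. Taking the pairing $\langle f,h_{\bj,\bk}\rangle$ as defined in Remark~\ref{dualp}, with $\Theta_{\bj+\bl}=\cf^{-1}\theta_{\bj+\bl}$, $\Lambda_{\bj+\bl}=\cf^{-1}\lambda_{\bj+\bl}$ chosen so that each univariate factor $\Lambda_{n_i}$ with $n_i\ge 1$ has (infinitely many) vanishing moments, I first write
\[
2^{\|\bj\|_1}\langle f,h_{\bj,\bk}\rangle=\sum_{\bl\in\Z^d}2^{\|\bj\|_1}\big\langle\Theta_{\bj+\bl}\ast f,\Lambda_{\bj+\bl}\ast h_{\bj,\bk}\big\rangle ,
\]
and reduce both \eqref{lem5.1(i)} and \eqref{lem5.1(ii)} to controlling, for each fixed offset $\bl$, the block $g_{\bj}^{(\bl)}:=\sum_{\bk}2^{\|\bj\|_1}\langle\Theta_{\bj+\bl}\ast f,\Lambda_{\bj+\bl}\ast h_{\bj,\bk}\rangle\,\chi_{\bj,\bk}$ in $L_p(\ell_q)$ resp.\ $\ell_q(L_p)$, and then summing over $\bl$.

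The heart of the proof is a coordinatewise estimate of $\Lambda_{j_i+\ell_i}\ast h_{j_i,k_i}$ distinguishing two regimes. In the \emph{coarse} directions $\ell_i\le 0$ the single vanishing moment of $h$ together with the smoothness of $\Lambda$ yields, exactly as in Lemma~\ref{conv} with $N_-=L=1$, a polynomially localized bound carrying decay $2^{-|\ell_i|}$. In the \emph{fine} directions $\ell_i>0$, where smoothness of $h$ would be needed, I instead use that $h_{j_i,k_i}$ is locally constant: since $\Lambda_{j_i+\ell_i}$ has vanishing moments, the convolution cancels wherever $h_{j_i,k_i}$ is constant and is therefore concentrated, up to rapid tails, in a boundary layer of relative measure $2^{-\ell_i}$ around the (finitely many) jump points, on which it is merely $O(1)$ — with \emph{no} pointwise decay factor.

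Assembling the tensor product, summing over $\bk$ (the polynomial localization being absorbed by the Hardy--Littlewood maximal operator), one arrives at a pointwise bound essentially of the form
\[
\big|g_{\bj}^{(\bl)}\big|\lesssim\Big(\prod_{i:\ell_i\le0}2^{-|\ell_i|}\Big)\,E_{\bj}\big[\bbone_{B_{\bj}^{(\bl)}}\,|\Theta_{\bj+\bl}\ast f|\big] ,
\]
where $E_{\bj}g=\sum_{\bk}\big(2^{\|\bj\|_1}\int_{Q_{\bj,\bk}}g\big)\chi_{\bj,\bk}$ is the dyadic averaging operator at scale $\bj$ and $B_{\bj}^{(\bl)}$ is the fine boundary layer of relative measure $2^{-\|\bl_+\|_1}$. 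The crucial point is to \emph{keep} the indicator $\bbone_{B_{\bj}^{(\bl)}}$ rather than dominate it by $1$ — discarding it is precisely the lossy step hidden in Proposition~\ref{HL} (the factor $2^{\|\bl_+\|_1/r}$), which would destroy all decay for $s\ne0$. In the Besov case \eqref{lem5.1(ii)} one then takes the $L_p$-norm, and a direct application of Jensen's inequality inside each cube converts the layer's measure into the decay $2^{-\|\bl_+\|_1(1-1/p)}$. In the Triebel--Lizorkin case \eqref{lem5.1(i)} the analogous gain in $L_p(\ell_q)$, after using that $E_{\bj}\le M$ and the vector-valued Fefferman--Stein inequality (Theorem~\ref{feffstein}, available since $p,q>1$), is $2^{-\|\bl_+\|_1\min\{1-1/p,1-1/q\}}$. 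Finally, combining the coarse decay, the fine decay, and the weight comparison $2^{\|\bj/\balpha\|_\infty s-\|(\bj+\bl)/\balpha\|_\infty s}\le 2^{\|\bl\|_\infty|s|/\alpha_{\text{min}}}$ from \eqref{f7}, the (honest, since $\min\{p,q\}>1$) triangle inequality in $\bl$ reduces everything to a geometric series, which converges once the per-coordinate decay beats $2^{|\ell_i||s|/\alpha_{\text{min}}}$; the coarse constraint $|s|/\alpha_{\text{min}}<1$ is then dominated by the fine ones, giving exactly $|s|/\alpha_{\text{min}}<\min\{1-1/p,1-1/q\}$ in the $F$-case and $|s|/\alpha_{\text{min}}<1-1/p$ in the $B$-case.

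The main obstacle is the fine regime $\ell_i>0$. Because $h$ is discontinuous there is no pointwise convolution decay, only the spatial concentration of $\Lambda_{j_i+\ell_i}\ast h_{j_i,k_i}$ in a boundary layer; the entire mechanism hinges on converting the small measure $2^{-\|\bl_+\|_1}$ of that layer into genuine decay in $L_p(\ell_q)$. Obtaining the \emph{sharp} exponent $\min\{1-1/p,1-1/q\}$ — and no worse — is the delicate part, and it is precisely here that the hypotheses $p,q>1$ are indispensable (through the maximal inequalities) and that the admissible range of $s$ is pinned down.
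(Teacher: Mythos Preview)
Your outline is essentially correct and follows the same architecture as the paper: decompose $\langle f,h_{\bj,\bk}\rangle$ via the reproducing formula, estimate $\Lambda_{\bj+\bl}\ast h_{\bj,\bk}$ coordinatewise using the Haar vanishing moment in the coarse regime $\ell_i<0$ and the vanishing moments of $\Lambda$ (concentration near the jumps of $h$) in the fine regime $\ell_i\ge0$, and then sum geometrically over $\bl$ using \eqref{f7}.

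The packaging of the fine regime differs. The paper chooses $\Lambda$ with \emph{compact spatial support}, so that $\Lambda_{j_i+\ell_i}\ast h_{j_i,k_i}$ is genuinely supported in the boundary layer (no tails to manage), and then dominates the whole integrand by the Peetre maximal function $P_{2^{\bj+\bl},a}(\Phi_{\bj+\bl}\ast f)$ with a parameter $a\in(\max\{1/p,1/q\},1)$: the Peetre weight over the support costs $2^{a\ell_i}$, the layer integral contributes $2^{-\ell_i}$, and the net $2^{(a-1)\ell_i}$ is the decay. Your route keeps the indicator $\bbone_{B_{\bj}^{(\bl)}}$ and extracts the decay by H\"older inside each cube followed by Fefferman--Stein on $L_{p/r}(\ell_{q/r})$; with $1/r$ playing exactly the role of the paper's $a$, the two mechanisms are equivalent. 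The paper's version is a bit cleaner --- the compact support of $\Lambda$ removes the ``rapid tails'' you allude to, and the Peetre function handles coarse and fine directions uniformly --- while your phrasing makes the geometric origin of the decay (small measure of the layer) more transparent. One point to tighten in your $F$-case: the phrase ``after using $E_{\bj}\le M$ and Fefferman--Stein the gain is $2^{-\|\bl_+\|_1\min\{1-1/p,1-1/q\}}$'' hides the essential step, since Fefferman--Stein alone gives no decay; you must first apply H\"older inside each cube to get $E_{\bj}[\bbone_B|g|]\le 2^{-\|\bl_+\|_1(1-1/r)}(M|g|^r)^{1/r}$ for any $r<\min\{p,q\}$, and only then invoke Theorem~\ref{feffstein}.
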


\begin{proof}
For the proof, we first build a suitable decomposition 
of unity adapted to the hyperbolic tiling 
of the frequency domain. 
	For a respective construction, we start with univariate functions
	$\phi_0,\phi\in S(\R)$ and $\lambda_0,\lambda\in S(\R)$ such that
	\begin{align*}
	\lambda_0\phi_0 + \sum_{j\in\N} \lambda(2^{-j}\cdot)\phi(2^{-j}\cdot) = \sum_{j\in\N_0} \lambda_j\phi_j \equiv 1,
	\end{align*}
	where $\phi_j:=\phi(2^{-j}\cdot)$ and $\lambda_j:=\lambda(2^{-j}\cdot)$ for $j\in\N$. The functions $\phi_0$ and $\phi_1$ shall thereby, as usual, be compactly supported with
	\begin{align*}
	\supp(\phi_0) \subset \{ |x|\le 2\varepsilon \} \quad,\quad 
		\supp(\phi) \subset\{ \varepsilon/2\le|x|\le 2\varepsilon \} 
	\end{align*}
	for some $\varepsilon>0$.
	As a consequence, their inverse Fourier transforms however, namely $\Phi_0:={\mathcal{F}}^{-1} \phi_0$ and $\Phi:={\mathcal{F}}^{-1} \phi$,
	cannot have compact supports.
	
	The functions $\lambda_0$, $\lambda$, on the other hand are chosen such that
	the supports of
	$\Lambda_0:={\mathcal{F}}^{-1} \lambda_0$ and $\Lambda:={\mathcal{F}}^{-1} \lambda$ are compact. Further, they are assumed to fulfill the Tauberian conditions
	\begin{align*}
	|\lambda_0(x)|>0 \quad\text{on } \{ |x|\le 2\varepsilon \} \quad,\quad
	|\lambda(x)|>0 \quad\text{on } \{ \varepsilon/2\le|x|\le 2\varepsilon \}
	\end{align*}
	with the same $\varepsilon>0$ as above and furthermore $D^{\bgamma}\lambda(0)=0$ for multi-indices $\bgamma\in\N_0^{d}$ with $\|\bgamma\|_1\le1$. 
	Such a construction is indeed possible, see \cite[Lem.~3.6]{Ullrich2016TheRO}
	for example.

	 For the subsequent proof, it is convenient to also define the functions $\Phi_j:={\mathcal{F}}^{-1} \phi_j$ and $\Lambda_j:={\mathcal{F}}^{-1}\lambda_j$
	 for $j\in\N$.
	 They fulfill the scaling relations $\Phi_j=2^{j}\Phi(2^{j}\cdot)$ and $\Lambda_j=2^{j}\Lambda(2^{j}\cdot)$.
		
	Next, we put $\Phi_j:=0$ and $\Lambda_j:=0$ for $j\in\Z$ with $j<0$ and build the tensor products
	\begin{align*}
	\Phi_{\bl}:=\bigotimes_{i\in\{1,\ldots,d\}} \Phi_{\ell_i} \quad\text{and}\quad
	\Lambda_{\bl}:=\bigotimes_{i\in\{1,\ldots,d\}} \Lambda_{\ell_i}
	\quad\text{for $\bl\in\Z^{d}$}.
	\end{align*}
	
	Then we have the decomposition, which in fact is a discrete version of Calder\'on's reproducing formula,
	\begin{align*}
	f=\sum_{\bl\in\Z^{d}} \Phi_{\bl}\ast\Lambda_{\bl}\ast f \quad\text{for every }f\in S^\prime(\R^{d}) \,,
	\end{align*}
	enabling a component-wise evaluation of the scalar product $\langle f,h_{\bj,\bk} \rangle$. 
	Each Haar coefficient can in this way be understood in the following sense (see also Remark~\ref{dualp}),
    \begin{align*}
    \langle f,h_{\bj,\bk} \rangle = \sum_{\bl\in\Z^{d}} \langle  \Phi_{\bl}\ast\Lambda_{\bl}\ast f,
	h_{\bj,\bk} \rangle = \sum_{\bl\in\Z^{d}} \langle  \Phi_{\bl}\ast f,
	\Lambda_{\bl}(-\cdot)\ast h_{\bj,\bk} \rangle \,,
    \end{align*}
    whenever the right-hand sum converges.

    If we further assume that $\Lambda_{\bl}$ is even, we arrive at the estimate
	\begin{align*}
	|2^{\|\bj\|_1}\langle f,h_{\bj,\bk} \rangle|
	 \lesssim \sum_{\bl\in\Z^{d}} \Big| \int_{\R^{d}} 2^{\|\bj\|_1} (\Phi_{\bj+\bl}\ast f)(y) (\Lambda_{\bj+\bl}\ast h_{\bj,\bk})(y) \,dy \Big|.
	\end{align*}

	Let us investigate the integral on the right-hand side, and for this let us define
	\begin{align*}
	S_{\bj,\bk,\bl} :=\supp \big(\Lambda_{\bj+\bl}\ast h_{\bj,\bk}\big).
	\end{align*}
	If $\min_{i\in[d]}\{j_i+\ell_i\}<0$ 
	we have $S_{\bj,\bk,\bl}=\emptyset$ and the integral vanishes.
	Otherwise, when $\min_{i\in[d]}\{j_i+\ell_i\}\ge0$, we fix $a>0$ and $x\in\R^{d}$
	and obtain the estimate
	\begin{align*}
	\Big| &\int_{\R^{d}} 2^{\|\bj\|_1} (\Phi_{\bj+\bl}\ast f)(y) (\Lambda_{\bj+\bl}\ast h_{\bj,\bk})(y) \,dy \Big| \\
	&\le P_{2^{\bj+\bl},a}(\Phi_{\bj+\bl}\ast f)(x) \cdot \sup_{y\in S_{\bj,\bk,\bl}} \Big[ \prod_{i=1}^{d} (1+2^{j_i+\ell_i}|x_i-y_i|)^a \Big]
	\cdot \Big|\int_{\R^{d}} 2^{\|\bj\|_1}  (\Lambda_{\bj+\bl}\ast h_{\bj,\bk})(z) \,dz\Big| \,,
	\end{align*}
	where $P_{2^{\bj+\bl},a}(\Phi_{\bj+\bl}\ast f)$ denotes the Peetre maximal function (see~\eqref{petfefste})
	\begin{align*}
	P_{2^{\bj+\bl},a}(\Phi_{\bj+\bl}\ast f)(x) = \sup_{y\in\R^{d}} \frac{|(\Phi_{\bj+\bl}\ast f)(y)|}{(1+2^{j_1+\ell_1}|x_1-y_1|)^a\cdots(1+2^{j_d+\ell_d}|x_d-y_d|)^a}.
	\end{align*}
	
	The integral term splits into
    \begin{align*}
	 \int_{\R^{d}} 2^{\|\bj\|_1}  (\Lambda_{\bj+\bl}\ast h_{\bj,\bk})(z) \,dz =  2^{\|\bj\|_1} \int_{\R} (\Lambda_{j_1+\ell_1}\ast h_{j_1,k_1})(t)\,dt  \cdots \int_{\R} (\Lambda_{j_d+\ell_d}\ast h_{j_d,k_d})(t)\,dt
	\end{align*}
    according to the relation
	\begin{align*}
	\Lambda_{\bj+\bl}\ast h_{\bj,\bk} = (\Lambda_{j_1+\ell_1}\ast h_{j_1,k_1}) \otimes \cdots \otimes (\Lambda_{j_d+\ell_d}\ast h_{j_d,k_d}).
	\end{align*}
	
	For fixed $i\in\{1,\ldots,d\}$, assuming $\ell_i<0$, we can then further estimate
	\begin{align}\label{auxformula1}
	\int_{\R} (\Lambda_{j_i+\ell_i}\ast h_{j_i,k_i})(y) \,dy
	\lesssim 2^{-j_i+\ell_i}
	\end{align}
	since $|\supp(\Lambda_{j_i+\ell_i}\ast h_{j_i,k_i})| \asymp 2^{-(j_i+\ell_i)}$ and $\| \Lambda_{j_i+\ell_i}\ast h_{j_i,k_i} \|_\infty \lesssim 2^{2\ell_i}$.
	For the latter of these two inequalities the first order vanishing moment of the Haar wavelet comes into play. Note here that indeed $j_i>0$ due to $\min_{i\in[d]}\{j_i+\ell_i\}\ge0$, allowing for the estimate
	\begin{align*}
	\| \Lambda_{j_i+\ell_i}\ast h_{j_i,k_i}\|_\infty &= \| (\Lambda_{j_i+\ell_i}-\Lambda_{j_i+\ell_i}(2^{-j}k_i))\ast h_{j_i,k_i}\|_\infty  \\
    &\le \|\Lambda_{j_i+\ell_i}-\Lambda_{j_i+\ell_i}(2^{-j}k_i)\|_\infty \|h_{j_i,k_i}\|_1
	\lesssim 2^{2\ell_i}.
	\end{align*}

    In case $\ell_i\ge 0$ we obtain a different estimate than \eqref{auxformula1}, namely
	\begin{align*}
	\int_{\R} (\Lambda_{j_i+\ell_i}\ast h_{j_i,k_i})(y) \,dy
	\lesssim 2^{-(j_i+\ell_i)}.
	\end{align*}
	Here we use the fact that the integrand is bounded by a constant together with the observation that its support is contained in
    at most three intervals of length $\asymp 2^{-(j_i+\ell_i)}$.
    Indeed, as a consequence of the $L_1$- resp. $L_\infty$-normalization of $\Lambda_{j_i+\ell_i}$ and $h_{j_i,k_i}$, we have
    $\|\Lambda_{j_i+\ell_i}\ast h_{j_i,k_i}\|_\infty\le \|\Lambda_{j_i+\ell_i}\|_1 \|h_{j_i,k_i}\|_\infty \lesssim 1$.
    Furthermore, due to the vanishing moment properties of $\Lambda_{j_i+\ell_i}$,
    the support of the convolution merely stems from the either two or three discontinuities of the function $h_{j_i,k_i}$.

	Now, let us turn our attention to the factor
	\begin{align*}
	\sup_{y\in S_{\bj,\bk,\bl}} \Big[ \prod_{i=1}^{d} (1+2^{j_i+\ell_i}|x_i-y_i|)^a \Big].
	\end{align*}
	Here, we have with $x_i\in Q_{j_i,k_i}$ and $y_i\in \supp (\Lambda_{j_i+\ell_i}\ast h_{j_i,k_i}) 
	\subset  \supp (\Lambda_{j_i+\ell_i}) + \supp(h_{j_i,k_i})$ (and therefore $|x_i-y_i|\lesssim 2^{-j_i}$ if $\ell_i\ge 0$ and $|x_i-y_i|\lesssim 2^{-(j_i+\ell_i)}$ if $\ell_i<0$)
	\begin{align*}
	(1+2^{j_i+\ell_i}|x_i-y_i|)^a \lesssim 1
	\end{align*}
	if $\ell_i<0$. Otherwise, if $\ell_i\ge0$, we estimate
	\begin{align*}
	(1+2^{j_i+\ell_i}|x_i-y_i|)^a \lesssim 2^{\ell_ia}.
	\end{align*}
	
	Putting all together, this yields for $x\in Q_{\bj,\bk}$
	\begin{align*}
	\Big| \int_{\R^{d}} 2^{\|\bj\|_1} (\Phi_{\bj+\bl}\ast f)(y) (\Lambda_{\bj+\bl}\ast h_{\bj,\bk})(y) \,dy \Big| \lesssim
	A(\bl,a) P_{2^{\bj+\bl},a}(\Phi_{\bj+\bl}\ast f)(x) \,,
	\end{align*}
	where
	\begin{align*}
	A(\bl,a):=\prod_{i\in\{1,\ldots,d\}} A(\ell_i,a) \quad\text{with}\quad
	A(\ell_i,a):= \begin{cases} 2^{\ell_i} \quad &, \ell_i<0 \\ 2^{(a-1)\ell_i} &, \ell_i\ge 0 \end{cases}.
	\end{align*}
	
	Hence, we obtain uniformly in $x\in\R^d$ and for fixed $\bj\in\N^{d}_{0}$
	\begin{align*}
	\sum_{\bk\in\Z^d} |2^{\|\bj\|_1} \langle f,h_{\bj,\bk} \rangle| \chi_{\bj,\bk} (x) \lesssim
	\sum_{\bl\in\Z^{d}} A(\bl,a) P_{2^{\bj+\bl},a}(\Phi_{\bj+\bl}\ast f) (x).
	\end{align*}
	
	Finally, we can turn to the proof of \eqref{lem5.1(i)}. 
	We estimate
	\begin{align*}
	\Big\| &\Big( \sum_{\bj\in\N^{d}_{0}} 2^{\|\bj/\balpha\|_\infty sq} \Big| \sum_{\bk\in\Z^d} 2^{\|\bj\|_1} \langle f,h_{\bj,\bk} \rangle \chi_{\bj,\bk}(x)  \Big|^q \Big)^{1/q} \Big\|_p \\
	&\lesssim  \Big\| \Big( \sum_{\bj\in\N^{d}_{0}} 2^{\|\bj/\balpha\|_\infty sq} \Big| \sum_{\bl\in\Z^{d}} A(\bl,a) P_{2^{\bj+\bl},a}(\Phi_{\bj+\bl}\ast f) (x)  \Big|^q \Big)^{1/q} \Big\|_p \\
	&=  \Big\| \Big( \sum_{\bj\in\N^{d}_{0}}    \Big| \sum_{\bl\in\Z^{d}} A(\bl,a) 2^{(\|\bj/\balpha\|_\infty - \|(\bj+\bl)/\balpha\|_\infty)s}  2^{\|(\bj+\bl)/\balpha\|_\infty s}  P_{2^{\bj+\bl},a}(\Phi_{\bj+\bl}\ast f) (x)  \Big|^q \Big)^{1/q} \Big\|_p .
	\end{align*}
	According to \eqref{f7} it holds $2^{(\|\bj/\balpha\|_\infty -\|(\bj+\bl)/\balpha\|_\infty)s}\le 2^{\|\bl/\balpha\|_\infty |s|} $ for $s\in\R$, and hence
    \begin{align*}
	\Big\| &\Big( \sum_{\bj\in\N^{d}_{0}} 2^{\|\bj/\balpha\|_\infty sq} \Big| \sum_{\bk\in\Z^d} 2^{\|\bj\|_1} \langle f,h_{\bj,\bk} \rangle \chi_{\bj,\bk}(x)  \Big|^q \Big)^{1/q} \Big\|_p \\
	&\lesssim  \Big\| \Big( \sum_{\bj\in\N^{d}_{0}}    \Big| \sum_{\bl\in\Z^{d}} A(\bl,a)  2^{\|\bl/\balpha\|_\infty |s|}  2^{\|(\bj+\bl)/\balpha\|_\infty s}  P_{2^{\bj+\bl},a}(\Phi_{\bj+\bl}\ast f) (x)  \Big|^q \Big)^{1/q} \Big\|_p \\
	&\le  \sum_{\bl\in\Z^{d}} A(\bl,a) 2^{\|\bl/\balpha\|_\infty |s|} \cdot \Big\| \Big( \sum_{\bj\in\N^{d}_{0}}  \Big| 2^{\|\bj/\balpha\|_\infty s}  P_{2^{\bj},a}(\Phi_{\bj}\ast f)(x) \Big|^q  \Big)^{1/q} \Big\|_p \,,
	\end{align*}
    where Young's convolution inequality was used in the last step.

    For \eqref{lem5.1(ii)} we argue analogously, namely
    \begin{align*}
    \Big( &\sum_{\bj\in\N^{d}_{0}} 2^{\|\bj/\balpha\|_\infty sq} \Big\| \sum_{\bk\in\Z^d} 2^{\|\bj\|_1} \langle f,h_{\bj,\bk} \rangle \chi_{\bj,\bk}(\cdot)  \Big\|_p^q \Big)^{1/q}  \\
    &\lesssim  \Big( \sum_{\bj\in\N^{d}_{0}} 2^{\|\bj/\balpha\|_\infty sq} \Big\| \sum_{\bl\in\Z^{d}} A(\bl,a) P_{2^{\bj+\bl},a}(\Phi_{\bj+\bl}\ast f)(\cdot)  \Big\|_p^q \Big)^{1/q}  \\
    &\le  \Big( \sum_{\bj\in\N^{d}_{0}}    \Big\| \sum_{\bl\in\Z^{d}} A(\bl,a) 2^{\|\bl/\balpha\|_\infty|s|}  2^{\|(\bj+\bl)/\balpha\|_\infty s}  P_{2^{\bj+\bl},a}(\Phi_{\bj+\bl}\ast f)(\cdot)  \Big\|_p^q \Big)^{1/q} \\
    &\le  \sum_{\bl\in\Z^{d}} A(\bl,a) 2^{\|\bl/\balpha\|_\infty|s|} \cdot \Big( \sum_{\bj\in\N^{d}_{0}}    \Big\|    2^{\|\bj/\balpha\|_\infty s}  P_{2^{\bj},a}(\Phi_{\bj}\ast f)(\cdot)  \Big\|_p^q \Big)^{1/q}.
    \end{align*}

    Choosing $a>\max\{1/p,1/q\}$ in the F-case and $a>1/p$ in the B-case, to ensure the boundedness of the Peetre maximal operator (see Theorem~\ref{peetremax}, also compare e.g.~\cite[Thm.~2.6]{Ul10}), as well as $|s|<\min\limits_{i\in\{1,\ldots,d\}}\{\alpha_i\}(1-a)$
    we get
    \begin{align*}
    \sum_{\bl\in\Z^{d}} A(\bl,a) 2^{\|\bl/\balpha\|_\infty |s|} <\infty
    \end{align*}
    and thus \eqref{lem5.1(i)} and \eqref{lem5.1(ii)}, respectively.
\end{proof}

Using a duality argument, we can deduce an immediate companion result.

\begin{proposition}\label{lemcompanion}
	Let $1<p,q<\infty$, $s\in\R$, and $\balpha=(\alpha_1,\ldots,\alpha_d)>0$ such that $\sum_{i=1}^{d}\alpha_i=d$.
	Under the condition
	\begin{align*}
	|s|/\alpha_{\text{min}}<
	 \min\Big\{\frac{1}{p},\frac{1}{q}\Big\}
	\end{align*}
	we have for all $f\in S^\prime(\R^d)$ (with the dual pairing $\langle  f,h_{\bj,\bk} \rangle$ defined as in \eqref{f3} in Remark~\ref{dualp})
	\begin{align}\label{duality}
	\|f\|_{\F^{s,\balpha}_{p,q}} \lesssim
	\Big\| \Big( \sum_{\bj\in\N^{d}_{0}} 2^{\|\bj/\balpha\|_\infty sq} \Big| \sum_{\bk\in\Z^d} 2^{\|\bj\|_1} \langle f,h_{\bj,\bk} \rangle \chi_{\bj,\bk}(x) \Big|^q \Big)^{1/q} \Big\|_p \,,
	\end{align}
	whenever the right-hand side is defined. In case \: $|s|/\alpha_{\text{min}}<1/p$ \: we have
		\begin{align}\label{duality2}
	\|f\|_{\B^{s,\balpha}_{p,q}} \lesssim
	\Big( \sum_{\bj\in\N^{d}_{0}} 2^{\|\bj/\balpha\|_\infty sq} \Big\| \sum_{\bk\in\Z^d} 2^{\|\bj\|_1} \langle f,h_{\bj,\bk} \rangle \chi_{\bj,\bk}(\cdot) \Big\|_p^q \Big)^{1/q} \,.
	\end{align}
\end{proposition}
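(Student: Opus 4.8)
The plan is to deduce both estimates from Proposition~\ref{lem5.1} by a standard duality argument, exploiting that in the reflexive range $1<p,q<\infty$ the \emph{analysis} estimate of Proposition~\ref{lem5.1} for the flipped parameters $(p',q',-s)$ is precisely the dual of the desired \emph{synthesis-type} lower bound for $(p,q,s)$. First I would record the duality of the spaces involved. By Theorem~\ref{main_wav} a sufficiently smooth wavelet basis (choosing $K,L$ large enough, which is always possible) provides an isomorphism of $\F^{s,\balpha}_{p,q}(\R^d)$ onto the sequence space $\tilde{f}^{s,\balpha}_{p,q}$, realizing $\F^{s,\balpha}_{p,q}(\R^d)$ as a complemented subspace of a weighted $L_p(\ell_q)$-space. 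Since $1<p,q<\infty$ this space is reflexive and its dual is the corresponding subspace of the weighted $L_{p'}(\ell_{q'})$-space, whose duality pairing is the elementary one; transporting back one obtains $\big(\F^{s,\balpha}_{p,q}(\R^d)\big)'=\F^{-s,\balpha}_{p',q'}(\R^d)$ with respect to the $L_2(\R^d)$-pairing, and hence
\[
\|f\|_{\F^{s,\balpha}_{p,q}} \asymp \sup\Big\{ |\langle f,g\rangle| ~:~ g\in \S(\R^d),\ \|g\|_{\F^{-s,\balpha}_{p',q'}}\le 1 \Big\}\,.
\]
The same reasoning yields $\big(\B^{s,\balpha}_{p,q}(\R^d)\big)'=\B^{-s,\balpha}_{p',q'}(\R^d)$ for the Besov scale.

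Next I would expand the pairing $\langle f,g\rangle$ in the hyperbolic Haar system. Since the functions $2^{\|\bj\|_1/2}h_{\bj,\bk}$ form an orthonormal basis of $L_2(\R^d)$, Parseval's identity gives (for $g$ in the dense set, after justifying the interchange with the dual pairing~\eqref{f3})
\[
\langle f,g\rangle = \sum_{\bj\in\N_0^d}\sum_{\bk\in\Z^d} 2^{\|\bj\|_1}\langle f,h_{\bj,\bk}\rangle\,\overline{\langle g,h_{\bj,\bk}\rangle}\,.
\]
Writing $\lambda_{\bj,\bk}:=2^{\|\bj\|_1}\langle f,h_{\bj,\bk}\rangle$ and $\mu_{\bj,\bk}:=2^{\|\bj\|_1}\langle g,h_{\bj,\bk}\rangle$, the right-hand side equals $\sum_{\bj,\bk} \lambda_{\bj,\bk}\overline{\mu_{\bj,\bk}}\,2^{-\|\bj\|_1}$. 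This is exactly the canonical bilinear pairing between $\tilde{f}^{s,\balpha}_{p,q}$ and $\tilde{f}^{-s,\balpha}_{p',q'}$: inserting the characteristic-function representatives, the smoothness weights $2^{\pm\|\bj/\balpha\|_\infty s}$ cancel and the overlap $\int\chi_{\bj,\bk}\chi_{\bj,\bk'}=2^{-\|\bj\|_1}\delta_{\bk,\bk'}$ reproduces precisely the factor $2^{-\|\bj\|_1}$ occurring from the Parseval normalization. Hölder's inequality for $L_p(\ell_q)$ (resp.\ $\ell_q(L_p)$ in the Besov case) therefore yields
\[
|\langle f,g\rangle| \le \|\lambda\|_{\tilde{f}^{s,\balpha}_{p,q}}\,\|\mu\|_{\tilde{f}^{-s,\balpha}_{p',q'}}\,.
\]

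Finally I would bound $\|\mu\|_{\tilde{f}^{-s,\balpha}_{p',q'}}$ by applying Proposition~\ref{lem5.1} to $g$ with the flipped parameters $(p',q',-s)$. Its hypothesis $|{-s}|/\alpha_{\text{min}}<\min\{1-1/p',1-1/q'\}=\min\{1/p,1/q\}$ is exactly the assumption of the present proposition, so $\|\mu\|_{\tilde{f}^{-s,\balpha}_{p',q'}}\lesssim\|g\|_{\F^{-s,\balpha}_{p',q'}}$. Combining the last three displays and taking the supremum over admissible $g$ gives~\eqref{duality}; the Besov estimate~\eqref{duality2} follows in the same way, now invoking~\eqref{lem5.1(ii)} together with the $\ell_q(L_p)$-duality, whose hypothesis flips to $|s|/\alpha_{\text{min}}<1-1/p'=1/p$. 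The main obstacle is not the duality bookkeeping but the two rigor points underlying it: establishing the space duality for the hyperbolic scales (most cleanly via the wavelet isomorphism of Theorem~\ref{main_wav}, avoiding external references) and justifying the Parseval expansion of the dual pairing in the Haar system, i.e.\ the convergence of the Haar series of $g$ in $\F^{-s,\balpha}_{p',q'}$ and its compatibility with the pairing~\eqref{f3}. This is where the restriction $1<p,q<\infty$ and the density of $\S(\R^d)$ are genuinely used.
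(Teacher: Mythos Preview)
Your proposal is correct and follows essentially the same duality strategy as the paper: apply Proposition~\ref{lem5.1} with the conjugate parameters $(p',q',-s)$, use the identifications $\big(\F^{s,\balpha}_{p,q}\big)'=\F^{-s,\balpha}_{p',q'}$ and $\big(\tilde f^{s,\balpha}_{p,q}\big)'=\tilde f^{-s,\balpha}_{p',q'}$ (which the paper also derives via the wavelet isomorphism of Theorem~\ref{main_wav}, recorded as Theorem~\ref{theodual}), and match the Haar-coefficient pairing with the sequence-space duality. The paper merely packages this as the adjoint of the analysis operator $A:f\mapsto(2^{\|\bj\|_1}\langle f,h_{\bj,\bk}\rangle)_{\bj,\bk}$, obtaining a bounded synthesis map $A'$ and then showing that $f$ coincides with its Haar synthesis via weak$^*$ convergence in $\mathcal S'$---this identification is exactly the Parseval rigor point you flag.
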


\begin{proof}
We showed in Proposition~\ref{lem5.1} (i) that the linear operator
\begin{align*}
A:~&\F^{s,\balpha}_{p,q}\to \tilde{f}^{s,\balpha}_{p,q} \,,\quad f \mapsto ( 2^{\|\bj\|_1} \langle f,h_{\bj,\bk}\rangle)_{\bj,\bk} \,,
\end{align*}
is well-defined and bounded in the parameter range
\[
|s|<\min\limits_{i\in\{1,\ldots,d\}}\{\alpha_i\} \min\Big\{1-\frac{1}{p},1-\frac{1}{q}\Big\}.
\]

Consequently, in this range, the dual operator
\[
A': \big(\tilde{f}^{s,\balpha}_{p,q}\big)^\prime \to \big(\F^{s,\balpha}_{p,q} \big)^\prime
\]
is also well-defined and bounded. Identifying
$\big(\tilde{f}^{s,\balpha}_{p,q}\big)^\prime$ with $\tilde{f}^{-s,\balpha}_{p^{\prime},q^{\prime}}$ with respect to the
non-standard duality product
\begin{align}\label{eqdef:nonstdprod}
\langle (\lambda_{\bj,\bk}), (\mu_{\bj,\bk}) \rangle := \sum_{\bj\in\N_0^{d}} 2^{-\|\bj\|_1} \sum_{\bk\in\Z^{d}} \lambda_{\bj,\bk}\mu_{\bj,\bk} \,,
\end{align}
which is possible according to Theorem~\ref{theodual} (i) below, it can be represented in the form
\begin{equation}\nonumber
A':(\lambda_{\bj,\bk})_{\bj,\bk} \mapsto \sum\limits_{\bj,\bk}  \lambda_{\bj,\bk} h_{\bj,\bk}\,,
\end{equation}
where the convergence is weak*ly in $\big(\F^{s,\balpha}_{p,q}\big)^\prime$.
This is a consequence of the relation
\begin{equation}\nonumber
\begin{split}
\langle Af, g'\rangle_{Y\times Y'} &= \sum\limits_{\bj\in\N_0^{d}} 2^{-\|\bj\|_1}  \sum\limits_{\bk\in\Z^{d}} 2^{\|\bj\|_1}\langle f,h_{\bj,\bk}\rangle\cdot \lambda_{\bj,\bk}\\
&= \int_{\R^{d}} f(x) \cdot \Big(\sum\limits_{\bj\in\N_0^{d}}\sum\limits_{\bk\in\Z^{d}}  \lambda_{\bj,\bk} h_{\bj,\bk}(x)\Big) \,dx\\
&= \langle f, A'g'\rangle_{X\times X'}\,,
\end{split}
\end{equation}
where we used the short-hand notation $Y=\tilde{f}^{s,\balpha}_{p,q}$ 
and $X=\F^{s,\balpha}_{p,q}$.

Invoking Theorem~\ref{theodual}(i) another time, $X'$ can be identified with $\F^{-s,\balpha}_ {p',q'}$
(and $X''$ with $\F^{s,\balpha}_ {p,q}$). Then
for $(\lambda_{\bj,\bk})_{\bj,\bk}\in\tilde{f}^{-s,\balpha}_{p',q'}$ and some enumeration
\[
(\lambda_{\bj,\bk})_{\bj,\bk} \quad\rightsquigarrow\quad (\lambda_n)_{n\in\N} 
\]
we have $\|(\lambda_n)_{n\ge N}\|_{\tilde{f}^{-s,\balpha}_{p',q'}}\to 0$
for $N\to\infty$. We estimate
\begin{align*}
\| A^\prime\big( (\lambda_n)_{n\ge N} \big) \|_{\F^{-s,\balpha}_{p',q'}}
&= \sup_{\|f\|_{\F^{s,\balpha}_{p,q}}=1} |\langle A'\big( (\lambda_n)_{n\ge N} \big), f \rangle|
= \sup_{\|f\|_{\F^{s,\balpha}_{p,q}}=1} |\langle  (\lambda_n)_{n\ge N}, Af \rangle| \\
&\le \sup_{\|f\|_{\F^{s,\balpha}_{p,q}}=1}  \|(\lambda_n)_{n\ge N} \|_{\tilde{f}^{-s,\balpha}_{p',q'}}  \| Af \|_{\tilde{f}^{s,\balpha}_{p,q}} \lesssim  \|(\lambda_n)_{n\ge N} \|_{\tilde{f}^{-s,\balpha}_{p',q'}} \to 0 \quad(N\to\infty).
\end{align*}
Hence, $ A^\prime\big( (\lambda_n)_{n\le N}\big) \to A^\prime\big( (\lambda_n)_{n\in\N}\big)$ strongly and unconditionally in $\F^{-s,\balpha}_{p',q'}$.

In other words, we have shown that if $|s|<\min\limits_{i\in\{1,\ldots,d\}}\{\alpha_i\} \min\{1/p,1/q\}$ and $(\lambda_{\bj,\bk})_{\bj,\bk}\in\tilde{f}^{s,\balpha}_{p,q}$ then
$\sum\limits_{\bj,\bk} \lambda_{\bj,\bk}h_{\bj,\bk}$ converges strongly and unconditionally in $\F^{s,\balpha}_{p,q}$ and
\begin{align}\label{boundedness2}
\Big\|\sum\limits_{\bj,\bk}  \lambda_{\bj,\bk}h_{\bj,\bk}\Big\|_{\F^{s,\balpha}_{p,q}} \lesssim \|(\lambda_{\bj,\bk})_{\bj,\bk}\|_{\tilde{f}^{s,\balpha}_{p,q}}\,.
\end{align}

Hence, choosing $\lambda_{\bj,\bk}:= 2^{\|\bj\|_1}\langle f,h_{\bj,\bk}\rangle$ and assuming a finite sequence norm $\|(\lambda_{\bj,\bk})_{\bj,\bk}\|_{\tilde{f}^{s,\balpha}_{p,q}}$, then
$\sum_{\bj,\bk} \lambda_{\bj,\bk}h_{\bj,\bk}=\sum_{\bj,\bk} 2^{\|\bj\|_1}\langle f,h_{\bj,\bk}\rangle h_{\bj,\bk}$ converges strongly to some limit $\tilde{f}\in\F^{s,\balpha}_{p,q}(\R^{d})$.
Since this sum also converges (weak*ly) in $S'(\R)$ to $f$ we have
$f = \tilde{f} = \sum_{\bj,\bk} 2^{\|\bj\|_1}\langle f,h_{\bj,\bk}\rangle h_{\bj,\bk}$ in $\F^{s,\balpha}_{p,q}$.
Now \eqref{duality} follows from \eqref{boundedness2}.

The proof of \eqref{duality2} works the same, using Proposition~\ref{lem5.1} (ii) and Theorem~\ref{theodual} (ii).
\end{proof}

Combining both, Proposition~\ref{lem5.1} and Proposition~\ref{lemcompanion}, we arrive at the following
proposition, whereby we now concentrate on the F-case. 

\begin{proposition}\label{prop_combo}
Let $1<p,q<\infty$, $s\in\R$, and $\balpha=(\alpha_1,\ldots,\alpha_d)>0$ such that $\sum_{i=1}^{d}\alpha_i=d$.
Under the condition
\begin{align*}
|s|/\alpha_{\text{min}}<
 \min\Big\{\frac{1}{p},\frac{1}{q},1-\frac{1}{p},1-\frac{1}{q}\Big\}.
\end{align*}
we have for all $f\in S^\prime(\R^d)$ (with the dual pairing $\langle  f,h_{\bj,\bk} \rangle$ defined as in \eqref{f3} in Remark~\ref{dualp})
\begin{align}\label{equivHaar}
\Big\| \Big( \sum_{\bj\in\N^{d}_{0}} 2^{\|\bj/\balpha\|_\infty sq} \Big| \sum_{\bk\in\Z^d} 2^{\|\bj\|_1} \langle f,h_{\bj,\bk} \rangle \chi_{\bj,\bk}(x) \Big|^q \Big)^{1/q} \Big\|_p \asymp \|f\|_{\F^{s,\balpha}_{p,q}} \,,
\end{align}
whenever the left-hand side is defined.
\end{proposition}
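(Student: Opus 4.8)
The plan is simply to combine the two one-sided estimates already established in Proposition~\ref{lem5.1} and Proposition~\ref{lemcompanion}, after observing that their respective ranges of validity intersect in exactly the parameter region stated here. The equivalence $\asymp$ in \eqref{equivHaar} unfolds into two separate inequalities, and each of these has been handled in one of the two preceding propositions, with the same dual pairing $\langle f,h_{\bj,\bk}\rangle$ in the sense of \eqref{f3}.

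For the upper bound, namely
$$\Big\| \Big( \sum_{\bj\in\N^{d}_{0}} 2^{\|\bj/\balpha\|_\infty sq} \Big| \sum_{\bk\in\Z^d} 2^{\|\bj\|_1} \langle f,h_{\bj,\bk} \rangle \chi_{\bj,\bk}(x) \Big|^q \Big)^{1/q} \Big\|_p \lesssim \|f\|_{\F^{s,\balpha}_{p,q}},$$
I would invoke Proposition~\ref{lem5.1}(i), whose hypothesis is $|s|/\alpha_{\text{min}} < \min\{1-1/p,\,1-1/q\}$. For the reverse bound $\|f\|_{\F^{s,\balpha}_{p,q}} \lesssim (\cdots)$, I would invoke Proposition~\ref{lemcompanion}, valid under $|s|/\alpha_{\text{min}} < \min\{1/p,\,1/q\}$. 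Both estimates are then chained directly, the implicit constant in \eqref{equivHaar} being controlled by those appearing in the two propositions.

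The two bounds hold simultaneously precisely when both threshold conditions are met, that is when
$$|s|/\alpha_{\text{min}} < \min\Big\{\frac{1}{p},\,\frac{1}{q},\,1-\frac{1}{p},\,1-\frac{1}{q}\Big\},$$
which is exactly the hypothesis of the proposition. There is no genuine obstacle at this stage: the substantive work, namely the convolution estimates exploiting the first-order vanishing moment of the Haar wavelet in Proposition~\ref{lem5.1}, and the duality argument together with the identification $(\tilde{f}^{s,\balpha}_{p,q})^\prime \cong \tilde{f}^{-s,\balpha}_{p',q'}$ in Proposition~\ref{lemcompanion}, has already been carried out. The only point requiring a moment of care is to verify that the four threshold quantities are correctly intersected and that the left-hand side of \eqref{equivHaar} is indeed well defined throughout, so that the two estimates may be applied to the same quantity without reinterpreting the coefficients $2^{\|\bj\|_1}\langle f,h_{\bj,\bk}\rangle$.
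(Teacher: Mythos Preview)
Your proposal is correct and matches the paper's approach exactly: the paper does not even give a formal proof for this proposition, merely introducing it with the sentence ``Combining both, Proposition~\ref{lem5.1} and Proposition~\ref{lemcompanion}, we arrive at the following proposition,'' which is precisely the intersection-of-ranges argument you describe.
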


As a direct consequence of this result, we can finally formulate the main theorem of this section
which corresponds to Theorem~\ref{main_wav}.

\begin{theorem}\label{thm:haarmain}
	Let $1<p,q<\infty$, $s\in\R$, and $\balpha=(\alpha_1,\ldots,\alpha_d)>0$ such that $\sum_{i=1}^{d}\alpha_i=d$.
	Further, assume
	\begin{align}\label{scond}
	|s|/\alpha_{\text{min}}<
	 \min\Big\{\frac{1}{p},\frac{1}{q},1-\frac{1}{p},1-\frac{1}{q}\Big\}.
	\end{align}
	Then the Haar system $\mathcal{H}_{d}=(h_{\bj,\bk})_{\bj,\bk}$ defined in \eqref{HaarSd} constitutes an unconditional basis of $\F^{s,\balpha}_{p,q}(\R^d)$ with associated sequence space
	$\tilde{f}^{s,\balpha}_{p,q}$.
	The unique sequence of basis coefficients for $f\in\F^{s,\balpha}_{p,q}(\R^d)$
	is determined by $\lambda:=\lambda(f)=(\lambda_{\bj,\bk})_{\bj,\bk}$ with
	\begin{equation}\label{coeffHaar}
	\lambda_{\bj,\bk}:=\lambda_{\bj,\bk}(f) = 2^{\|\bj\|_1}\langle f , h_{\bj,\bk}\rangle \,.
	\end{equation}
	Further, we have the wavelet isomorphism (equivalent norm)
	$$
	\|f\|_{\F^{s,\balpha}_{p,q}(\R^d)} \asymp \|\lambda(f)\|_{\tilde{f}^{s,\balpha}_{p,q}}\quad,\quad f\in
	\F^{s,\balpha}_{p,q}(\R^d)\,.
	$$
	
	In addition, we can use $\mathcal{H}_{d}$ to distinguish those elements of
	$\mathcal{S}'(\R^d)$ that belong to $\F^{s,\balpha}_{p,q}(\R^d)$.
	Those are characterized by either of the following two criteria:
	
	\begin{enumerate}[(i)]
		\item  
	    $f$ can be represented as a sum
		\begin{equation}\label{sumHaar}
		f = \sum\limits_{\bj\in \N_0^d} \sum\limits_{\bk\in \Z^d} \lambda_{\bj,\bk} h_{\bj,\bk} \quad\text{converging (weak*ly) in $\mathcal{S}'(\R^d)$}
		\end{equation}
		with coefficients $(\lambda_{\bj,\bk})_{\bj,\bk} \in \tilde{f}^{s,\balpha}_{p,q}$ (with respect to some chosen ordering).
		\item 
		With $\lambda(f)$ being defined as in \eqref{coeffHaar}, it holds
		\begin{equation*}
		\lambda(f)=(\lambda_{\bj,\bk})_{\bj,\bk} \in\tilde{f}^{s,\balpha}_{p,q}.
		\end{equation*}
	\end{enumerate}
	In both cases, the sequence $(\lambda_{\bj,\bk})_{\bj,\bk}$ is necessarily the sequence of basis coefficients and the representation \eqref{sumHaar} converges
	unconditionally to $f$ in $\F^{s,\balpha}_{p,q}(\R^d)$.
\end{theorem}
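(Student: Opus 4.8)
The plan is to assemble the statement from the three preparatory results, Propositions~\ref{lem5.1}, \ref{lemcompanion} and~\ref{prop_combo}, which already carry all the analytic substance; what remains is essentially the bookkeeping of two mutually inverse operators together with a uniqueness argument. First I would observe that the asserted norm equivalence $\|f\|_{\F^{s,\balpha}_{p,q}}\asymp\|\lambda(f)\|_{\tilde{f}^{s,\balpha}_{p,q}}$ is merely a reformulation of \eqref{equivHaar} in Proposition~\ref{prop_combo}: with $\lambda_{\bj,\bk}=2^{\|\bj\|_1}\langle f,h_{\bj,\bk}\rangle$ as in \eqref{coeffHaar}, the right-hand side of \eqref{equivHaar} equals exactly $\|\lambda(f)\|_{\tilde{f}^{s,\balpha}_{p,q}}$, and condition \eqref{scond} is precisely the hypothesis of Proposition~\ref{prop_combo}.

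Next I would introduce the analysis and synthesis operators $A\colon f\mapsto(2^{\|\bj\|_1}\langle f,h_{\bj,\bk}\rangle)_{\bj,\bk}$ and $S\colon(\lambda_{\bj,\bk})_{\bj,\bk}\mapsto\sum_{\bj,\bk}\lambda_{\bj,\bk}h_{\bj,\bk}$. Boundedness of $A\colon\F^{s,\balpha}_{p,q}\to\tilde{f}^{s,\balpha}_{p,q}$ is Proposition~\ref{lem5.1}(i), while boundedness of $S\colon\tilde{f}^{s,\balpha}_{p,q}\to\F^{s,\balpha}_{p,q}$ together with unconditional convergence of the defining series is the content of estimate \eqref{boundedness2} in the proof of Proposition~\ref{lemcompanion}. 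I would then verify that $A$ and $S$ are mutually inverse. The identity $S\circ A=\mathrm{id}$ on $\F^{s,\balpha}_{p,q}$ is precisely the reconstruction formula $f=\sum_{\bj,\bk}2^{\|\bj\|_1}\langle f,h_{\bj,\bk}\rangle h_{\bj,\bk}$ already established at the end of the proof of Proposition~\ref{lemcompanion}. For $A\circ S=\mathrm{id}$ on $\tilde{f}^{s,\balpha}_{p,q}$ I would use the continuity of $A$ and pass to the limit over finite partial sums $S_{F}(\lambda)$: on such finite Haar combinations (which lie in $L_2$) the dual pairing \eqref{f3} reduces to the ordinary $L_2$ inner product, and the $L_2$-orthonormality $\langle 2^{\|\bj\|_1/2}h_{\bj,\bk},2^{\|\bj'\|_1/2}h_{\bj',\bk'}\rangle=\delta_{\bj,\bj'}\delta_{\bk,\bk'}$ of the normalized Haar system yields $A(S_{F}(\lambda))=\lambda|_{F}$; letting $F$ exhaust the index set gives $A(S(\lambda))=\lambda$.

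With $A$ thus an isomorphism whose inverse is the unconditionally convergent synthesis map $S$, the system $(h_{\bj,\bk})_{\bj,\bk}$ is an unconditional basis of $\F^{s,\balpha}_{p,q}$ with coordinate functionals $f\mapsto2^{\|\bj\|_1}\langle f,h_{\bj,\bk}\rangle$; this proves the basis property, identifies $\tilde{f}^{s,\balpha}_{p,q}$ as the associated sequence space, and gives uniqueness of the coefficients in \eqref{coeffHaar} (injectivity of $A$). The two characterizations follow directly. The equivalence of criterion (ii) with $f\in\F^{s,\balpha}_{p,q}$ is exactly the two-sided bound \eqref{equivHaar}: finiteness of $\|\lambda(f)\|_{\tilde{f}^{s,\balpha}_{p,q}}$ forces $\|f\|_{\F^{s,\balpha}_{p,q}}\lesssim\|\lambda(f)\|_{\tilde{f}^{s,\balpha}_{p,q}}<\infty$ and hence $f\in\F^{s,\balpha}_{p,q}$, and conversely. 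For criterion (i), given a weak$^{\ast}$ representation \eqref{sumHaar} with coefficients in $\tilde{f}^{s,\balpha}_{p,q}$, boundedness of $S$ makes the series converge in $\F^{s,\balpha}_{p,q}$ to some $g=S((\lambda_{\bj,\bk}))$; since $\F^{s,\balpha}_{p,q}\hookrightarrow\mathcal{S}'(\R^d)$ this limit also holds weak$^{\ast}$ly, so $f=g\in\F^{s,\balpha}_{p,q}$, and $A\circ S=\mathrm{id}$ forces $(\lambda_{\bj,\bk})=\lambda(f)$, so that \eqref{sumHaar} converges unconditionally to $f$ in $\F^{s,\balpha}_{p,q}$.

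The genuinely delicate points lie entirely in the two facts borrowed from Proposition~\ref{lemcompanion}, and it is worth flagging where their difficulty resides. Establishing $S\circ A=\mathrm{id}$ for a general $f\in\F^{s,\balpha}_{p,q}$ (rather than merely $f\in L_2$) requires knowing that the Haar series converges weak$^{\ast}$ly in $\mathcal{S}'(\R^d)$ to $f$, which rests on the compatibility of the dual pairing \eqref{f3} with the $L_2$ pairing and an $L_2$-approximation argument entirely parallel to Step~4 in the proof of Theorem~\ref{main_wav}. Likewise, the boundedness of $S$ hinges on the duality identification $(\tilde{f}^{s,\balpha}_{p,q})'\cong\tilde{f}^{-s,\balpha}_{p',q'}$ under the non-standard pairing \eqref{eqdef:nonstdprod} (Theorem~\ref{theodual}) invoked inside Proposition~\ref{lemcompanion}. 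Since both are already carried out there, the remaining work for the present theorem is confined to the operator identities $A\circ S=\mathrm{id}$ and $S\circ A=\mathrm{id}$ and to reading off criteria (i) and (ii) from \eqref{equivHaar}.
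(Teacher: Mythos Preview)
Your proposal is correct and follows essentially the same route as the paper: both arguments set up the analysis operator $A=\lambda$ and the synthesis operator $S$, establish their boundedness from Propositions~\ref{lem5.1}--\ref{prop_combo}, verify the two operator identities $A\circ S=\mathrm{id}$ and $S\circ A=\mathrm{id}$ via Haar orthogonality and the reconstruction formula, and then read off the basis property and criteria (i), (ii). The only cosmetic difference is the order: the paper first proves $\lambda\circ S=\mathrm{id}$ by orthogonality and then deduces $S\circ\lambda=\mathrm{id}$ from the injectivity of $\lambda$, whereas you take $S\circ A=\mathrm{id}$ directly from the end of the proof of Proposition~\ref{lemcompanion} and obtain $A\circ S=\mathrm{id}$ by a limit over finite sums; similarly, the paper extends $S$ by completion from finite sequences using~\eqref{equivHaar}, while you cite~\eqref{boundedness2} directly---these are equivalent since Proposition~\ref{prop_combo} is the combination of Propositions~\ref{lem5.1} and~\ref{lemcompanion}.
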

\begin{proof}
As a direct consequence of the equivalence~\eqref{equivHaar}
proved in Proposition~\ref{prop_combo} the analysis operator
\[
\lambda : f \mapsto \big(  2^{\|\bj\|_1}\langle f, h_{\bj,\bk}\rangle\big)_{\bj,\bk}
\]
is well-defined and bounded from $\F^{s,\balpha}_{p,q}$ to $\tilde{f}^{s,\balpha}_{p,q}$. Moreover, it is injective and
we have the equivalence of norms $\|f\|_{\F^{s,\balpha}_{p,q}}\asymp\|\lambda(f)\|_{\tilde{f}^{s,\balpha}_{p,q}}$.
Further, for
$f\in\mathcal{S}'(\R^d)$ we have $f\in\F^{s,\balpha}_{p,q}$
if and only if $\lambda(f)\in\tilde{f}^{s,\balpha}_{p,q}$ (whenever $\lambda(f)$ is defined).

Now, let us have a look at the synthesis operator
\begin{align}\label{synth-Op}
S:(\lambda_{\bj,\bk})_{\bj,\bk}\mapsto \sum_{\bj,\bk} \lambda_{\bj,\bk}h_{\bj,\bk}.
\end{align}
Clearly, for every finite sequence the assignment $S$ defines an element in $\F^{s,\balpha}_{p,q}$. By completion, using \eqref{equivHaar} and the fact that the finite sequences lie dense
in $\tilde{f}^{s,\balpha}_{p,q}$, this synthesis further extends to all sequences of $\tilde{f}^{s,\balpha}_{p,q}$, with unconditional and strong convergence of \eqref{synth-Op} in $\F^{s,\balpha}_{p,q}$.
Hence, $S$ is a well-defined bounded linear operator from $\tilde{f}^{s,\balpha}_{p,q}$ to $\F^{s,\balpha}_{p,q}$, which, as another consequence of \eqref{equivHaar}, is also injective.

Next, turning to the composition $\lambda\circ S$ operating from $\tilde{f}^{s,\balpha}_{p,q}$ to $\tilde{f}^{s,\balpha}_{p,q}$, we deduce for each fixed $\bj_\ast\in\N_0^d$ and $\bk_\ast\in\Z^d$
\[
\Big\langle \sum_{\bj,\bk} 2^{\|\bj\|_1} \lambda_{\bj,\bk} h_{\bj,\bk} , h_{\bj_\ast,\bk_\ast} \Big\rangle = \sum_{\bj,\bk} 2^{\|\bj\|_1} \lambda_{\bj,\bk} \big\langle   h_{\bj,\bk} , h_{\bj_\ast,\bk_\ast} \big\rangle = \lambda_{\bj_\ast,\bk_\ast} \,,
\]
using the 
orthogonality of the system $(h_{\bj,\bk})_{\bj,\bk}$ in $L_2(\R^d)$. We obtain $Id_{\tilde{f}^{s,\balpha}_{p,q}}=\lambda\circ S$
and in turn $\lambda\circ S\circ \lambda=\lambda$. 
Due to the injectivity of $\lambda$, the latter equality further implies $Id_{\F^{s,\balpha}_{p,q}}= S\circ \lambda$.
In particular, $\lambda$ and $S$ are thus bijections and every $f\in\F^{s,\balpha}_{p,q}$ allows for a representation~\eqref{sumHaar}.

To see that the representing coefficients $(\lambda_{\bj,\bk})_{\bj,\bk}$ are unique, under the assumption of strong convergence of the sum, 
let $\lambda^\ast=(\lambda^\ast_{\bj,\bk})_{\bj,\bk}$ be some sequence which satisfies \eqref{sumHaar} in a strong sense for some special ordering of the sum.
Then again \eqref{equivHaar} together with a completion argument yields $\lambda^\ast\in\tilde{f}^{s,\balpha}_{p,q}$, and thus
$\lambda^\ast=\lambda(f)$ by the injectivity of $S$. Hence, the expansion coefficients in \eqref{sumHaar} are unique and it follows that $\mathcal{H}_{d}$ is a basis. Its unconditionality is due to the fact that the convergence 
of \eqref{sumHaar} for sequences $\lambda^\ast\in\tilde{f}^{s,\balpha}_{p,q}$ is always unconditional.

For the proof of criterion (i) we just remark that for sequences $(\lambda_{\bj,\bk})_{\bj,\bk}$ in $\tilde{f}^{s,\balpha}_{p,q}$ with
weak*-convergence of \eqref{sumHaar} in $\mathcal{S}^\prime(\R^d)$
the convergence is automatically in the stronger sense of $\F^{s,\balpha}_{p,q}$.
\end{proof}


\begin{remark}
For brevity, the above theorem was only stated for the F-case.
There also exists a B-version, which reads precisely the same apart from condition~\eqref{scond} which is replaced by
\begin{align*}
|s|/\alpha_{\text{min}}<
\min\Big\{\frac{1}{p},1-\frac{1}{p}\Big\}.
\end{align*}
\end{remark}

In the proof of Proposition~\ref{lemcompanion} we utilized isomorphisms $\big(\F^{s,\balpha}_{p,q}\big)^\prime \cong \F^{-s,\balpha}_{p^\prime,q^\prime}$
and $\big(\tilde{f}^{s,\balpha}_{p,q}\big)^\prime \cong \tilde{f}^{-s,\balpha}_{p^\prime,q^\prime}$ as well as
$\big(\B^{s,\balpha}_{p,q}\big)^\prime \cong \B^{-s,\balpha}_{p^\prime,q^\prime}$
and $\big(\tilde{b}^{s,\balpha}_{p,q}\big)^\prime \cong \tilde{b}^{-s,\balpha}_{p^\prime,q^\prime}$.
Hence, for the completeness of our exposition, it remains to establish those.

\begin{theorem}\label{theodual}
Let $1<p,q<\infty$, $s\in\R$, and $\balpha=(\alpha_1,\ldots,\alpha_d)>0$ with $\sum_{i=1}^{d}\alpha_i=d$. Then
\begin{align*}
\Big( \F^{s,\balpha}_{p,q}(\R^d)  \Big)^\prime \cong \F^{-s,\balpha}_{p^\prime,q^\prime}(\R^d)
\quad\text{and}\quad
\Big( \tilde{f}^{s,\balpha}_{p,q} \Big)^\prime \cong \tilde{f}^{-s,\balpha}_{p^\prime,q^\prime} \,,
\end{align*}	
whereby the second of these isomorphies has to be understood with respect to the non-standard pairing~\eqref{eqdef:nonstdprod}. In the Besov case, we have the analogous relations
\begin{align*}
\Big( \B^{s,\balpha}_{p,q}(\R^d)  \Big)^\prime \cong \B^{-s,\balpha}_{p^\prime,q^\prime}(\R^d)
\quad\text{and}\quad
\Big( \tilde{b}^{s,\balpha}_{p,q} \Big)^\prime \cong \tilde{b}^{-s,\balpha}_{p^\prime,q^\prime} \,.
\end{align*}
\end{theorem}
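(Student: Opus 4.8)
The plan is to establish the two sequence-space dualities first and then transport them to the function spaces through the wavelet isomorphism of Theorem~\ref{main_wav}. Throughout I fix a wavelet pair $\psi_0,\psi$, which we may take real-valued, with smoothness and vanishing-moment orders $K,L$ large enough that Theorem~\ref{main_wav} applies simultaneously to the tuples $(s,p,q)$ and $(-s,p',q')$; since $K,L$ may be chosen arbitrarily large (and $1<p,q<\infty$), this costs nothing.

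For the sequence spaces the structural point is that, at each fixed scale $\bj\in\N_0^d$, the characteristic functions $\chi_{\bj,\bk}$, $\bk\in\Z^d$, have pairwise disjoint supports and tile $\R^d$, so that
$$
\Big|\sum_{\bk\in\Z^d}\lambda_{\bj,\bk}\chi_{\bj,\bk}(x)\Big|=\sum_{\bk\in\Z^d}|\lambda_{\bj,\bk}|\chi_{\bj,\bk}(x),\qquad \int_{\R^d}\chi_{\bj,\bk}=|Q_{\bj,\bk}|=2^{-\|\bj\|_1}.
$$
In the $F$-case I would identify $(\lambda_{\bj,\bk})_{\bj,\bk}\in\tilde{f}^{s,\balpha}_{p,q}$ with the vector-valued step function $(g_{\bj})_{\bj}$, $g_{\bj}:=2^{\|\bj/\balpha\|_\infty s}\sum_{\bk}\lambda_{\bj,\bk}\chi_{\bj,\bk}$; this is an isometry onto the closed subspace $E\subset L_p(\ell_q(\N_0^d))$ of families that are, componentwise in $\bj$, constant on the rectangles $Q_{\bj,\bk}$. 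The scale-wise conditional expectations $\mathbb{E}_{\bj}$ (averages over the cubes $Q_{\bj,\bk}$) satisfy $|\mathbb{E}_{\bj}g_{\bj}|\le Mg_{\bj}$ pointwise, so by the Fefferman--Stein inequality (Theorem~\ref{feffstein}) the diagonal operator $P_E:=(\mathbb{E}_{\bj})_{\bj}$ is a bounded projection onto $E$, both on $L_p(\ell_q)$ and on its dual $L_{p'}(\ell_{q'})$; here $1<p,q<\infty$ is essential. As each $\mathbb{E}_{\bj}$ is self-adjoint, $E'$ identifies with the step-function subspace of $L_{p'}(\ell_{q'})$, i.e. with $\tilde{f}^{-s,\balpha}_{p',q'}$, and carrying the canonical $L_p(\ell_q)$--$L_{p'}(\ell_{q'})$ pairing through the identification produces, via $\int\chi_{\bj,\bk}=2^{-\|\bj\|_1}$, exactly the non-standard pairing~\eqref{eqdef:nonstdprod}. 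The Besov case is analogous but simpler: disjointness gives $\|\sum_{\bk}\lambda_{\bj,\bk}\chi_{\bj,\bk}\|_p=2^{-\|\bj\|_1/p}\|(\lambda_{\bj,\bk})_{\bk}\|_{\ell_p}$, so $\tilde{b}^{s,\balpha}_{p,q}$ is a weighted $\ell_q(\ell_p)$ space with dual the weighted $\ell_{q'}(\ell_{p'})$ space $\tilde{b}^{-s,\balpha}_{p',q'}$, the weights $2^{-\|\bj\|_1/p}$ and $2^{-\|\bj\|_1/p'}$ again combining to the factor $2^{-\|\bj\|_1}$ of~\eqref{eqdef:nonstdprod}.

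To pass to the function spaces I would invoke Theorem~\ref{main_wav}: the analysis operator $\lambda:f\mapsto(2^{\|\bj\|_1}\langle f,\psi_{\bj,\bk}\rangle)_{\bj,\bk}$ and the synthesis operator $S:(\mu_{\bj,\bk})_{\bj,\bk}\mapsto\sum\mu_{\bj,\bk}\psi_{\bj,\bk}$ are bounded between $\F^{s,\balpha}_{p,q}$ and $\tilde{f}^{s,\balpha}_{p,q}$ with $S\circ\lambda=\mathrm{Id}$, so $P:=\lambda\circ S$ is a bounded projection on $\tilde{f}^{s,\balpha}_{p,q}$ with range isomorphic to $\F^{s,\balpha}_{p,q}$. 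By the choice of $K,L$ the same operators realize $\F^{-s,\balpha}_{p',q'}$ as a complemented copy inside $\tilde{f}^{-s,\balpha}_{p',q'}$, with the same matrix $P_{(\bj,\bk),(\bj',\bk')}=2^{\|\bj\|_1}\langle\psi_{\bj',\bk'},\psi_{\bj,\bk}\rangle$. A short computation shows that $P$ is self-adjoint for the pairing~\eqref{eqdef:nonstdprod}: the normalizing factor $2^{\|\bj\|_1}$ from~\eqref{coeff} cancels against the weight $2^{-\|\bj\|_1}$ in~\eqref{eqdef:nonstdprod}, leaving the symmetric Gram matrix $\langle\psi_{\bj',\bk'},\psi_{\bj,\bk}\rangle$. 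Hence, under the sequence-space duality just proved, the dual projection $P'$ equals $P$, and the standard duality of complemented subspaces yields
$$
\big(\F^{s,\balpha}_{p,q}\big)'\;\cong\;\big(P\,\tilde{f}^{s,\balpha}_{p,q}\big)'\;\cong\;P\,\tilde{f}^{-s,\balpha}_{p',q'}\;\cong\;\F^{-s,\balpha}_{p',q'},
$$
where $PX$ abbreviates the range of $P$ acting on $X$; the Besov case is identical with $\tilde{b}$ and $\B$ in place of $\tilde{f}$ and $\F$.

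The main obstacle I expect is the careful bookkeeping of the normalizations so that the non-standard pairing~\eqref{eqdef:nonstdprod} emerges consistently at each stage: in the identification of the sequence space with a subspace of $L_p(\ell_q)$ through the factor $\int\chi_{\bj,\bk}=2^{-\|\bj\|_1}$, and in verifying the self-adjointness $P'=P$, where the $2^{\|\bj\|_1}$ of~\eqref{coeff} must cancel precisely. A secondary technical point is the boundedness of the scale-wise conditional-expectation projection on the vector-valued spaces $L_p(\ell_q)$ and $L_{p'}(\ell_{q'})$, which is exactly where the hypotheses $1<p,q<\infty$ and the Fefferman--Stein inequality (Theorem~\ref{feffstein}) are used.
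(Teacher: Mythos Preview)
Your approach is correct and takes a genuinely different route from the paper. The paper proceeds in the opposite order: it first proves the function-space duality $\big(\F^{s,\balpha}_{p,q}\big)'\cong\F^{-s,\balpha}_{p',q'}$ directly, via H\"older's inequality in one direction and, in the other, a Hahn--Banach extension to $L_p(\ell_q)$ combined with an auxiliary characterization (Proposition~\ref{propCharAlternative}) of the hyperbolic spaces through representations $f=\sum_{\bj}\Delta_{\bj}f_{\bj}$. Only then does the paper transport this to the sequence spaces through the wavelet isomorphism $\lambda$. You invert this logic: you settle $(\tilde{f}^{s,\balpha}_{p,q})'\cong\tilde{f}^{-s,\balpha}_{p',q'}$ first, by realizing the sequence space as a Fefferman--Stein--complemented subspace of $L_p(\ell_q)$, and then push to the function spaces via Theorem~\ref{main_wav}.

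Your route is arguably more economical within the paper's framework, since it avoids the separate Proposition~\ref{propCharAlternative} and rests only on tools already in place (Theorem~\ref{feffstein} and Theorem~\ref{main_wav}); the paper's route, by contrast, gives an intrinsic proof of the function-space duality that does not depend on the wavelet machinery. One simplification you could make: since the wavelet system in Theorem~\ref{main_wav} is an unconditional \emph{basis}, the analysis map $\lambda$ is already an isomorphism onto $\tilde{f}^{s,\balpha}_{p,q}$, so your projection $P=\lambda\circ S$ equals the identity and the complemented-subspace discussion for $P$ is unnecessary---the function-space duality follows from the sequence-space one by a direct transport along $\lambda$ and $\lambda^{-1}$. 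Your bookkeeping of the factors $2^{\pm\|\bj\|_1}$ and the emergence of the non-standard pairing~\eqref{eqdef:nonstdprod} is handled correctly.
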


The proof of this theorem 
is based on two auxiliary propositions. The first one of these is stated without proof, since it is a straightforward generalization
of the classic identifications $\big(L_{p}(\R^d)\big)^\prime\cong L_{p^\prime}(\R^d)$ and $\big(\ell_{p}(I)\big)^\prime\cong \ell_{p^\prime}(I)$ when $1<p<\infty$ (see e.g.~\cite{edwards1965functional}).
\begin{proposition}\label{proplqLp}
Let $I$ be an arbitrary countable index set. Then
\begin{align*}
\Big(\ell_q\big(I,L_p(\R^d)\big)\Big)^\prime \cong \ell_{q^\prime}\big(I,L_{p^\prime}(\R^d)\big) \quad\text{and}\quad \Big(L_p\big(\R^d,\ell_q(I)\big)\Big)^\prime \cong L_{p^\prime}\big(\R^d,\ell_{q^\prime}(I)\big)
\end{align*}
in the sense that there exist isomorphisms $f\mapsto (f_i)_{i\in I}$ such that
\[
\langle f, g \rangle_{Y^\prime\times Y} = \sum_{i\in I} \int_{\R^d} f_i(x)g_i(x)  \,dx
\]
for the respective cases $Y=\ell_q\big(I,L_p(\R^d)\big)$ and $Y=L_p\big(\R^d,\ell_q(I)\big)$.
\end{proposition}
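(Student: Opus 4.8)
The plan is to establish the two asserted isomorphisms separately, since they are of genuinely different character: the first has a \emph{discrete} outer structure with an arbitrary Banach space in the inner slot, whereas the second is a true Bochner space with the sequence space $\ell_q(I)$ in the inner slot, where a measurable-selection issue must be resolved. Throughout I would use that $1<p,q<\infty$ is essential: it guarantees the scalar identifications $(L_p(\R^d))'\cong L_{p'}(\R^d)$ and $(\ell_q(I))'\cong\ell_{q'}(I)$, and the density of the finitely supported families in both $\ell_q(I,L_p(\R^d))$ and $L_p(\R^d,\ell_q(I))$, which is what lets one extend representations from single coordinates to the whole space.

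For $Y=\ell_q(I,L_p(\R^d))$ I would invoke the standard duality of an $\ell_q$-direct sum of Banach spaces. Given $\phi\in Y'$, restrict it to each coordinate by setting $f_i:=\phi(g\,e_i)\in(L_p(\R^d))'\cong L_{p'}(\R^d)$, where $g\,e_i$ denotes the family having entry $g\in L_p(\R^d)$ in position $i$ and zero elsewhere. For any finite $F\subset I$, choosing near-norming entries $g_i$ (so that $\int_{\R^d}f_i g_i\approx\|f_i\|_{p'}\|g_i\|_p$ with $\|g_i\|_p=\|f_i\|_{p'}^{q'-1}$) and using the $\ell_q$--$\ell_{q'}$ duality in the summation over $i$ yields $\big(\sum_{i\in F}\|f_i\|_{p'}^{q'}\big)^{1/q'}\le\|\phi\|$. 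Letting $F\uparrow I$ gives $(f_i)_i\in\ell_{q'}(I,L_{p'}(\R^d))$ with $\|(f_i)_i\|\le\|\phi\|$. The representation $\phi(g)=\sum_i\int_{\R^d}f_i g_i$ holds on finitely supported $g$ by linearity and extends to all of $Y$ by density, while the reverse norm bound is immediate from H\"older. No property of the inner space beyond being Banach is used here.

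For $Y=L_p(\R^d,\ell_q(I))$ the same skeleton applies, but the key step $(f_i)_i\in L_{p'}(\R^d,\ell_{q'}(I))$ is more delicate. Again define $f_i\in L_{p'}(\R^d)$ by $h\mapsto\phi(h\,e_i)$ via $(L_p)'\cong L_{p'}$. To bound $G(x):=\big(\sum_i|f_i(x)|^{q'}\big)^{1/q'}$ in $L_{p'}$, fix a finite $F\subset I$ and a nonnegative $H$ with $\|H\|_p\le1$, set $G_F(x):=\big(\sum_{i\in F}|f_i(x)|^{q'}\big)^{1/q'}$, and use the \emph{explicit} measurable $\ell_{q'}$--$\ell_q$ duality map $(a_i(x))_{i\in F}$, given in closed form from $(f_i(x))_{i\in F}$, which satisfies $\|(a_i(x))_{i\in F}\|_{\ell_q}\le1$ and $\sum_{i\in F}a_i(x)f_i(x)=G_F(x)$ pointwise. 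Testing $\phi$ against the finitely supported $g:=(H a_i)_{i\in F}$, which has $\|g\|_{L_p(\ell_q)}=\|H\|_p\le1$, gives $\int_{\R^d}G_F H=\phi(g)\le\|\phi\|$; taking the supremum over such $H$ yields $\|G_F\|_{p'}\le\|\phi\|$, and monotone convergence as $F\uparrow I$ gives $\|G\|_{p'}\le\|\phi\|$. The representation $\phi(g)=\sum_i\int_{\R^d}f_i g_i$ then holds on finitely supported $g$ and extends by density ($q<\infty$), with Fubini and H\"older justifying the interchange of sum and integral.

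The step I expect to be the main obstacle is precisely the measurable selection of the norming family $(a_i(x))$ in the second isomorphism; abstractly it is the reason one requires the inner space $\ell_q(I)$ to be reflexive, equivalently $\ell_{q'}(I)$ to have the Radon--Nikodym property, which is exactly the general criterion under which $(L_p(\mu,X))'\cong L_{p'}(\mu,X')$ for $\sigma$-finite $\mu$ and $1\le p<\infty$. For the concrete space $\ell_q(I)$ with $1<q<\infty$ this selection is elementary and explicit, so one may bypass the abstract Radon--Nikodym machinery entirely; alternatively one could simply cite the Bochner-space duality theorem together with the reflexivity of $\ell_q(I)$. Either route completes both cases and produces exactly the pairing $\langle f,g\rangle=\sum_{i\in I}\int_{\R^d}f_i g_i$ asserted in the statement.
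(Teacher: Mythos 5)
Your proof is correct. Note, however, that the paper itself offers no proof to compare against: Proposition~\ref{proplqLp} is explicitly ``stated without proof'' as a straightforward generalization of the classical identifications $\big(L_p(\R^d)\big)'\cong L_{p'}(\R^d)$ and $\big(\ell_p(I)\big)'\cong\ell_{p'}(I)$, with a reference to Edwards. What you supply is precisely the standard argument the paper gestures at, and it is sound in every step: the coordinate restriction $f_i:=\phi(\cdot\,e_i)$, the norming choice $\|g_i\|_p=\|f_i\|_{p'}^{q'-1}$ (using $(q'-1)q=q'$) to get the $\ell_{q'}$-bound in the discrete case, and in the Bochner case the explicit pointwise duality map $a_i(x)$ with $\sum_{i\in F}a_i(x)f_i(x)=G_F(x)$ and $\|(a_i(x))_i\|_{\ell_q}\le1$, followed by testing against $(Ha_i)_{i\in F}$, monotone convergence over finite exhaustions of the countable $I$, and density of finitely supported families (which requires $q<\infty$, satisfied here). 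You correctly isolate the one genuinely nontrivial point -- measurable selection of the norming sequence -- and resolve it in closed form, which is exactly what lets one bypass the abstract criterion that $(L_p(\mu,X))'\cong L_{p'}(\mu,X')$ requires $X'$ to have the Radon--Nikodym property; your parenthetical ``equivalently'' there is slightly loose (reflexivity of $\ell_q(I)$ is a sufficient condition implying RNP of $\ell_{q'}(I)$, not formally an equivalence in general), but this is a side remark and does not affect the argument. Your two norm inequalities combine to an isometric identification realizing exactly the bilinear pairing $\langle f,g\rangle=\sum_{i\in I}\int_{\R^d}f_i g_i$ asserted in the statement, so your write-up in fact fills a gap the paper leaves to the reader.
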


The second proposition provides an alternative way to characterize functions in $\F^{s,\balpha}_{p,q}(\R^d)$ and $\B^{s,\balpha}_{p,q}(\R^d)$.
Its counterpart in the classical setting of Triebel-Lizorkin spaces is Proposition~1 in~\cite[Sec.~2.3.4]{triebel2010theory}.

\begin{proposition}\label{propCharAlternative}
Assume $1<p,q<\infty$, $s\in\R$, $\balpha=(\alpha_1,\ldots,\alpha_d)>0$, and $\sum_{i=1}^{d} \alpha_i=d$.
\begin{enumerate}[(i)]
\item
Then $f\in S^\prime(\R^d)$ belongs to $\F^{s,\balpha}_{p,q}(\R^d)$
if and only if there exists a family $\{f_{\bj}\}_{\bj\in\N_0^d}\subset L_p(\R^d)$  such that
\[
f=\sum_{\bj} \Delta_{\bj}f_{\bj} \text{ in }S^\prime(\R^d) \quad\text{ and }\quad \big\|  2^{\|\bj/\balpha\|_\infty s}f_{\bj} \big\|_{L_p(\R^d,\ell_q)}  <\infty \,.
\]
\item
Then $f\in S^\prime(\R^d)$ belongs to $\B^{s,\balpha}_{p,q}(\R^d)$
if and only if there exists a family $\{f_{\bj}\}_{\bj\in\N_0^d}\subset L_p(\R^d)$  such that
\begin{align}\label{llllll}
f=\sum_{\bj} \Delta_{\bj}f_{\bj} \text{ in }S^\prime(\R^d) \quad\text{ and }\quad \big\|  2^{\|\bj/\balpha\|_\infty s}f_{\bj} \big\|_{\ell_q(L_{p}(\R^d))}  <\infty \,.
\end{align}
\end{enumerate}
\end{proposition}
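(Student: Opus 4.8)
The plan is to prove part~(i); part~(ii) follows by the identical scheme, replacing the mixed norm $L_p(\ell_q)$ by $\ell_q(L_p)$ and using the ordinary triangle inequality in $\ell_q$ together with Young's inequality in place of the vector-valued maximal inequality. Throughout I rely on the frequency localization built into the pieces $\Delta_{\bj}$ and on the fact that $1<p,q<\infty$, which is precisely what makes Theorem~\ref{feffstein} and Lemma~\ref{Fouriermultiplier} available. Two elementary facts will be used repeatedly: the neighbour disjointness $\theta_{\bm}\theta_{\bj}\equiv 0$ unless $|m_i-j_i|\le 1$ for every $i$, and the anisotropic weight comparison $2^{\|\bm/\balpha\|_\infty s}\asymp 2^{\|(\bm+\bl)/\balpha\|_\infty s}$ whenever $\bl$ ranges over a fixed finite set (since the exponents differ by at most $\|\bl\|_\infty/\alpha_{\text{min}}$).

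For the necessity part I would first enlarge the univariate resolution of unity: choose $\tilde\theta_j\in\mathcal{S}(\R)$ with $\tilde\theta_j\equiv 1$ on $\supp\theta_j$ and with $\supp\tilde\theta_j$ still contained in a dyadic annulus meeting only boundedly many $\supp\theta_{j'}$, and tensorize to obtain $\tilde\theta_{\bj}$, so that $\tilde\theta_{\bj}\theta_{\bj}=\theta_{\bj}$. Setting $f_{\bj}:=\cf^{-1}(\tilde\theta_{\bj}\cf f)$ one checks $\theta_{\bj}\cf f_{\bj}=\theta_{\bj}\cf f$, hence $\Delta_{\bj}f_{\bj}=\Delta_{\bj}f$, and since $\sum_{\bj}\theta_{\bj}\equiv 1$ we get $f=\sum_{\bj}\Delta_{\bj}f=\sum_{\bj}\Delta_{\bj}f_{\bj}$ in $\mathcal{S}'(\R^d)$. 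To bound $\|2^{\|\bj/\balpha\|_\infty s}f_{\bj}\|_{L_p(\ell_q)}$ I write $\tilde\theta_{\bj}=\sum_{\bl}\tilde\theta_{\bj}\theta_{\bj+\bl}$ over the finite neighbour set, so that $f_{\bj}=\sum_{\bl}\cf^{-1}(\tilde\theta_{\bj}\cf\,\Delta_{\bj+\bl}f)$ is a finite sum of pieces whose arguments $\Delta_{\bj+\bl}f$ are frequency localized. Lemma~\ref{Fouriermultiplier}, applied with the uniformly bounded multipliers $\tilde\theta_{\bj}$, together with the weight comparison and a re-indexing $\bj\mapsto\bj+\bl$, yields $\|2^{\|\bj/\balpha\|_\infty s}f_{\bj}\|_{L_p(\ell_q)}\lesssim\|f\|_{\F^{s,\balpha}_{p,q}}$; in particular each $f_{\bj}\in L_p(\R^d)$.

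For the sufficiency part I would start from a representation $f=\sum_{\bj}\Delta_{\bj}f_{\bj}$ with finite norm and apply the continuous operator $\Delta_{\bm}$, obtaining $\Delta_{\bm}f=\sum_{\bj}\Delta_{\bm}\Delta_{\bj}f_{\bj}$. By neighbour disjointness this collapses to the finitely many terms $\bj=\bm+\bl$, $\bl\in\{-1,0,1\}^d$, so that $\Delta_{\bm}f=\sum_{\bl}\Theta_{\bm}\ast\Theta_{\bm+\bl}\ast f_{\bm+\bl}$ with $\Theta_{\bj}=\cf^{-1}\theta_{\bj}$. Here I would establish the pointwise domination $|\Theta_{\bm}\ast\Theta_{\bm+\bl}\ast f_{\bm+\bl}(x)|\lesssim M_{[d]}f_{\bm+\bl}(x)$, uniformly in $\bm$ and $\bl$, which holds because the tensorized kernel is a product of $L_1$-bounded dilated approximate identities. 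Combining this with the weight comparison, applying the iterated Fefferman--Stein inequality (Theorems~\ref{feffstein} and~\ref{bagby}), and re-indexing $\bm\mapsto\bm+\bl$ gives $\|f\|_{\F^{s,\balpha}_{p,q}}\lesssim\sum_{\bl\in\{-1,0,1\}^d}\|2^{\|\bj/\balpha\|_\infty s}f_{\bj}\|_{L_p(\ell_q)}\asymp\|2^{\|\bj/\balpha\|_\infty s}f_{\bj}\|_{L_p(\ell_q)}$, as required.

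The main obstacle I expect is exactly in the sufficiency direction: because the $f_{\bj}$ are arbitrary elements of $L_p(\R^d)$ carrying no frequency restriction, the Fourier multiplier theorem (Lemma~\ref{Fouriermultiplier}) is \emph{not} directly applicable to $\Delta_{\bm}\Delta_{\bj}f_{\bj}$ as it was in the necessity part, and one is forced to pass through the pointwise bound by the iterated one-dimensional maximal operators and then invoke Theorem~\ref{feffstein}; this is the step where $1<p,q<\infty$ is essential. A secondary, purely bookkeeping, point throughout is the anisotropic weight control and the justification that the finite-neighbour rearrangements and resulting series converge in $\mathcal{S}'(\R^d)$, which is guaranteed by the embeddings already recorded in Remark~\ref{dualp}.
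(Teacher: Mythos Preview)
Your proposal is correct and follows essentially the same strategy as the paper: for necessity you build the $f_{\bj}$ as ``thickened'' Littlewood--Paley pieces (you via an enlarged bump $\tilde\theta_{\bj}$, the paper via the explicit neighbour sum $\sum_{\br\in\{-1,0,1\}^d}\Delta_{\bj+\br}f$), and for sufficiency both arguments collapse $\Delta_{\bm}f$ to the $3^d$ neighbour terms and then remove the convolution uniformly. The only differences are cosmetic: in the B-case the paper invokes the scalar H\"ormander--Mikhlin theorem term-by-term where you use Young's inequality (yours is the more elementary justification), and in the F-case the paper defers entirely to Triebel's book while you spell out the maximal-function route --- your identification of the obstacle (no frequency localisation on $f_{\bj}$, hence Lemma~\ref{Fouriermultiplier} unavailable, forcing the pointwise bound $|\Theta_{\bm}\ast\Theta_{\bm+\bl}\ast f_{\bm+\bl}|\lesssim M_{[d]}|f_{\bm+\bl}|$ plus Theorem~\ref{bagby}) is exactly the right one.
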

\begin{proof}
(i) Adaption of proof of Proposition~1 in~\cite[Sec.~2.3.4]{triebel2010theory}.

(ii) Let $(\phi_{\bj})_{\bj}$ be a hyperbolic resolution of unity as introduced in Definition~\ref{def:hypLP} with associated hyperbolic Littlewood-Paley analysis $ (\Delta_{\bj})_{\bj}$. Further, take $f\in \B^{s,\balpha}_{p,q}$ and put $f_{\bj}:=\sum\limits_{\br\in\{-1,0,1\}^{d}}
\mathcal{F}^{-1}\phi_{\bj+\br}\mathcal{F}f$ for $\bj\in\N_0^d$, whereby we let $\phi_{-1}:=0$.
Then $\{f_{\bj}\}_{\bj}\subset L_p$ and
\begin{align*}
\sum_{\bj\in\N_0^d} \mathcal{F}^{-1}\phi_{\bj}\mathcal{F}f_{\bj}
= \sum_{\bj\in\N_0^d} \mathcal{F}^{-1}\phi_{\bj} \Big(\sum\limits_{\br\in\{-1,0,1\}^{d}}\phi_{\bj+\br}\Big)\mathcal{F}f
= \sum_{\bj\in\N_0^d} \mathcal{F}^{-1}\phi_{\bj}\mathcal{F}f
= \sum_{\bj\in\N_0^d} \Delta_{\bj}f = f
\end{align*}
as well as
\begin{align*}
\big\| 2^{\|\bj/\balpha\|_\infty s} f_{\bj} \big\|_{\ell_q(L_p)}
&= \Big\| 2^{\|\bj/\balpha\|_\infty s} \sum\limits_{\br\in\{-1,0,1\}^{d}} 
\mathcal{F}^{-1}\phi_{\bj+\br}\mathcal{F}f \Big\|_{\ell_q(L_p)} \\
&\lesssim  \sum\limits_{\br\in\{-1,0,1\}^{d}} \Big\| 2^{\|\bj/\balpha\|_\infty s} 
\mathcal{F}^{-1}\phi_{\bj+\br}\mathcal{F}f \Big\|_{\ell_q(L_p)} 
\asymp \|f\|_{\B^{s,\balpha}_{p,q}} < \infty \,.
\end{align*}

This settles one direction of the assertion.
For the other direction, let $f\in S^\prime(\R^d)$ satisfy \eqref{llllll}
with associated $\{f_{\bj}\}_{\bj} \subset L_p $.
In view of $f=\sum_{\bj} \Delta_{\bj}f_{\bj}$, we can estimate
\begin{align*}
\big\|&2^{\|\bj/\balpha\|_\infty s} \Delta_{\bj}f \big\|_{\ell_q(L_p)} 
= \Big\|2^{\|\bj/\balpha\|_\infty s} \Delta_{\bj} \sum_{\br\in\{-1,0,1\}^{d}} \Delta_{\bj+\br}f_{\bj+\br} \Big\|_{\ell_q(L_p)}  \\
&\le \sum_{\br\in\{-1,0,1\}^{d}} \big\| 2^{\|\bj/\balpha\|_\infty s}
\Delta_{\bj}\Delta_{\bj+\br} f_{\bj+\br} \big\|_{\ell_q(L_p)}   
\lesssim \sum_{\br\in\{-1,0,1\}^{d}}  \big\|2^{\|\bj/\balpha\|_\infty s}  \Delta_{\bj+\br}f_{\bj+\br} \big\|_{\ell_q(L_p)}  \\
&\asymp  \big\|2^{\|\bj/\balpha\|_\infty s} \Delta_{\bj}f_{\bj} \big\|_{\ell_q(L_p)} 
\lesssim  \big\|2^{\|\bj/\balpha\|_\infty s} f_{\bj} \big\|_{\ell_q(L_p)} <\infty  \,,
\end{align*}
where the last two lines are due to the H\"ormander-Mikhlin multiplier theorem, which is applied twice.
This estimate shows $f\in \B^{s,\balpha}_{p,q}$, finishing the proof.
\end{proof}

Now we are ready to give a thorough proof of the duality relations stated in Theorem~\ref{theodual}.

\begin{proof}[of Theorem~\ref{theodual}]
We restrict to the F-case and begin with the more involved relation $\big(\F^{s,\balpha}_{p,q}\big)^\prime \cong \F^{-s,\balpha}_{p^\prime,q^\prime}$.
The subsequent proof is thereby an adaption of the proof of the classical theorem in~\cite[Sec.~2.11.2]{triebel2010theory} to the setting of hyperbolic spaces.

It is essential to note that, since $S(\R^d)$ lies dense in $\F^{s,\balpha}_{p,q}$, there is a natural embedding
\begin{align}\label{eq:natemb}
\kappa~:~ \Big(\F^{s,\balpha}_{p,q}\Big)^{\prime}  \hookrightarrow S^\prime(\R^d) \,.
\end{align}
Hence, both $\big(\F^{s,\balpha}_{p,q}\big)^\prime$ and $\F^{-s,\balpha}_{p^\prime,q^\prime}$
can be interpreted as subspaces of $S^\prime(\R^d)$, simplifying the following considerations.

Let us first assume that $f\in S^\prime(\R^d)$ is an element of $\F^{-s,\balpha}_{p^\prime,q^\prime}$ and take $\Phi_{\bl}$ and $\Lambda_{\bl}$ as in the proof of Proposition~\ref{lem5.1}, for instance.
Then $f$ defines an element of $\big(\F^{s,\balpha}_{p,q}\big)^\prime$ via
\begin{align}\label{eqdef:dualprod}
\langle f, g\rangle_{\ast}:= \sum_{\bl\in\Z^d} \langle \Phi_{\bl}\ast f, \overline{\Lambda_{\bl}\ast g}\rangle \,,\quad\text{where $g\in\F^{s,\balpha}_{p,q}$}\,,
\end{align}
as can be seen by the following estimate,
\begin{align*}
|\langle f, g\rangle_{\ast}|&=\Big|\sum_{\bl\in\Z^d} \langle \Phi_{\bl}\ast f, \overline{\Lambda_{\bl}\ast g}\rangle\Big|
= \Big|\sum_{\bl\in\Z^d} \int_{\R^d} (\Phi_{\bl}\ast f)(y) \cdot (\Lambda_{\bl}\ast g)(y) \,dy \Big| \\
&= \Big|\int_{\R^d} \sum_{\bj\in\N_0^d}  (\Phi_{\bj}\ast f)(y) \cdot (\Lambda_{\bj}\ast g)(y) \,dy \Big| \\
&\le \int_{\R^d}  \Big(\sum_{\bj\in\N_0^d} 2^{-\|\bj/\balpha\|_\infty sq^\prime} |\Phi_{\bj}\ast f(y)|^{q^\prime}\Big)^{1/q^\prime} \Big(\sum_{\bj\in\N_0^d} 2^{\|\bj/\balpha\|_\infty sq} |\Lambda_{\bj}\ast g(y)|^{q}\Big)^{1/q} \,dy \\
&\le   \Big( \int_{\R^d} \Big(\sum_{\bj\in\N_0^d} 2^{-\|\bj/\balpha\|_\infty sq^\prime} |\Phi_{\bj}\ast f(y)|^{q^\prime}\Big)^{p^\prime/q^\prime} \,dy \Big)^{1/p^\prime} \\
&\qquad\qquad\qquad \times\Big( \int_{\R^d} \Big(\sum_{\bj\in\N_0^d} 2^{\|\bj/\balpha\|_\infty sq} |\Lambda_{\bj}\ast g(y)|^{q}\Big)^{p/q} \,dy \Big)^{1/p} \\
&\lesssim  \| f \|_{\F^{-s,\balpha}_{p^\prime,q^\prime}}  \cdot 
\| g \|_{\F^{s,\balpha}_{p,q}}  \,.
\end{align*}
Hereby, we applied H\"older's inequality and used $\Phi_{\bl}=\Lambda_{\bl}=0$ if $\bl\notin\N_0^d$. 

The duality product $\langle \cdot,\cdot \rangle_{\ast}$ thus yields an embedding $\iota:\F^{-s,\balpha}_{p^\prime,q^\prime} \to \big(\F^{s,\balpha}_{p,q}\big)^\prime$. Further, we have natural embeddings
$\nu: \F^{-s,\balpha}_{p^\prime,q^\prime} \hookrightarrow S^\prime(\R^d)$ and $\kappa:\big(\F^{s,\balpha}_{p,q}\big)^\prime\hookrightarrow S^\prime(\R^d)$ (see~\eqref{eq:natemb}).
To establish a bridge between $\kappa$, $\iota$, and $\nu$,
we now consider the special case of a Schwartz function $g=\phi\in S(\R^d)$ in~\eqref{eqdef:dualprod}.
We obtain
\[
\langle f, \phi\rangle_{\ast} = \sum_{\bl\in\Z^d} \langle \Phi_{\bl}\ast f, \Lambda_{\bl}\ast \phi\rangle_{S^\prime\times S}
= \sum_{\bl\in\Z^d} \langle  \Phi_{\bl}\ast\Lambda_{\bl}\ast f,  \phi\rangle_{S^\prime\times S}= \langle f, \phi\rangle_{S^\prime\times S}\,.
\]
Hence, the above (somewhat artificially) defined operation of $f$ on $\F^{s,\balpha}_{p,q}$ via $\langle \cdot,\cdot \rangle_{\ast}$
is compatible with the operation of $f$ as an element of $S^\prime(\R^d)$ on $S(\R^d)$.
This proves $\nu=\kappa\circ\iota$ and thus $\F^{-s,\balpha}_{p^\prime,q^\prime}\subset \big(\F^{s,\balpha}_{p,q}\big)^\prime$, considered as subsets of $S^\prime(\R^d)$.

It remains to prove the converse inclusion $\big(\F^{s,\balpha}_{p,q}\big)^\prime \subset \F^{-s,\balpha}_{p^\prime,q^\prime}$.
For this, let $f\in S^\prime(\R^d)$ be an element of $\big(\F^{s,\balpha}_{p,q}\big)^\prime$.
We will show that this implies 
$f\in \F^{-s,\balpha}_{p^\prime,q^\prime}$ and to this end start with a construction
of an isometric embedding
\begin{align}\label{isoemb}
\mu:~ \Big(\F^{s,\balpha}_{p,q}\Big)^\prime \to L_{p^\prime}\big(\R^d,\ell_{q^\prime}\big)  \,,\quad f\mapsto (f_{\bj})_{\bj}.
\end{align}
Thereby, we build upon the observation that the assignment $g\mapsto ( 2^{|\bj/\balpha|_\infty s} \Delta_{\bj}g)_{\bj}$ maps $\F^{s,\balpha}_{p,q}$ isometrically to a closed subspace of $L_{p}\big(\R^d,\ell_{q}\big)$.
Via this assignment and the Hahn-Banach extension theorem, it is therefore possible to identify each functional $f\in\big(\F^{s,\balpha}_{p,q}\big)^\prime$ with a functional on
$L_{p}\big(\R^d,\ell_{q}\big)$ having the same norm.
Invoking Proposition~\ref{proplqLp} (i), this then yields an associated family $(f_{\bj})_{\bj}\in L_{p^\prime}\big(\R^d,\ell_{q^\prime}\big)$ with
$\|(f_{\bj})_{\bj} \|_{L_{p^\prime}(\R^d,\ell_{q^\prime})}=\|f\|_{(\F^{s,\balpha}_{p,q})^\prime}$ and $\langle f, g\rangle = \sum_{\bj} \langle f_{\bj}, 2^{|\bj/\balpha|_\infty s} \Delta_{\bj} g\rangle$, establishing \eqref{isoemb}.

In particular, for every $\phi\in S(\R^d)$
\begin{align*}
\langle f,\phi \rangle = \sum_{\bj} \langle f_{\bj}, 2^{\|\bj/\balpha\|_\infty s} \Delta_{\bj} \phi\rangle
= \sum_{\bj} \langle \Delta_{\bj} \tilde{f}_{\bj},  \phi\rangle \,,
\end{align*}
with $\tilde{f}_{\bj}:= 2^{\|\bj/\balpha\|_{\infty}s} f_{\bj}$. Hence, we have
\[
f = \sum_{\bj} 2^{\|\bj/\balpha\|_{\infty}s}  \Delta_{\bj}f_{\bj} = \sum_{\bj} \Delta_{\bj} \tilde{f}_{\bj} \quad\text{weak*ly in $S^\prime(\R^d)$.}
\]
Further, it holds $\| (2^{-\|\bj/\balpha\|_{\infty}s} \tilde{f}_{\bj})_{\bj\in\N_0^d} \|_{\ell_{q^\prime}( L_{p^\prime} )} = \|(f_{\bj})_{\bj\in\N_0^d} \|_{\ell_{q^\prime}( L_{p^\prime} )} 
= \|f\|_{( \F^{s,\balpha}_{p,q} )^\prime}$.
In view of Proposition~\ref{propCharAlternative} (i), this shows $f\in \F^{-s,\balpha}_{p^\prime,q^\prime}$ and finishes the proof of $\big(\F^{s,\balpha}_{p,q}\big)^\prime \cong \F^{-s,\balpha}_{p^\prime,q^\prime}$.




We next establish $\big( \tilde{f}^{s,\balpha}_{p,q} \big)^\prime \cong \tilde{f}^{-s,\balpha}_{p^\prime,q^\prime}$, which can be elegantly
done using the previous result together with the wavelet isomorphism
$\lambda:\F^{s,\balpha}_{p,q}\to\tilde{f}^{s,\balpha}_{p,q}$ established in Theorem~\ref{main_wav}. For this, we first verify that $\lambda$ preserves the duality structure of $\F^{-s,\balpha}_{p^\prime,q^\prime}\times\F^{s,\balpha}_{p,q}$. Indeed, for 
$f\in\F^{-s,\balpha}_{p^\prime,q^\prime}$ and $g\in\F^{s,\balpha}_{p,q}$ we have
\begin{align*}
\langle f,g\rangle_{\F^{-s,\balpha}_{p^\prime,q^\prime}\times\F^{s,\balpha}_{p,q}}
&= \Big\langle \sum_{\bj,\bk}  2^{\|\bj\|_1} \langle f,\psi_{\bj,\bk} \rangle \psi_{\bj,\bk} ,g \Big\rangle_{\F^{-s,\balpha}_{p^\prime,q^\prime}\times\F^{s,\balpha}_{p,q}}
 = \sum_{\bj,\bk} 2^{\|\bj\|_1} \langle f,\psi_{\bj,\bk} \rangle 
\langle  \psi_{\bj,\bk} ,g\rangle_{\F^{-s,\balpha}_{p^\prime,q^\prime}\times\F^{s,\balpha}_{p,q}} \\
&= \sum_{\bj,\bk} 2^{-\|\bj\|_1}  \big( 2^{\|\bj\|_1} \langle f,\psi_{\bj,\bk} \rangle \big) \big( 2^{\|\bj\|_1}
\langle g, \psi_{\bj,\bk} \rangle \big)
= \langle \lambda(f),\lambda(g)\rangle_{\tilde{f}^{-s,\balpha}_{p^\prime,q^\prime}\times\tilde{f}^{s,\balpha}_{p,q}}\,.
\end{align*}
Note that hereby we relied on the strong convergence of the wavelet expansion in the space $\F^{-s,\balpha}_{p^\prime,q^\prime}$.
Next we recall the isomorphism $\iota:\F^{-s,\balpha}_{p^\prime,q^\prime} \to \big(\F^{s,\balpha}_{p,q}\big)^\prime$ established above and let
$\lambda^{\prime}:\big(\tilde{f}^{s,\balpha}_{p,q}\big)^{\prime}\to \big(\F^{s,\balpha}_{p,q}\big)^{\prime}$ denote the dual map of $\lambda$, which is also an isomorphism. Then we can read  $\big( \tilde{f}^{s,\balpha}_{p,q} \big)^\prime \cong \tilde{f}^{-s,\balpha}_{p^\prime,q^\prime}$ directly from the following chain of isomorphisms 
\begin{align*}
\big(\tilde{f}^{s,\balpha}_{p,q}\big)^\prime \times \tilde{f}^{s,\balpha}_{p,q} \xrightarrow{\lambda^{\prime} \times \lambda^{-1}}
\big(\F^{s,\balpha}_{p,q}\big)^\prime \times \F^{s,\balpha}_{p,q} \xrightarrow{\iota^{-1} \times Id}
\F^{-s,\balpha}_{p^\prime,q^\prime} \times \F^{s,\balpha}_{p,q} 
\xrightarrow{\lambda \times \lambda} \tilde{f}^{-s,\balpha}_{p',q'} \times \tilde{f}^{s,\balpha}_{p,q}\,. 
\end{align*}
%
%
%
%
\end{proof}



\section{Hyperbolic and classical (anisotropic) Sobolev spaces}

In the remaining two sections 
we will analyze the relationship between the newly introduced hyperbolic scale of spaces $\widetilde{A}^{s,\balpha}_{p,q}(\R^d)$ from Section~\ref{section3}, where $A\in\{B,F\}$, and the classical scale of anisotropic spaces $A^{s,\balpha}_{p,q}(\R^d)$, which was recalled in Section~\ref{section2}. Our first result shows that, surprisingly, for Sobolev spaces (i.e.\ the case $A=F$, $1<p<\infty$, $q=2$)
both scales coincide.

\begin{theorem}\label{comp} Let $1<p<\infty$, $s\in \R$, and $\balpha>0$ be an anisotropy vector as above. Then
$$
  \widetilde{W}^{s,\balpha}_p(\R^d) = W^{s,\balpha}_p(\R^d) \qquad\text{(in the sense of equivalent norms).}
$$
\end{theorem}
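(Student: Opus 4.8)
The plan is to reduce everything to the case $s=0$ by means of a lifting operator and then to invoke the Fourier multiplier theorem, Lemma~\ref{Fouriermultiplier}. For $\sigma\in\R$ set $w_\sigma(\xi):=\big(\sum_{i=1}^d(1+\xi_i^2)^{1/(2\alpha_i)}\big)^\sigma$ and let $I_\sigma f:=\cf^{-1}[w_\sigma\cf f]$. Since $w_\sigma$ is a smooth, strictly positive function of at most polynomial growth whose reciprocal is $w_{-\sigma}$, the operator $I_\sigma$ is a bijection on $\mathcal S'(\R^d)$ with $I_\sigma^{-1}=I_{-\sigma}$, and by the very definition~\eqref{cls_iso} of $W^{s,\balpha}_p$ the map $I_s$ is an isometry from $W^{s,\balpha}_p(\R^d)$ onto $L_p(\R^d)=W^{0,\balpha}_p(\R^d)$. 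I would therefore establish two claims: first, that $\F^{0,\balpha}_{p,2}(\R^d)=L_p(\R^d)$ with equivalent norms; and second, that $I_s$ is an isomorphism from $\F^{s,\balpha}_{p,2}(\R^d)$ onto $\F^{0,\balpha}_{p,2}(\R^d)$. Granting both, the theorem follows by the chain $\|f\|_{\F^{s,\balpha}_{p,2}}\asymp\|I_sf\|_{\F^{0,\balpha}_{p,2}}\asymp\|I_sf\|_p=\|f\|_{W^{s,\balpha}_p}$, and since $I_s$ is a bijection on $\mathcal S'(\R^d)$ the two spaces consist of exactly the same distributions.

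The first claim is the tensor-product (dominating mixed) Littlewood--Paley theorem: at $s=0$ the smoothness weight is trivial, so $\|f\|_{\F^{0,\balpha}_{p,2}}=\big\|(\sum_{\bj}|\Delta_{\bj}f|^2)^{1/2}\big\|_p$, and for $1<p<\infty$ this is comparable to $\|f\|_p$. I would deduce this by iterating the one-dimensional Littlewood--Paley theorem over the $d$ tensor factors (via Khintchine's inequality and the Mikhlin multiplier theorem in each variable), equivalently reading it off as $S^0_{p,2}F(\R^d)=L_p(\R^d)$. In particular $\F^{0,\balpha}_{p,2}$ does not depend on $\balpha$.

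For the second claim I would freeze the frequency block and apply Lemma~\ref{Fouriermultiplier}. Writing $f_{\bj}:=2^{s\|\bj/\balpha\|_\infty}\Delta_{\bj}f$, so that $\supp\cf f_{\bj}\subset\supp\theta_{\bj}\subset\{|\xi_i|\le 2^{j_i+1}\}$, one has $\Delta_{\bj}I_sf=\cf^{-1}[\rho_{\bj}\cf f_{\bj}]$ with $\rho_{\bj}(\xi):=2^{-s\|\bj/\balpha\|_\infty}w_s(\xi)\,\eta_{\bj}(\xi)$, where $\eta_{\bj}$ is a smooth cutoff equal to $1$ on $\supp\theta_{\bj}$ and supported in a slightly enlarged dyadic box (inserting the cutoff is harmless since $\cf f_{\bj}$ lives on $\supp\theta_{\bj}$). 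By Lemma~\ref{Fouriermultiplier} with $q=2$ and $r>1/\min(p,2)+1/2$ it then suffices to prove the uniform bound $\sup_{\bj}\|\rho_{\bj}(2^{j_1}\cdot,\dots,2^{j_d}\cdot)\|_{S^r_2W}<\infty$, upon which the lemma yields $\|I_sf\|_{\F^{0,\balpha}_{p,2}}\lesssim\|f\|_{\F^{s,\balpha}_{p,2}}$. The decisive point is that on the rescaled support, where $|\eta_i|\asymp 1$ whenever $j_i\ge 1$ (and $2^{j_i}=1$ when $j_i=0$),
\[
w_s(2^{j_1}\eta_1,\dots,2^{j_d}\eta_d)=\Big(\sum_{i=1}^d(1+2^{2j_i}\eta_i^2)^{1/(2\alpha_i)}\Big)^s\asymp\Big(\sum_{i=1}^d 2^{j_i/\alpha_i}\Big)^s\asymp 2^{s\|\bj/\balpha\|_\infty},
\]
so that $\rho_{\bj}(2^{j_1}\cdot,\dots,2^{j_d}\cdot)\asymp 1$, and the same scale invariance shows that all its dominating-mixed derivatives stay bounded uniformly in $\bj$. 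Running the identical argument for $I_{-s}=I_s^{-1}$ gives the reverse inequality, establishing the second claim.

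I expect the main obstacle to be precisely this uniform $S^r_2W$-estimate: the weight $w_s$ is not a tensor product but a power of a sum, so the bookkeeping of the mixed derivatives $\partial^{\bgamma}$ requires a Fa\`a di Bruno / Leibniz expansion, which is nonetheless controlled by the fact that the inner sum and each of its derivatives scale like $2^{\|\bj/\balpha\|_\infty}$ on the rescaled block. It is worth stressing that the whole argument hinges on $q=2$: only then is $\F^{0,\balpha}_{p,2}=L_p$ available to absorb the lifting $I_s$, which is exactly why a coincidence of the present type can hold only in the Sobolev range.
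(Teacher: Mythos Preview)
Your proposal is correct and follows essentially the same route as the paper: both factor through the hyperbolic Littlewood--Paley identity $\F^{0,\balpha}_{p,2}=L_p$ and then apply Lemma~\ref{Fouriermultiplier} to the multipliers $\rho_{\bj}(\xi)=2^{-s\|\bj/\balpha\|_\infty}\eta_{\bj}(\xi)\,w_s(\xi)$ and their reciprocals, the key step in each case being the uniform $S^r_2W$-bound on the rescaled symbols. The only cosmetic difference is that you package the argument via the lifting operator $I_s$ whereas the paper writes out the two norm inequalities directly, but the multipliers and the scale-invariance computation $w_s(2^{j_1}\cdot,\dots,2^{j_d}\cdot)\asymp 2^{s\|\bj/\balpha\|_\infty}$ on the block are identical.
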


\begin{proof}
The proof is divided into two steps. For convenience, we will thereby abbreviate by 
$m_{\balpha,s} :=  \Big(\sum_{i=1}^d(1 +\xi_i^2 )^{1/(2\alpha_i)}\Big)^{s}$ the function which appears in the definition~\eqref{cls_iso} of $\widetilde{W}^{s,\balpha}_p$.

{\em Step 1.} In the first step we prove $\Vert f \Vert_{W^{s,\balpha}_p} \lesssim \Vert f
\Vert_{\widetilde{W}^{s,\balpha}_p}$.  For $\bj=(j_1,...,j_d)\in\N_0^d$, let 
$(\varphi_{\bj})_{\bj}$ denote a fixed hyperbolic resolution of unity as introduced in Definition~\ref{def:hypLP}, with corresponding univariate family $(\varphi_{j})_{j}$ where $\supp(\varphi_0)\subset[-2,2]$. 
In addition, let us also construct a second hyperbolic 
resolution of unity $(\psi_{\bj})_{\bj}$ 
such that $\psi_{\bj} \varphi_{\bj} = \varphi_{\bj}$ for every $\bj\in\N_0^d$.
Hereby, it is not possible for $(\psi_{\bj})_{\bj}$ to obey the same strict building law as formulated in Definition~\ref{def:hypLP}. 
We define functions
\[
\psi^{\ast}_{0}:=\varphi_0 +\varphi_1 \,,\; \psi^{\ast}_{1}:=\varphi_0 +\varphi_1 + \varphi_2
\,,\; \psi^{\ast}_{2}:=\varphi_0 +\varphi_1 + \varphi_2 + \varphi_3 \,,\;  \psi^{\ast}_{j}:=\sum\limits_{r=-1}^{1}\varphi_{j+r} \quad (j\ge3) 
\]
and then put $\psi_{j}:=\psi^{\ast}_{j}/3$ for $j\in\N_0$. Then clearly $\sum_j \psi_j=1$ and $\psi_{j} \varphi_{j} = \varphi_{j}$. Finally, we set
\[
\psi_{\bj}:=\psi_{j_1}\otimes\cdots\otimes\psi_{j_d} 
\quad\text{to obtain}\quad (\psi_{\bj})_{\bj} \,.
\]
By construction, $\psi_0=\varphi_0(2^{-1}\cdot)/3$,   $\psi_{j+1}=\psi_{j}(2^{-1}\cdot)$ for $j\in\N_0\backslash\{2\}$,
and $\psi_3=\psi_0(2^{-3}\cdot)-\psi_0$.
As a consequence, we can record $\supp \psi_j(2^{j}\cdot)\subset [-4,4]$ for all $j\in\N_0$.
 
Utilizing $(\psi_{\bj})_{\bj}$, we then first rewrite the $W^{s,\balpha}_p$-norm as follows,
\begin{eqnarray*}
\Vert f \Vert_{W^{s,\balpha}_p} & \asymp & \Big\Vert {\mathcal{F}}^{-1} \big\lbrack m_{\balpha,s} {\mathcal{F}}f \big\rbrack \Big\Vert_p \\ 
& \asymp &
\Big\Vert \Big( \sum_{\bj \in {\mathbb{N}}_0^d} \big\vert {\mathcal{F}}^{-1} \varphi_{\bj} m_{\balpha,s} {\mathcal{F}}f \big\vert^2
\Big)^{\frac{1}{2}} \Big\Vert_p \\
& = & \Big\Vert \Big( \sum_{\bj \in {\mathbb{N}}_0^d} 
\big\vert 2^{s \|\bj/\balpha\|_{\infty}} {\mathcal{F}}^{-1} \big(2^{-s\|\bj/\balpha\|_{\infty}} \psi_{\bj} m_{\balpha,s} \big)
\varphi_{\bj} {\mathcal{F}}f \big\vert^2 \Big)^{\frac{1}{2}} \Big\Vert_p \,.
\end{eqnarray*}
After this, we denote $M_{\balpha,s,\bj} :=  2^{- s \|\bj/\balpha\|_{\infty}} \psi_{\bj} m_{\balpha,s}$ and apply the Fourier multiplier lemma~\ref{Fouriermultiplier} with $\rho_{\bj}:=M_{\balpha,s,\bj}$ and $f_{\bj}:=2^{s\|\bj/\balpha\|_{\infty}} \mathcal{F}^{-1} \varphi_{\bj} \mathcal{F}f$. Due to $p>1$ and $q=2$ we can thereby choose $r:=2$. This leads to
$$
\Big\Vert \Big( \sum_{\bj \in {\mathbb{N}}_0^d} 2^{2 s \|\bj/\balpha\|_{\infty}} \vert
{\mathcal{F}}^{-1} M_{\balpha,s,\bj} \varphi_{\bj}
{\mathcal{F}}f \vert^2 \Big)^{\frac{1}{2}} \Big\Vert_p  \lesssim  \sup_{{\bj} \in {\mathbb{N}}_0^d} \Vert
M_{\balpha,s,\bj}(2^{j_1}\cdot, \ldots, 2^{j_d}\cdot) \Vert_{S^2_2W}  \Vert f \Vert_{\widetilde{W}^s_p}.
$$
To finish Step 1, it now merely remains to check whether $\sup_{\bj \in {\mathbb{N}}_0^d} \Vert M_{\balpha,s,\bj}(2^{j_1}\cdot, \ldots,
2^{j_d}\cdot)\Vert_{S^2_2W} $ is finite. But, defining $\tilde{\psi}_0:=\psi_0$  and $\tilde{\psi}_1:=\psi_3(2^{3}\cdot)$ and letting $\tilde{\psi}_{\br}:=\tilde{\psi}_{r_1}\otimes\cdots\otimes\tilde{\psi}_{r_d}$ for $\br\in\{0,1\}^d$, we have
$$
M_{\balpha,s,\bj}(2^{j_1}\xi_1, \ldots, 2^{j_d}\xi_d) = 2^{- s\Vert \bj/\balpha \Vert_{\infty}} \tilde{\psi}_{s(\bj)}(\xi_1, \ldots, \xi_d) m_{\balpha,s}(2^{j_1}
\xi_1, \ldots, 2^{j_d}\xi_d) \,, 
$$
where $s(\bj):=(\sgn(j_1-2),\ldots,\sgn(j_d-2))_{+}$.
And for each $\bj\in\N_0^d$, the function $\tilde{\psi}_{s(\bj)}$ belongs to ${\mathcal{S}}(\R^d)$ and is supported on $[-4,4]^d$. Hence, it is sufficient to verify that the second derivatives (up to order 2) in every component of 
\[  
F_{\balpha,s,\bj}:\; (\xi_1,\ldots,\xi_d)\mapsto 2^{- s \|\bj/\balpha\|_{\infty}}
m_{\balpha,s}(2^{j_1} \xi_1, \ldots, 2^{j_d}\xi_d) 
= \Big( \sum\limits_{i=1}^{d} 2^{-\|\bj/\balpha\|_\infty} (1+ 2^{2j_i} \xi^{2}_i )^{1/(2\alpha_i)} \Big)^{s}
\] 
are uniformly bounded over $\bj\in\N_0^d$ and $\xi\in[-4,4]^d$. For this, we first observe that
\begin{align*}
F_{\balpha,s,\bj}(\xi) &=\Big( \sum\limits_{i=1}^{d} 2^{-\|\bj/\balpha\|_\infty} (1+ 2^{2j_i} \xi^{2}_i )^{1/(2\alpha_i)} \Big)^{s} \\
&=  
\Big( \sum_{i=1}^d \Big(2^{-2 \alpha_i \|\bj/\balpha\|_{\infty}} + 2^{-2 \alpha_i (\|\bj/\balpha\|_{\infty} - j_i /\alpha_i)} \xi_i^2 \Big)^{1/(2\alpha_i)} \Big)^s \,,
\end{align*}
with quantities $2^{-2 \alpha_i \|\bj/\balpha\|_{\infty}}$ and $2^{-2 \alpha_i (\|\bj/\balpha\|_{\infty} - j_i /\alpha_i)}$  all positive and never larger than one. This immediately implies 
\begin{align}\label{unifest}
\sup\limits_{\substack{\bj\in\N_0^d \\ \xi\in[-4,4]^d}} |F_{\balpha,s,\bj}(\xi)| \lesssim 1.
\end{align}

Next, we determine the partial derivatives of $m_{\balpha,s}$. They are given by
\begin{align*}
\partial_\ell m_{\balpha,s}(\xi) &= \frac{s}{\alpha_\ell} m_{\balpha,s-1}(\xi) \xi_\ell \langle \xi_\ell \rangle^{\frac{1}{\alpha_\ell}-2} \,, \\
\partial^2_\ell m_{\balpha,s}(\xi) &= \frac{s(s-1)}{\alpha_\ell^2} m_{\balpha,s-2}(\xi) \xi^2_\ell \langle \xi_\ell \rangle^{\frac{2}{\alpha_\ell}-4} + 
\frac{s(s-1)}{\alpha_\ell} m_{\balpha,s-1}(\xi) \langle \xi_\ell \rangle^{\frac{1}{\alpha_\ell}-4} \Big( (\frac{1}{\alpha_\ell}-1)\xi^2_\ell + 1 \Big) \,, 
\end{align*}
where we use the abbreviation $\langle\xi_\ell\rangle$ for $(1+\xi^2_\ell)^{1/2}$. We deduce the estimates
\begin{align*}
|\partial_\ell m_{\balpha,s}(\xi)| &\lesssim  |m_{\balpha,s-1}(\xi)|  \langle \xi_\ell \rangle^{\frac{1}{\alpha_\ell}-1} \,, \\
|\partial^2_\ell m_{\balpha,s}(\xi)| &\lesssim |m_{\balpha,s-2}(\xi)|  \langle \xi_\ell \rangle^{\frac{2}{\alpha_\ell}-2} + 
|m_{\balpha,s-1}(\xi)| \langle \xi_\ell \rangle^{\frac{1}{\alpha_\ell}-2}  \,,
\end{align*}
and thus obtain, using $\langle 2^{j_\ell}\xi_\ell \rangle \le 2^{j_\ell} \langle \xi_\ell \rangle$,
\begin{align*}
|\partial_\ell F_{\balpha,s,\bj}(\xi)| &\lesssim 2^{- s \|\bj/\balpha\|_{\infty}} 2^{j_\ell} |m_{\balpha,s-1}(2^{\bj}\xi)|  \langle 2^{j_\ell}\xi_\ell \rangle^{\frac{1}{\alpha_\ell}-1} \lesssim 2^{- \|\bj/\balpha\|_{\infty}} 2^{j_\ell/\alpha_\ell}  |F_{\balpha,s-1,\bj}(\xi)| \langle \xi_\ell \rangle^{\frac{1}{\alpha_\ell}-1} 
\,,\\
|\partial^2_\ell F_{\balpha,s,\bj}(\xi)| &\lesssim 2^{- s \|\bj/\balpha\|_{\infty}} 2^{2j_\ell} \Big( |m_{\balpha,s-2}(2^{\bj}\xi)|  \langle 2^{j_\ell}\xi_\ell \rangle^{\frac{2}{\alpha_\ell}-2}
+ |m_{\balpha,s-1}(2^{\bj}\xi)| \langle 2^{j_\ell}\xi_\ell \rangle^{\frac{1}{\alpha_\ell}-2}  \Big) \\
&\lesssim   2^{- 2 \|\bj/\balpha\|_{\infty}} 2^{2j_\ell/\alpha_\ell} |F_{\balpha,s-2,\bj}(\xi)|  \langle \xi_\ell \rangle^{\frac{2}{\alpha_\ell}-2}
+ 2^{- \|\bj/\balpha\|_{\infty}} 2^{j_\ell/\alpha_\ell} |F_{\balpha,s-1,\bj}(\xi)| \langle \xi_\ell \rangle^{\frac{1}{\alpha_\ell}-2}  \,.
\end{align*}

Taking~\eqref{unifest} into account, we realize that the term $|F_{\balpha,s-2,\bj}(\xi)|  \langle \xi_\ell \rangle^{\frac{2}{\alpha_\ell}-2}$ and the term $|F_{\balpha,s-1,\bj}(\xi)| \langle \xi_\ell \rangle^{\frac{1}{\alpha_\ell}-2}$ are uniformly bounded in the range $\xi\in[-4,4]^d$ with respect to $\bj\in\N_0^d$. Since further $2^{- 2 \|\bj/\balpha\|_{\infty}} 2^{2j_\ell/\alpha_\ell}\le1$ and $2^{- \|\bj/\balpha\|_{\infty}} 2^{j_\ell/\alpha_\ell}\le 1$, Step~1 is finished.

{\em Step 2.} For the proof of the converse inequality 
$\Vert f \Vert_{\widetilde{W}^{s,\balpha}_p} \lesssim \Vert f \Vert_{W^{s,\balpha}_p}$ we argue analogously to Step 1 and use this time the multiplier
$$
\widetilde{M}_{\balpha,s,\bj}(\xi) := \frac{\psi_{\bj}(\xi) 2 ^{s \|\bj/\balpha\|_{\infty}}
}{m_{\balpha,s}(\xi)}\,.
$$
It is well-defined since $m_{\balpha,s}>0$, and we have, using the same notation as in Step~1,
$$
\widetilde{M}_{\balpha,s,\bj}(2^{j_1}\xi_1, \ldots, 2^{j_d} \xi_d) = \frac{\tilde{\psi}_{s(\bj)}(\xi_1,\ldots,\xi_d)}{2 ^{-s \|\bj/\balpha\|_{\infty}} m_{\balpha,s}(2^{j_1}\xi_1,\ldots,2^{j_d}\xi_d)}\,.
$$
Again, it is not difficult to
check that for every component the second derivatives are bounded on $[-4,4]^d$ independently of $\bj$.

By Lemma~\ref{Fouriermultiplier}, applied with $\rho_{\bj}:=\widetilde{M}_{\balpha,s,\bj}$, $f_{\bj}:= \mathcal{F}^{-1} m_{\balpha,s}\varphi_{\bj} \mathcal{F}f$, and $r:=2$, we get
\begin{eqnarray*}
\Vert f \Vert_{\widetilde{W}^{s,\balpha}_p} & \asymp &  \Big\Vert \Big(  \sum\limits_{\bj \in {\mathbb{N}}_0^d}  2^{2s \|\bj/\balpha\|_{\infty}} \big\vert
{\mathcal{F}}^{-1} \varphi_{\bj} \mathcal{F}f \big\vert ^2 \Big)^{\frac{1}{2}} \Big\Vert_p   \\
& =&  \Big\Vert \Big(
\sum_{\bj \in {\mathbb{N}}_0^d} 2^{2s \|\bj/\balpha\|_{\infty}} \big\vert {\mathcal{F}}^{-1}
\psi_{\bj} m_{\balpha,s}^{-1} m_{\balpha,s} \varphi_{\bj}\mathcal{F}f  \big\vert ^2\Big )^{\frac{1}{2}} \Big\Vert_p  \\
&  =&   \Big\Vert \Big( \sum_{\bj \in {\mathbb{N}}_0^d}  \big\vert
{\mathcal{F}}^{-1} \widetilde{M}_{\balpha,s,\bj} m_{\balpha,s} \varphi_{\bj}\mathcal{F}f  \big\vert ^2\Big )^{\frac{1}{2}}
\Big\Vert_p \\
&  \lesssim   & \Big( \sup_{\bj\in {\mathbb{N}}_0^d} \big\| \widetilde{M}_{\balpha,s,\bj}(2^{j_1}\cdot,\ldots, 2^{j_d}\cdot) \big\|_{S^2_2W} \Big) \cdot \Big\Vert \Big(
\sum_{\bj \in {\mathbb{N}}_0^d} \big\vert ({\mathcal{F}}^{-1} [\varphi_{\bj} m_{\balpha,s} {\mathcal{F}}f] ) \big\vert^ 2
\Big)^{\frac{1}{2}} \Big\Vert _p  \\
& \lesssim  &\Big\Vert {\mathcal{F}}^{-1} \big[m_{\balpha,s} {\mathcal{F}} f\big] \Big\Vert_p  
 \asymp  \Vert f \Vert_{W^{s,\balpha}_p}.
\end{eqnarray*}
\end{proof}

\begin{remark}\label{rem:comp}
We mention that, in contrast to this result, in case $A=B$ we only have
coincidence when $p=q=2$. A proof can be found in~\cite{ACJRV:2014}.
\end{remark}

As a direct consequence of Theorem~\ref{comp} and Theorem~\ref{main_wav},
we obtain new characterizations of classical Sobolev spaces via hyperbolic wavelets.

\begin{theorem}\label{thm:main_wave1} Let $1<p<\infty$, $s\in\R$, and $\balpha=(\alpha_1,\ldots,\alpha_d)>0$ such that $\sum_{i=1}^{d}\alpha_i=d$. Let further $\psi_0,\psi$ be wavelets satisfying (K) and (L) with
\begin{equation*}
      K, L>\sigma_{p,2}+|s|/\alpha_{\text{min}}.
\end{equation*}
Then any $f\in \mathcal{S}'(\R^d)$ belongs to $W^{s,\balpha}_{p}(\R^d)$ if and only if it can be represented as
\begin{equation}\label{f2sobolev}
      f = \sum\limits_{\bj\in \N_0^d} \sum\limits_{\bk\in \Z^d} \lambda_{\bj,\bk} \psi_{\bj,\bk}
\end{equation}
with $(\lambda_{\bj,\bk})_{\bj,\bk} \in \tilde{f}^{s,\balpha}_{p,2}$. The representation \eqref{f2sobolev} converges
unconditionally in $\mathcal{S}'(\R^d)$ and in $W^{s,\balpha}_{p}(\R^d)$. In addition,
$(\psi_{\bj,\bk})_{\bj,\bk}$ is an unconditional basis in $W^{s,\balpha}_{p}(\R^d)$. The sequence of coefficients
$\lambda:=\lambda(f)=(\lambda_{\bj,\bk})_{\bj,\bk}$ is uniquely determined via
\begin{equation*}
    \lambda_{\bj,\bk}:=\lambda_{\bj,\bk}(f) = 2^{\|j\|_1}\langle f , \psi_{\bj,\bk}\rangle
\end{equation*}
and we have the wavelet isomorphism (equivalent norm)
$$
    \|f\|_{W^{s,\balpha}_{p}(\R^d)} \asymp \|\lambda(f)\|_{\tilde{f}^{s,\balpha}_{p,2}}\quad,\quad f\in
W^{s,\balpha}_{p}(\R^d)\,.
$$
\end{theorem}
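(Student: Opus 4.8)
The plan is to obtain the statement directly by chaining together Theorem~\ref{comp} and Theorem~\ref{main_wav}, since all the substantial analysis has already been carried out in those results. First I would recall that, by definition, $\widetilde{W}^{s,\balpha}_p(\R^d) = \F^{s,\balpha}_{p,2}(\R^d)$, and that Theorem~\ref{comp} supplies, throughout the range $1<p<\infty$, the norm equivalence
\[
\|f\|_{W^{s,\balpha}_p(\R^d)} \asymp \|f\|_{\widetilde{W}^{s,\balpha}_p(\R^d)} = \|f\|_{\F^{s,\balpha}_{p,2}(\R^d)}\,.
\]
Consequently $W^{s,\balpha}_p(\R^d)$ and $\F^{s,\balpha}_{p,2}(\R^d)$ coincide as sets and carry the same topology.

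Next I would specialise Theorem~\ref{main_wav} to the fine index $q=2$. With this choice the smoothness and moment condition~\eqref{f0} reads exactly $K,L > \sigma_{p,2} + |s|/\alpha_{\text{min}}$, which is precisely the hypothesis imposed here; and since $q=2<\infty$, the complete conclusion of Theorem~\ref{main_wav} is available. Thus every $f\in\F^{s,\balpha}_{p,2}(\R^d)$ admits the representation~\eqref{f2sobolev} with coefficient sequence in $\tilde{f}^{s,\balpha}_{p,2}$, the sum converges unconditionally both in $\mathcal{S}'(\R^d)$ and in $\F^{s,\balpha}_{p,2}(\R^d)$, the system $(\psi_{\bj,\bk})_{\bj,\bk}$ is an unconditional basis, the coefficients are uniquely determined by $\lambda_{\bj,\bk}(f)=2^{\|\bj\|_1}\langle f,\psi_{\bj,\bk}\rangle$ (understood in the sense of Remark~\ref{dualp}), and one has the wavelet isomorphism $\|f\|_{\F^{s,\balpha}_{p,2}(\R^d)} \asymp \|\lambda(f)\|_{\tilde{f}^{s,\balpha}_{p,2}}$.

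Finally I would transfer each of these assertions to $W^{s,\balpha}_p(\R^d)$ using the norm equivalence from Theorem~\ref{comp}. Because the two norms are equivalent, the identity of the underlying set together with its topology, the notion of (unconditional) convergence, the unconditional basis property, and the uniqueness of the coefficients are all preserved, and composing the two equivalences yields $\|f\|_{W^{s,\balpha}_p(\R^d)} \asymp \|\lambda(f)\|_{\tilde{f}^{s,\balpha}_{p,2}}$. I do not expect any genuine obstacle in this argument; the only point deserving a word of care is that the dual pairing $\langle f,\psi_{\bj,\bk}\rangle$ is read in the sense of Remark~\ref{dualp}, which is legitimate precisely because $f\in W^{s,\balpha}_p(\R^d)=\F^{s,\balpha}_{p,2}(\R^d)$ lies in the space for which that pairing was justified.
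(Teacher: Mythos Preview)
Your proposal is correct and matches the paper's own treatment: the paper presents this theorem explicitly as ``a direct consequence of Theorem~\ref{comp} and Theorem~\ref{main_wav}'' without giving a separate proof. Your chaining of the norm equivalence $W^{s,\balpha}_p = \F^{s,\balpha}_{p,2}$ from Theorem~\ref{comp} with the $q=2$ case of Theorem~\ref{main_wav} is exactly the intended argument.
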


Analogously, combining Theorem~\ref{comp} with Theorem~\ref{thm:haarmain},
we also derive new characterizations of Sobolev spaces with the 
hyperbolic Haar system $\mathcal{H}_{d}$ from~\eqref{HaarSd}.

\begin{theorem}\label{thm:haarSobolev}
	Let $1<p<\infty$, $s\in\R$, and $\balpha=(\alpha_1,\ldots,\alpha_d)>0$ such that $\sum_{i=1}^{d}\alpha_i=d$.
	Further, assume
	\begin{align*}
	|s|/\alpha_{\text{min}}<
	 \min\Big\{\frac{1}{p},1-\frac{1}{p}\Big\}.
	\end{align*}
	Then the Haar system $\mathcal{H}_{d}=(h_{\bj,\bk})_{\bj,\bk}$ from \eqref{HaarSd} constitutes an unconditional basis of $W^{s,\balpha}_{p}(\R^d)$ with associated sequence space
	$\tilde{f}^{s,\balpha}_{p,2}$.
	The unique sequence of basis coefficients for $f\in W^{s,\balpha}_{p}(\R^d)$
	is determined by $\lambda:=\lambda(f)=(\lambda_{\bj,\bk})_{\bj,\bk}$ with
	\begin{equation}\label{coeffSobHaar}
	\lambda_{\bj,\bk}:=\lambda_{\bj,\bk}(f) = 2^{\|\bj\|_1}\langle f , h_{\bj,\bk}\rangle \,.
	\end{equation}
	Further, we have the wavelet isomorphism (equivalent norm)
	$$
	\|f\|_{W^{s,\balpha}_{p}(\R^d)} \asymp \|\lambda(f)\|_{\tilde{f}^{s,\balpha}_{p,2}}\quad,\quad f\in
	W^{s,\balpha}_{p}(\R^d)\,.
	$$
	
	In addition, those elements of
	$\mathcal{S}'(\R^d)$ belonging to $W^{s,\balpha}_{p}(\R^d)$
	are characterized by either of the following two criteria:
	
	\begin{enumerate}[(i)]
		\item  
		$f$ can be represented as a sum
		\begin{equation}\label{sumSobHaar}
		f = \sum\limits_{\bj\in \N_0^d} \sum\limits_{\bk\in \Z^d} \lambda_{\bj,\bk} h_{\bj,\bk} \quad\text{converging (weak*ly) in $\mathcal{S}'(\R^d)$}
		\end{equation}
		with coefficients $(\lambda_{\bj,\bk})_{\bj,\bk} \in \tilde{f}^{s,\balpha}_{p,2}$ (with respect to some chosen ordering).
		\item 
		With $\lambda(f)$ being defined as in \eqref{coeffSobHaar}, it holds
		\begin{equation*}
		\lambda(f)=(\lambda_{\bj,\bk})_{\bj,\bk} \in\tilde{f}^{s,\balpha}_{p,2}.
		\end{equation*}
	\end{enumerate}
	In both cases, the sequence $(\lambda_{\bj,\bk})_{\bj,\bk}$ is necessarily the sequence of basis coefficients and the representation \eqref{sumSobHaar} converges
	unconditionally to $f$ in $W^{s,\balpha}_{p}(\R^d)$.
\end{theorem}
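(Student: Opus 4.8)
The plan is to obtain this result as an immediate synthesis of two facts already at our disposal: the coincidence $W^{s,\balpha}_p(\R^d)=\widetilde{W}^{s,\balpha}_p(\R^d)=\F^{s,\balpha}_{p,2}(\R^d)$ furnished by Theorem~\ref{comp}, and the hyperbolic Haar characterization of the spaces $\F^{s,\balpha}_{p,q}(\R^d)$ from Theorem~\ref{thm:haarmain}, specialized to the fine index $q=2$.

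First I would recall that, by definition, $\widetilde{W}^{s,\balpha}_p(\R^d)=\F^{s,\balpha}_{p,2}(\R^d)$, and that Theorem~\ref{comp} identifies this space with the classical Sobolev space $W^{s,\balpha}_p(\R^d)$ in the sense of equivalent norms, for all $1<p<\infty$ and $s\in\R$. Since $q=2$ lies in the admissible range $1<q<\infty$ of Theorem~\ref{thm:haarmain}, the only point needing a brief verification is that the parameter restriction~\eqref{scond} of that theorem collapses to the one stated here. Inserting $q=2$ into~\eqref{scond} yields
\begin{align*}
|s|/\amin < \min\Big\{\frac{1}{p},\frac{1}{2},1-\frac{1}{p},\frac{1}{2}\Big\} = \min\Big\{\frac{1}{p},1-\frac{1}{p}\Big\},
\end{align*}
the last equality holding because $\min\{1/p,1-1/p\}\le 1/2$ for every $p\in(1,\infty)$. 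Hence the hypothesis of the present theorem is exactly the $q=2$ instance of~\eqref{scond}.

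Given this, I would invoke Theorem~\ref{thm:haarmain} with $q=2$ to conclude that $\mathcal{H}_d$ is an unconditional basis of $\F^{s,\balpha}_{p,2}(\R^d)$ with associated sequence space $\tilde{f}^{s,\balpha}_{p,2}$, with basis coefficients $\lambda_{\bj,\bk}(f)=2^{\|\bj\|_1}\langle f,h_{\bj,\bk}\rangle$, the norm equivalence $\|f\|_{\F^{s,\balpha}_{p,2}}\asymp\|\lambda(f)\|_{\tilde{f}^{s,\balpha}_{p,2}}$, and the two characterization criteria (i) and (ii). Every one of these assertions then transfers verbatim to $W^{s,\balpha}_p(\R^d)$: by Theorem~\ref{comp} the two spaces coincide as sets and carry equivalent norms, and an equivalent norm preserves boundedness and injectivity of the analysis and synthesis operators, unconditional convergence of~\eqref{sumSobHaar}, weak*-convergence in $\mathcal{S}'(\R^d)$, and the density used to extend the synthesis map. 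Consequently the whole basis structure carries over unchanged, and the norm equivalence for $\F^{s,\balpha}_{p,2}(\R^d)$ becomes the asserted $\|f\|_{W^{s,\balpha}_p(\R^d)}\asymp\|\lambda(f)\|_{\tilde{f}^{s,\balpha}_{p,2}}$.

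There is essentially no genuine obstacle in this argument; it is a bookkeeping combination of two previously established theorems. The only subtleties worth flagging are the reduction of the four-term minimum in~\eqref{scond} to the two-term minimum stated here under $q=2$, and the routine but necessary observation that passing to an equivalent norm affects none of the topological notions—basis property, unconditionality, and the two modes of convergence—appearing in the conclusions.
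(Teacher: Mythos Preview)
Your proposal is correct and matches the paper's approach exactly: the paper introduces Theorem~\ref{thm:haarSobolev} with the sentence ``combining Theorem~\ref{comp} with Theorem~\ref{thm:haarmain}, we also derive new characterizations of Sobolev spaces with the hyperbolic Haar system,'' and gives no further proof. Your additional remark that the four-term minimum in~\eqref{scond} collapses to $\min\{1/p,1-1/p\}$ when $q=2$ is a helpful detail the paper leaves implicit.
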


\section{Hyperbolic and classical (anisotropic) BLT spaces}

The next and final theorem of this paper complements the statement of Theorem~\ref{comp},
showing that in general the spaces $\widetilde{A}^{s,\balpha}_{p,q}(\R^d)$ and $A^{s,\balpha}_{p,q}(\R^d)$,
with $A\in\{B,F\}$, do not coincide.

\begin{theorem}\label{neg_comp}
Let $0< p, \, q \le \infty$, $s\in\mathbb{R}$, and $\balpha=(\alpha_1,\ldots,\alpha_d)>0$ with $\sum_{i=1}^{d}\alpha_i=d$. 
\begin{enumerate}[(i)]
\item If $B^{s,\balpha}_{p,q}(\mathbb{R}^d) = {\widetilde{B}}^{s,\balpha}_{p,q}(\mathbb{R}^d)$ then $p=q=2$. 
\item  In the range $0<p<\infty$: If $F^{s,\balpha}_{p,q}(\mathbb{R}^d) = {\widetilde{F}}^{s,\balpha}_{p,q}(\mathbb{R}^d)$ then $q=2$ and
$1<p<\infty$.
\end{enumerate}
\end{theorem}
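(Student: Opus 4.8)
The plan is to prove both parts by a single device: testing the hypothesized norm equivalence on tensor--product functions that collapse the $d$--dimensional comparison to a one--dimensional Littlewood--Paley problem. First note that if $A^{s,\balpha}_{p,q}(\R^d)=\widetilde{A}^{s,\balpha}_{p,q}(\R^d)$ holds as an identity of sets, then, both being quasi--Banach spaces continuously embedded in $\mathcal{S}'(\R^d)$, the closed graph theorem provides a constant $C\ge 1$ with $C^{-1}\|f\|_{A^{s,\balpha}_{p,q}}\le\|f\|_{\widetilde{A}^{s,\balpha}_{p,q}}\le C\|f\|_{A^{s,\balpha}_{p,q}}$ for all $f$. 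It therefore suffices to exhibit functions showing that this uniform two--sided bound is incompatible with $(p,q)$ outside the asserted ranges. The value of $s$ will never enter, consistent with the statement holding for all $s\in\R$.

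For the construction, fix a large $n$ and freeze the variables $x_2,\dots,x_d$: for $i=2,\dots,d$ let $\beta_i\in\mathcal{S}(\R)$ be a fixed profile whose Fourier transform is supported in the single dyadic block $\{2^{m_i-1}\le|\xi|\le 2^{m_i+1}\}$ with $m_i:=\lfloor\alpha_i n\rfloor$, and let $g\in\mathcal{S}(\R)$ be arbitrary with $\widehat{g}$ supported in $[-2^{M},2^{M}]$, where $M:=\lfloor\alpha_1\min_{i\ge 2}(m_i/\alpha_i)\rfloor$. Put $f:=g\otimes\beta_2\otimes\cdots\otimes\beta_d$. Since $\theta_{\bj}=\theta_{j_1}\otimes\cdots\otimes\theta_{j_d}$, only the indices $\bj=(j_1,m_2,\dots,m_d)$ with $2^{j_1}\lesssim 2^{M}$ contribute to the hyperbolic analysis, and there $\Delta_{\bj}f=(\Delta^{(1)}_{j_1}g)\otimes\beta_2\otimes\cdots\otimes\beta_d$, where $\Delta^{(1)}_{j_1}$ is the univariate analysis attached to $(\theta_{j})_{j}$. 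The choice of $M$ forces $j_1/\alpha_1\le m_i/\alpha_i$ for all active $j_1$ and all $i\ge 2$, so $\|\bj/\balpha\|_{\infty}=\max_{i\ge 2}(m_i/\alpha_i)=:\nu_n$ is independent of $j_1$; moreover $\widehat{f}$ meets only a bounded number of classical anisotropic shells around $\nu_n$. Hence $\|f\|_{A^{s,\balpha}_{p,q}}\asymp 2^{s\nu_n}\|f\|_p=2^{s\nu_n}\|g\|_p\prod_{i\ge 2}\|\beta_i\|_p$, with constants independent of $n$ and $g$.

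Evaluating the hyperbolic norms of this tensor product and cancelling the common factor $2^{s\nu_n}\prod_{i\ge 2}\|\beta_i\|_p$, the single active block in each frozen variable turns the hyperbolic Besov sum into $\big(\sum_{j_1}\|\Delta^{(1)}_{j_1}g\|_p^q\big)^{1/q}$ and the hyperbolic $\ell_q$--square function into $\big(\sum_{j_1}|\Delta^{(1)}_{j_1}g|^q\big)^{1/q}$. Thus the assumed equivalence reads, uniformly in $n$ and in band--limited $g$,
\[
\|g\|_{B^{0}_{p,q}(\R)}\asymp\|g\|_{L_p(\R)}\quad(\text{case }A=B),\qquad \|g\|_{F^{0}_{p,q}(\R)}\asymp\|g\|_{L_p(\R)}\quad(\text{case }A=F).
\]
Letting $M\to\infty$, band--limited functions exhaust a dense subset and these estimates pass to the identities $B^{0}_{p,q}(\R)=L_p(\R)$, respectively $F^{0}_{p,q}(\R)=L_p(\R)$, with equivalent norms. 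By the classical sharp theory on the line, $B^{0}_{p,q}(\R)=L_p(\R)$ holds if and only if $p=q=2$, while $F^{0}_{p,q}(\R)=L_p(\R)$ holds if and only if $1<p<\infty$ and $q=2$ (the latter being the Littlewood--Paley theorem, which fails for $q\neq 2$ and, for $p\le 1$, identifies $F^0_{p,2}$ with the local Hardy space $h_p\neq L_p$). This yields (i) and (ii).

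The main obstacle is the bookkeeping that makes the reduction exact: one must choose $M$ and the frozen blocks $m_i$ so that simultaneously (a) every contributing hyperbolic index carries the \emph{same} weight $\|\bj/\balpha\|_{\infty}=\nu_n$, and (b) $\widehat{f}$ intersects only boundedly many classical shells, so that $\|f\|_{A^{s,\balpha}_{p,q}}\asymp 2^{s\nu_n}\|f\|_p$ with constants uniform in $n$. A secondary point is the endpoint cases $p=\infty$ or $q=\infty$ (relevant only for (i)), which the density argument does not reach; there one instead invokes the sharpness of the classical embeddings $B^{0}_{p,\min\{p,2\}}(\R)\hookrightarrow L_p(\R)\hookrightarrow B^{0}_{p,\max\{p,2\}}(\R)$ to produce explicit band--limited extremizers $g_n$ with $\|g_n\|_{B^{0}_{p,q}}/\|g_n\|_{L_p}\to 0$ or $\infty$, directly contradicting the uniform bound.
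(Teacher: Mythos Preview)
Your approach is essentially the same as the paper's: both reduce the $d$-dimensional comparison to the one-dimensional identity $A^{0}_{p,q}(\R)=L_p(\R)$ by testing on tensor products $f=g\otimes\beta_2\otimes\cdots\otimes\beta_d$ with the $\beta_i$ frequency-localized at a single anisotropic level so that both the classical and the hyperbolic norms factor as $2^{s\nu_n}\prod_i\|\beta_i\|_p$ times $\|g\|_{L_p}$ and $\|g\|_{A^0_{p,q}}$, respectively. The paper then supplies the explicit one-dimensional extremizers (your ``classical sharp theory on the line'') in its Lemma~\ref{lem:auxeqiv2}; the only point where you should be a bit more careful is that, with $\widehat{\beta_i}$ supported on the full dyadic shell $\{2^{m_i-1}\le|\xi|\le 2^{m_i+1}\}$, up to three adjacent indices $j_i\in\{m_i-1,m_i,m_i+1\}$ can contribute rather than just $j_i=m_i$ --- the paper handles this by choosing the support of $\widehat{\beta_i}$ inside the smaller interval where $\theta_{m_i}=1$.
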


\begin{remark}
	The Besov result (i) follows directly from the very general investigations on
	embeddings between decomposition spaces conducted in~\cite{Voigt2016}. Even more, the findings
	there allow to strengthen the statement to more general independent parameters, namely 
	\begin{align*}
	B^{s_1,\balpha_1}_{p_1,q_1}(\mathbb{R}^d) = {\widetilde{B}}^{s_2,\balpha_2}_{p_2,q_2}(\mathbb{R}^d) \quad\Leftrightarrow\quad  p_1=p_2=q_1=q_2=2 \text{ and }\balpha_1=\balpha_2\text{ and }s_1=s_2\,.
	\end{align*}
	The results of~\cite{Voigt2016}, however, are not applicable in the proof of (ii) since Triebel-Lizorkin spaces do not fit into the decomposition space framework. In the sequel, we will therefore give a proof
	for the $F$-case~(ii) which by slight modifications would also provide a direct way to establish the $B$-case~(i).
\end{remark}

\noindent
Before we start, let us remind ourselves that the converse statement of (ii), the coincidence of $F^{s,\balpha}_{p,q}$ and $\widetilde{F}^{s,\balpha}_{p,q}$ when $1<p<\infty$ and $q=2$, is given by Theorem~\ref{comp}. The coincidence of 
$B^{s,\balpha}_{p,q}$ and $\widetilde{B}^{s,\balpha}_{p,q}$ when $p=q=2$, the converse of (i), is further observed in Remark~\ref{rem:comp}.

\begin{proof}[of Theorem~\ref{neg_comp} (ii)] 
{\em Step 1: Preparation.} Fix an anisotropy vector $\balpha=(\alpha_1,\ldots,\alpha_d)$ and consider a
univariate resolution of unity $(\theta_j)_{j\in\N_0}$ of the following form:

The generator $\theta_0\in\mathcal{S}(\R)$ shall satisfy
\begin{align*}
\supp \theta_0 \subset [-2^{\alpha_{\text{min}}/3}, 2^{\alpha_{\text{min}}/3}]
\qquad\text{and}\qquad \theta_0=1 \quad\text{on}\quad [-1,1]\,,
\end{align*}
and the functions $\theta_j$ for $j\in\N$ shall be obtained via $\theta_j(\cdot):= \theta_0(2^{-j}\cdot ) - \theta_0(2^{-(j-1)}\cdot )$.

Using $(\theta_j)_{j\in\N_0}$, we can then construct two multivariate resolutions of unity on $\R^d$. 
First, via simple tensorization, we get the hyperbolic resolution $(\theta_{\bj})_{\bj\in\N_0^d}$ with
\begin{align*}
\theta_{\bj} &:= \theta_{j_1} \otimes \ldots \otimes \theta_{j_d} \,,\quad \bj=(j_1,\ldots,j_d)\in\N_0^d\,.
\end{align*}
It clearly fulfills all the specifications formulated at the beginning of Section~3.

Second, putting $\varphi_0^{\balpha} := \theta_0 \otimes \ldots \otimes \theta_0$ and
\begin{align*}
\varphi_j^{\balpha} := \varphi_0^{\balpha}(2^{-j\balpha}\cdot ) - \varphi_0^{\balpha}(2^{-(j-1)\balpha}\cdot ) \quad\text{for }j\in\N \,,
\end{align*}
we obtain $(\varphi_j^{\balpha})_{j\in\N_0}$, which is a classical anisotropic resolution of unity in compliance with the definition from Subsection~2.1.

For parameters $\alpha>0$ and $\ell\in\N$, let us next introduce the intervals
\[
I^\alpha_\ell := 2^{(\ell-1)\alpha} \cdot [2^{\alpha_{\text{min}}/3},2^{\alpha}]
\quad\text{and}\quad  J^\alpha_\ell:=[-2^{\ell\alpha}, 2^{\ell\alpha}] \,.
\]
Then $\theta_0(2^{-\ell\alpha}\cdot)=1$ on $J^\alpha_\ell$ and
$\theta_0(2^{-\ell\alpha}\cdot)- \theta_0(2^{-(\ell-1)\alpha}\cdot)=1$ on $I^\alpha_\ell$.
In particular, $\theta_{j}=1$ on $I^{1}_j$ for every $j\in\N$ and
thus $\theta_{\bj}=1$ on $I^{1}_{j_1}\times\ldots\times I^{1}_{j_d}$.
Further, we have $\varphi_0^{\balpha}(2^{-j\balpha}\cdot )=1$ on $J^{\alpha_1}_j\times\ldots\times J^{\alpha_d}_j$ and as a consequence $\varphi_j^{\balpha} = 1$ on  $(J^{\alpha_1}_j\times\ldots\times J^{\alpha_d}_j) \backslash (J^{\alpha_1}_{j-1}\times\ldots\times J^{\alpha_d}_{j-1})$.
This, in turn, implies $\varphi_j^{\balpha} = 1$ on the subset 
$I^{\alpha_1}_j\times\cdots\times I^{\alpha_{d-1}}_j \times J^{\alpha_d}_j$.

Observe now that for every $\ell\in\N$ and every $i\in\{1,\ldots,d\}$ either
$I^{\alpha_i}_\ell\cap I^{1}_{\lfloor\ell\alpha_i\rfloor}$ or $I^{\alpha_i}_\ell\cap I^{1}_{\lfloor\ell\alpha_i\rfloor+1}$ is a nonempty interval of nonzero length.
This is due to the fact that always 
\begin{align}\label{interval_est}
2^{\gamma}\cdot\mathcal{L}(I^{\alpha_i}_\ell)  \le \mathcal{R}(I^{1}_{\lfloor\ell\alpha_i\rfloor})\le  \mathcal{R}(I^{\alpha_i}_\ell)
\quad\text{or}\quad  2^{\gamma}\cdot\mathcal{L}(I^{1}_{\lfloor\ell\alpha_i\rfloor+1}) \le \mathcal{R}(I^{\alpha_i}_\ell)\le \mathcal{R}(I^{1}_{\lfloor\ell\alpha_i\rfloor+1})  \,,
\end{align}
where $\mathcal{L}(I)$ and $\mathcal{R}(I)$ denote the left resp.\ right endpoint
of a given interval $I=[a,b]$ and $\gamma=\amin^2/8$. The verification of this fact is postponed
to Step~3 at the end of this proof.

As a consequence, for each $i\in\{1,\ldots,d\}$ and each $\ell\in\N$, we may pick one of those intersections with nonvanishing interior and denote it by $\tilde{I}^{(i)}_\ell$. Depending on our choice, we then either have
\begin{align}\label{auxlabel}
\tilde{I}^{(i)}_\ell=I^{\alpha_i}_\ell\cap I^{1}_{\lfloor\ell\alpha_i\rfloor}  \quad\text{or}\quad \tilde{I}^{(i)}_\ell=I^{\alpha_i}_\ell\cap I^{1}_{\lfloor\ell\alpha_i\rfloor+1}.
\end{align}
Due to the nonvanishing interior of $\tilde{I}^{(i)}_\ell$ we can further fix nontrivial functions 
\begin{align*}
h^{[i]}_\ell\in\mathcal{S}(\R) \quad\text{with}\quad \supp h^{[i]}_\ell\subset \tilde{I}^{(i))}_\ell \,,\quad i\in\{1,\ldots,d-1\}\,.  
\end{align*}
With this preparation we are finally ready for the main argumentation.

{\em Step 2: Main Proof.}
For $\ell\in\N$ let us consider $g_{\ell}:\,\R\to {\mathbb{C}}$ with the property 
\begin{align}\label{propgl}
\supp(\mathcal{F}g_{\ell})\subset J^{\alpha_d}_{\ell}=[-2^{\ell\alpha_d}, 2^{\ell\alpha_d}]
\end{align} and associate a multivariate function $f_{\ell}:\,\R^{d}\to {\mathbb{C}}$  defined by its Fourier transform
$$
\mathcal{F}f_{\ell}(\xi_1,\ldots, \xi_d) := h^{[1]}_{\ell}(\xi_1) h^{[2]}_{\ell}(\xi_2)\cdots h^{[d-1]}_{\ell}(\xi_{d-1}) \mathcal{F}g_{\ell}(\xi_d)\,,
$$
where $h^{[i]}_\ell$ are the functions introduced at the end of Step~1.

Since $\mathcal{F}f_{\ell}$ is supported inside $I^{\alpha_1}_\ell\times\ldots\times I^{\alpha_{d-1}}_\ell\times J^{\alpha_d}_{\ell}$, on which $\varphi^{\balpha}_{\ell}=1$ according to our considerations in Step~1, we can easily
compute the classical anisotropic Triebel-Lizorkin \mbox{(quasi-)norm} of $f_{\ell}$.
Denoting by $(\Delta^{\varphi}_j)_{j\in\N_0}$ the
Littlewood-Paley analysis associated to $(\varphi_j^{\balpha})_{j\in\N_0}$, 
we have
\begin{align*}
\Vert f_{\ell} \Vert_{F^{s,\balpha}_{p,q}(\R^d)} = \Big\Vert \Big( \sum_{j \ge 0} 2^{jsq} \vert {\Delta^{\varphi}_j} f_{\ell} \vert^q
\Big)^{\frac{1}{q}} \Big\Vert_p = 2^{\ell s} \Vert\Delta^{\varphi}_{\ell} f_{\ell}  \Vert_p = 2^{\ell s} \Vert f_{\ell} \Vert_p\,. 
\end{align*}
Moreover, as $f_{\ell}$ is a tensor product, we can compute
$$
\Vert f_{\ell} \Vert_p = \Vert {\mathcal{F}}^{-1} (h^{[1]}_{\ell}) \Vert_p\cdot...\cdot\Vert {\mathcal{F}}^{-1}
(h^{[d-1]}_{\ell}) \Vert_p \Vert g_{\ell} \Vert_p
= C_{\ell} \Vert g_{\ell} \Vert_p 
$$
with $C_{\ell}:= C^{(1)}_{\ell}\cdot...\cdot C^{(d-1)}_{\ell}$
and
$C^{(i)}_{\ell}:=\Vert {\mathcal{F}}^{-1} (h^{[i]}_{\ell}) \Vert_p$ 
for $i\in\{1,\ldots,d-1\}$.
Altogether, we end up with
$$
\Vert f_{\ell} \Vert_{F^{s,\balpha}_{p,q}} \asymp 2^{\ell s} C_{\ell} \Vert g_{\ell} \Vert_p\,.
$$

We proceed with the computation of the hyperbolic Triebel-Lizorkin (quasi-)norm 
of~$f_{\ell}$.
It follows right from the definition of the intervals $\tilde{I}^{(i)}_{\ell}$ from~\eqref{auxlabel} that there exist numbers $k_i(\ell)\in\N$, 
either taking the value $\lfloor\ell\alpha_i\rfloor$ or the value $\lfloor\ell\alpha_i\rfloor+1$, such
that $\tilde{I}^{(i)}_{\ell}\subset I^{1}_{k_i(\ell)}$.
Hence, due to $\supp \mathcal{F}f_\ell \subset \tilde{I}^{(1)}_{\ell}\times\cdots\times \tilde{I}^{(d-1)}_{\ell} \times J^{\alpha_d}_\ell$ the function $\mathcal{F}f_\ell$ is supported inside $I^{1}_{k_1(\ell)}\times\cdots\times I^{1}_{k_{d-1}(\ell)} \times J^{\alpha_d}_\ell$.

Let now $(\Delta^{\theta}_{\bj})_{\bj\in\N^d_0}$ denote the Littlewood-Paley analysis 
corresponding to $(\theta_{\bj})_{\bj\in\N^d_0}$ and let us abbreviate
$k_i(\ell)$ by $k_i$
and the vector $(k_1,\ldots,k_{d-1},j_d)$ by $\overline{\ell}_{j_d}$. 
Then, since
\[
\sum_{j_d =0}^{\lfloor\ell\alpha_d\rfloor+1} \theta_{\overline{\ell}_{j_d}} = 1
\quad\text{on}\quad I^{1}_{k_1}\times\cdots\times I^{1}_{k_{d-1}} \times J^{\alpha_d}_\ell \,,
\]
we calculate for the hyperbolic Triebel-Lizorkin (quasi-)norm of $f_{\ell}$
\begin{align*}
\Vert f_{\ell} \Vert_{\widetilde{F}^{s,\balpha}_{p,q} }& = \Big\Vert \Big( \sum_{\overline{j} \in {\mathbb{N}}_0^d}  2^{\max
	\{j_1/\alpha_1,\ldots, j_d /\alpha_d\}sq}\vert\Delta^{\theta}_{\overline{j}} f_{\ell}(\cdot) \vert^q  \Big)^{1/q} \Big\Vert_p \\
& = \Big\Vert \Big(  \sum_{j_d =0}^{\lfloor\ell\alpha_d\rfloor+1} 2^{\max \{k_1/\alpha_1,\ldots, k_{d-1}/\alpha_{d-1}, j_d/\alpha_d\}sq} \vert
\Delta^{\theta}_{\overline{\ell}_{j_d}} f_{\ell}(\cdot) \vert^q\Big)^{1/q} \Big\Vert_p\\
& \asymp  2^{\ell s} \Big\Vert \Big(\sum_{j_d=0}^{\lfloor\ell \alpha_d\rfloor+1} \vert \Delta^{\theta}_{\overline{\ell}_{j_d}} f_{\ell}(\cdot) \vert^q \Big)^{1/q} \Big\Vert_p
=  2^{\ell s} C_{\ell} \Vert g_{\ell} \Vert_{F^{0}_{p,q}}
\end{align*}
with the same constant $C_\ell$ as obtained before in the computation of $\Vert f_{\ell} \Vert_{F^{s,\balpha}_{p,q}}$.

Now we come to the core argument.
Assuming that the spaces $F^{s,\balpha}_{p,q}(\R^d)$ and $\widetilde{F}^{s,\balpha}_{p,q}(\R^d)$ coincide, the
associated (quasi-)norms would be equivalent. By our calculations, this would imply that $\Vert g_{\ell} \Vert_{F^0_{p,q}(\R)}$ is equivalent to
$\Vert g_{\ell} \Vert_{L_p(\R)}$ for any band-limited function $g_{\ell}$ with frequency support as in~\eqref{propgl}. Moreover, since the proof holds true for all $\ell\in\N$ this 
equivalence remains valid for any band-limited function $g$ on $\R$.

But, as a consequence of Lemma~\ref{lem:auxeqiv2}(iii), since the sequence $(f^{(3)}_N)_N$ constructed in its proof consists of band-limited functions, this is
only possible in the range $1<p<\infty$. Furthermore, if $1<p<\infty$ the band-limited functions are dense in $L_p(\R)$ as well as $F^{0}_{p,q}(\R)$.
Hence, by Lemma~\ref{lem:auxeqiv2}(i) also $q=2$ is a necessary condition. It now only remains
to verify~\eqref{interval_est}.

{\em Step 3: Proof of~\eqref{interval_est}.}
We distinguish two cases depending on the size of the quantity $\delta:=\ell\alpha_i-\lfloor\ell\alpha_i\rfloor\in[0,1)$. 
Let us subsequently abbreviate $\rho:=\frac{\amin}{4+\amin}$ and 
$\sigma:=\frac{8\amin}{3\amin+12}=\frac{2}{3}(1-\rho)\amin$. 
Recalling that $\amin=\min_{i\in\{1,\ldots,d\}}\{\alpha_i\}\in(0,1]$, we note that 
$\rho\in(0,\frac{1}{5}]$ and $\sigma\in(0,\frac{2}{3}]$. 

In case $\delta\in[0,\sigma)$ we have 
$
\delta\le(1-\rho)(\alpha_i-\amin/3)
$
and thus 
\begin{align*}
\log_2\mathcal{R}\big(I^{1}_{\lfloor\ell\alpha_i\rfloor}\big) &= \lfloor \ell\alpha_i\rfloor = \ell\alpha_i - \delta \ge \ell\alpha_i - (1-\rho)(\alpha_i-\amin/3) \\
&= (\ell-1)\alpha_i + \amin/3 + \rho(\alpha_i -\amin/3) 
\ge \log_2\mathcal{L}\big(I^{\alpha_i}_{\ell}\big) + \frac{2}{3}\rho\amin \,,
\end{align*} 
where we used
$\mathcal{L}\big(I^{\alpha_i}_{\ell}\big)=2^{(\ell-1)\alpha_i + \amin/3}$
and $\mathcal{R}\big(I^{1}_{\lfloor\ell\alpha_i\rfloor}\big)=2^{\lfloor \ell\alpha_i\rfloor}$.
In view of $\gamma=\amin^2/8< \frac{2}{3}\rho\amin$ and since we always have
$\lfloor\ell\alpha_i\rfloor\le \ell\alpha_i \le \lfloor\ell\alpha_i\rfloor+1$, i.e.\
\begin{align}\label{RPoEst}
\log_2\mathcal{R}\big(I^{1}_{\lfloor\ell\alpha_i\rfloor}\big)\le  \log_2\mathcal{R}\big(I^{\alpha_i}_\ell\big)\le \log_2\mathcal{R}\big(I^{1}_{\lfloor\ell\alpha_i\rfloor+1}\big)   \,,
\end{align}
the left inequality in \eqref{interval_est} is hence valid in the respective range of $\delta$.

In case $\delta\in[\sigma,1)$ we first estimate 
\[
\delta>\frac{7\amin}{3\amin+1}=(1-\rho)+\rho\amin/3 \ge (1-\rho)\amin + \rho\amin/3 \,,
\]
from which we deduce, using $\ell\alpha_i=\lfloor\ell\alpha_i\rfloor +\delta$
and $\mathcal{L}\big(I^{1}_{\lfloor\ell\alpha_i\rfloor+1}\big)=2^{\lfloor\ell\alpha_i\rfloor + \amin/3}$,
\begin{align*}
\log_2 \mathcal{R}\big(I^{\alpha_i}_\ell\big)=\ell\alpha_i  > \lfloor\ell\alpha_i\rfloor + (1-\rho)\amin + \rho\amin/3 = \log_2 \mathcal{L}\big(I^{1}_{\lfloor\ell\alpha_i\rfloor+1}\big)  + \frac{2}{3}(1-\rho)\amin \,.
\end{align*} 
This time, again taking into account~\eqref{RPoEst} and $\gamma=\amin^2/8< \frac{2}{3}(1-\rho)\amin$,
the right inequality in~\eqref{interval_est} holds true.
Altogether, the proof of~\eqref{interval_est} is thus finished. 
\end{proof}

The behavior of the $L_p$-(quasi-)norms
in relation to the $A^0_{p,q}$-(quasi-)norms is crucial for the proof of
Theorem~\ref{neg_comp}. Concretely, we have shown for $A\in\{B,F\}$
\begin{align}\label{aaabbc}
A^{s,\balpha}_{p,q}(\mathbb{R}^d) = {\widetilde{A}}^{s,\balpha}_{p,q}(\mathbb{R}^d) \quad\Longleftrightarrow\quad 
\|f\|_{A^{0}_{p,q}(\mathbb{R})} \asymp \|f\|_{L_{p}(\mathbb{R})} \quad\text{for band-limited functions $f$}.
\end{align}
On the right-hand side, the (quasi-)norms are thereby all classical and only the  univariate case matters.
Using known embedding theorems, the exact parameters for equality could therefore be determined (see \cite{triebel:1978}  Section 2.3.2 or
\cite{SicTri} Theorem 3.1.1., for example). 

Prefering a direct and shorter route, the following lemma provides a simple and quantitative argument for what we need. It investigates the behavior of the respective (quasi-)norms for certain sequences of test functions. 
As a consequence of statement (i), we extract the necessity $p=q=2$ for equality
in \eqref{aaabbc}.
From (ii) we further obtain $p=q$ in the B-case. Statement (iii) yields
$1<p<\infty$ in the F-case. Altogether, this shows that the Sobolev spaces in
Theorem~\ref{comp} are precisely those, where equality holds true.

\begin{lemma}\label{lem:auxeqiv2}
Assume $0<p<\infty$, $0<q\le\infty$, $A\in\{B,F\}$. 
There are sequences $(f^{(i)}_N)_{N\in\N}$, $i\in\{1,2,3\}$, of functions on $\R$ such that
	\begin{enumerate}[(i)]
		\item $\|f^{(1)}_N\|_p \asymp N^{1/2}$ and $\|f^{(1)}_N\|_{A^0_{p,q}(\R)}\asymp N^{1/q}$,
		\item $\|f^{(2)}_N\|_p \asymp N^{1/p}$ and $\|f^{(2)}_N\|_{B^0_{p,q}(\R)}\asymp N^{1/q}$,
		\item $\|f^{(3)}_N\|_p \asymp 2^{N(1-1/p)}$ and $\|f^{(3)}_N\|_{F^0_{p,q}(\R)}\gtrsim 
		\begin{cases} 1 &,\,p<1, \\ N^{1/p} &,\,p\ge 1  .
		\end{cases}$ 
	\end{enumerate}
\end{lemma}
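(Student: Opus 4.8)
The plan is to realize all three sequences from a single family of building blocks living in pairwise disjoint dyadic frequency rings, so that the Littlewood--Paley pieces decouple completely and the $B^{0}_{p,q}$- and $F^{0}_{p,q}$-norms become elementary; the three examples then differ only in how these blocks interfere in physical space. Concretely, I would fix $\phi\in\mathcal{S}(\R)$ with $\mathcal{F}\phi$ supported in a tiny ball $\{|\xi|\le\varepsilon\}$ and $\phi\not\equiv0$, and set $g_j:=e^{i2^{j}\cdot}\phi$, so that $\mathcal{F}g_j=\mathcal{F}\phi(\cdot-2^{j})$ sits well inside the ring $\{2^{j-1}\le|\xi|\le 2^{j+1}\}$. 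For any spatially shifted superposition $\sum_{j=1}^{N}g_j(\cdot-y_j)$ the $j$-th Littlewood--Paley block is then exactly $g_j(\cdot-y_j)$, which is what makes the sequence norms explicit.

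For (i) and (ii) I superimpose these blocks. Taking $f^{(1)}_N:=\sum_{j=1}^{N}g_j$ at a common location, both $|g_j|=|\phi|$ and $\|g_j\|_p=\|\phi\|_p$ force the $F$- and $B$-norms to equal $N^{1/q}\|\phi\|_p\asymp N^{1/q}$, while $f^{(1)}_N=\phi\cdot\sum_{j=1}^{N}e^{i2^{j}\cdot}$ is a windowed lacunary sum whose $L_p$-norm is comparable, for every $0<p<\infty$, to the $\ell_2$-norm of its coefficients; the disjoint Fourier supports give $\|f^{(1)}_N\|_2=N^{1/2}\|\phi\|_2$, hence $\|f^{(1)}_N\|_p\asymp N^{1/2}$. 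For (ii) I instead pull the blocks apart, $f^{(2)}_N:=\sum_{j=1}^{N}g_j(\cdot-y_j)$ with centres $y_j$ chosen so far apart that the Schwartz tails barely overlap: translation preserves $\|g_j\|_p$, so the $B$-norm is again $N^{1/q}\|\phi\|_p\asymp N^{1/q}$, whereas the near-disjointness of the $N$ bumps makes $\|f^{(2)}_N\|_p^p\asymp\sum_{j=1}^{N}\|\phi\|_p^p=N\|\phi\|_p^p$, i.e.\ $\|f^{(2)}_N\|_p\asymp N^{1/p}$; a separation growing mildly with $N$ suffices to absorb the cross terms for all $0<p<\infty$.

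For (iii) I use a single high spike. Putting $\Theta_0:=\mathcal{F}^{-1}\theta_0$ and $f^{(3)}_N:=2^{N}\Theta_0(2^{N}\cdot)$, we get $\mathcal{F}f^{(3)}_N=\theta_0(2^{-N}\cdot)$ supported in $\{|\xi|\le 2^{N+1}\}$ (band-limited) and $\|f^{(3)}_N\|_p=2^{N(1-1/p)}\|\Theta_0\|_p\asymp 2^{N(1-1/p)}$ by pure scaling. Since $\theta_0(2^{-N}\cdot)\equiv1$ on the first $N$ rings, the Littlewood--Paley pieces satisfy $\Delta_j f^{(3)}_N=\Theta_j$ (with $\Theta_j:=\mathcal{F}^{-1}\theta_j$) for $0\le j\le N-1$, whence the square function dominates $|\Theta_m|$ on the annulus $A_m:=\{2^{-m-1}\le|x|<2^{-m}\}$ for each such $m$. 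The self-similarity $\Theta_m(2^{-m}y)=2^{m-1}P(y)$ with $P:=2\Theta_0-\Theta_0(\cdot/2)$ gives $\int_{A_m}|\Theta_m|^{p}\asymp 2^{m(p-1)}$, and summing over the disjoint rings yields
\[
\|f^{(3)}_N\|_{F^0_{p,q}}^{p}\;\gtrsim\;\sum_{m=1}^{N-1}2^{m(p-1)},
\]
which is $\gtrsim 1$ for $p<1$, $\asymp N$ for $p=1$, and $\gtrsim 2^{N(p-1)}\ge N$ for $p>1$; this is exactly the claimed lower bound in both regimes $p<1$ and $p\ge1$.

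The norm bookkeeping above is routine; the two genuinely delicate points are the $p\le1$ case of the lacunary $L_p\asymp\ell_2$ equivalence used in (i), and the pointwise lower bound on the square function in (iii). The latter is the main obstacle: it rests on $c_p:=\int_{1/2\le|y|<1}|P(y)|^{p}\,dy>0$, which I would secure by observing that $P$ is a nonzero real-analytic function — indeed $\Theta_0$ is entire of exponential type, being the inverse Fourier transform of the compactly supported $\theta_0$, and $P(0)=\Theta_0(0)=(2\pi)^{-1/2}\int\theta_0\neq0$ — so that $P$ can vanish only on a set of Lebesgue measure zero, keeping $c_p$ strictly positive.
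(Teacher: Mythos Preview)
Your proposal is essentially correct; part (iii) coincides with the paper's argument, while parts (i) and (ii) take a genuinely different route. For (i) the paper uses compactly supported wavelets with random $\pm1$ signs and Khintchine's inequality to obtain $\|f^{(1)}_N\|_p\asymp N^{1/2}$, the $A^0_{p,q}$-norm coming from the wavelet isomorphism of Theorem~\ref{main_wav}; your deterministic construction via windowed lacunary exponentials avoids the probabilistic step, trading it for Zygmund's $L_p$--$\ell_2$ equivalence for Hadamard-lacunary sums (valid for all $0<p<\infty$, the windowing by the Schwartz $\phi$ being handled by splitting into unit intervals). For (ii) the paper again uses compactly supported wavelets, now placed at exactly disjoint spatial locations with amplitudes $2^{j/p}$, so the $L_p$-computation is exact; your Schwartz translates need a tail estimate, routine for $p\ge1$ and requiring a bit more care when $p<1$. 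Your approach is lighter on machinery (no wavelet characterization), while the paper's compact supports make every computation exact. One technical point you glossed over in (i) and (ii): for the Littlewood--Paley blocks to isolate each $g_j$ \emph{exactly} you need $\theta_j\equiv1$ on an open set containing $\operatorname{supp}\mathcal{F}g_j$; with the generic choice $\operatorname{supp}\theta_0=[-2,2]$ there is no such plateau, so you should either take $\theta_0$ with smaller support and place the modulation frequency inside the resulting plateau, or invoke independence of the norms from the resolution---harmless, but it should be said.
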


\begin{remark}
	In case $q=\infty$ we need to interpret $N^{1/q}\asymp 1$. Further, the case $A=B$ with $p=\infty$ is not considered in Lemma~\ref{lem:auxeqiv2}. By an analogous argument, one can show however that (ii) holds true also for $p=\infty$. So, for $B^{0}_{\infty,q}(\R)$ we have the necessary condition $q=\infty$ to be equivalent to $L_{\infty}(\R)$.
	It is further not difficult to show that the sequence $(f^{(1)}_N)_{N\in\N_0}$ from (i) fulfills $\|f^{(1)}_N\|_\infty \nearrow \infty$ whereas $\|f^{(1)}_N\|_{B^0_{\infty,\infty}(\R)}\asymp 1$. Hence, $B^{0}_{\infty,\infty}(\R)\neq L_{\infty}(\R)$.
\end{remark}

\begin{proof}
ad (i):\: We provide the proof for $q<\infty$. Let $\bar{\varepsilon}=(\varepsilon_0,\varepsilon_1,\ldots)\in\{-1,1\}^{\N_0}$ and define 
\begin{align*}
f_{N,\bar{\varepsilon}}:=\sum_{j=0}^{N} \varepsilon_j \sum_{k=0}^{2^{j}} \psi_{j,k} \,,
\end{align*}	
where $(\psi_{j,k})_{j,k}$ shall be a compactly supported, orthogonal, and $L_\infty$-normalized wavelet system 
with sufficient vanishing moments and smoothness to characterize the space
$A^0_{p,q}(\R)$. Further, for each $j\in\N_0$ and $k\in\{0,\ldots,2^{j}\}$, we assume the support condition $\supp(\psi_{j,k})\subset[0,1]$.

Now we note that in a univariate setting, as considered here, we have the coincidence $\tilde{A}^0_{p,q}(\R)=A^0_{p,q}(\R)$. Hence, using the wavelet isomorphism established by Theorem~\ref{main_wav} for the F-scale and taking into account Remark~\ref{rem:main_wav} for the B-scale, we immediately obtain
\begin{align*}
\|f_{N,\bar{\varepsilon}}\|_{B^0_{p,q}(\R)}
&\asymp \Big( \sum_{j=0}^{N} \Big\| \sum_{k=0}^{2^{j}} \chi_{j,k} \Big\|^{q}_{p} \Big)^{1/q}\asymp N^{1/q} \,, \\
\|f_{N,\bar{\varepsilon}}\|_{F^0_{p,q}(\R)}
&\asymp  \Big\| \Big( \sum_{j=0}^{N} \Big| \sum_{k=0}^{2^{j}} \chi_{j,k} \Big|^{q} \Big)^{1/q} \Big\|_{p} \asymp N^{1/q} \,,
\end{align*}
whereby the (quasi-)norms on the left-hand side do not depend on the choice of $\bar{\varepsilon}$.

From here we proceed with a probabilistic argument and interpret $\bar{\varepsilon}$ as a Rademacher random variable. Then,
for the expectation of the $L_p$-(quasi-)norms over $\bar{\varepsilon}$, 
\begin{align*}
\E_{\bar{\varepsilon}}(\| f_{N,\bar{\varepsilon}}\|^p_p)
=\int_{0}^{1} \int_{\R} \Big| \sum_{j=0}^{N} r_{j}(t)  \sum_{k=0}^{2^{j}} \psi_{j,k}(x) \Big|^p \,dx\,dt \,,
\end{align*}
where $r_j(t):=\sgn(\sin 2^{j}\pi t)$ is the $j$-th Rademacher function.
Applying Khintchine's inequality, we obtain from this
\begin{align*}
\E_{\bar{\varepsilon}}(\| f_{N,\bar{\varepsilon}}\|^p_p)
&=\int_{0}^{1} \int_{0}^{1} \Big| \sum_{j=0}^{N}  r_{j}(t) \sum_{k=0}^{2^{j}} \psi_{j,k}(x) \Big|^p \,dt\,dx \\
&\asymp 
\int_{0}^{1} \Big( \sum_{j=0}^{N}  \Big|   \sum_{k=0}^{2^{j}} \psi_{j,k}(x)  \Big|^{2}  \Big)^{p/2} \,dx \,.
\end{align*}
Next, we define for $j\in\N_0$ the auxiliary functions
\begin{align*}
F_j:= \frac{1}{M} \Big|   \sum_{k=0}^{2^{j}} \psi_{j,k}(x)  \Big|^{2} \quad\text{with $M>0$ such that}\quad \|F_j\|_{\infty}\le 1.
\end{align*}

Observe that $0\le F_j\le 1$. Then, in case $0<p\le 2$,
\begin{align*}
N^{-p/2} \cdot \E_{\bar{\varepsilon}}(\| f_{N,\bar{\varepsilon}}\|^p_p) \asymp 
\int_{0}^{1} \Big( \frac{1}{N} \sum_{j=0}^{N} F_j(x)  \Big)^{p/2} \,dx 
\ge \frac{1}{N} \int_{0}^{1}   \sum_{j=0}^{N} F_j(x)   \,dx \,,
\end{align*}
and $\int_0^1 F_j(x) \,dx \asymp 1$. Hence $ \E_{\bar{\varepsilon}}(\| f_{N,\bar{\varepsilon}}\|^p_p) \gtrsim N^{p/2}$. Also, since $2/p\ge1$, with H\"older
\begin{align*}
\E_{\bar{\varepsilon}}(\| f_{N,\bar{\varepsilon}}\|^p_p) \asymp 
\int_{0}^{1} \Big(  \sum_{j=0}^{N} F_j(x)  \Big)^{p/2} \,dx 
\lesssim \Big(  \int_{0}^{1}  \sum_{j=0}^{N} F_j(x)  \,dx \Big)^{p/2} \asymp N^{p/2}\,.
\end{align*}

In case $2<p<\infty$, we again argue with H\"older
\begin{align*}
 \E_{\bar{\varepsilon}}(\| f_{N,\bar{\varepsilon}}\|^p_p) \asymp 
\int_{0}^{1} \Big(  \sum_{j=0}^{N} F_j(x)  \Big)^{p/2} \,dx 
\gtrsim \Big(  \int_{0}^{1}  \sum_{j=0}^{N} F_j(x)  \,dx \Big)^{p/2} \asymp N^{p/2}\,.
\end{align*}

Further, since $2/p<1$,
\begin{align*}
N^{-p/2} \cdot \E_{\bar{\varepsilon}}(\| f_{N,\bar{\varepsilon}}\|^p_p) \asymp 
\int_{0}^{1} \Big( \frac{1}{N} \sum_{j=0}^{N} F_j(x)  \Big)^{p/2} \,dx 
\lesssim   \frac{1}{N} \int_{0}^{1}  \sum_{j=0}^{N} F_j(x)  \,dx \asymp 1\,.
\end{align*}

Altogether, these estimates show $\E_{\bar{\varepsilon}}(\| f_{N,\bar{\varepsilon}}\|^p_p) \asymp N^{p/2}$. As a consequence, we can choose $f_{N}:=f_{N,\bar{\varepsilon}(N)}$ such that $\|f_N\|^p_p \asymp N^{p/2}$, or equivalently $\|f_N\|_p \asymp \sqrt{N}$.

ad (ii):\:  With the same wavelet system $(\psi_{j,k})_{j,k}$ as before, $L_\infty$-normalized, define
\begin{align*}
f_{N}:=\sum_{j=0}^{N} 2^{j/p} \psi_{j,k(j)} \,,
\end{align*}
where $k(j)$ is chosen such that the (spatial) support of the wavelets is mutually disjoint. Then, using again the wavelet isomorphism from Theorem~\ref{main_wav} and Remark~\ref{rem:main_wav}, we deduce
\begin{align*}
\|f_{N}\|_{B^0_{p,q}(\R)} = \|f_{N}\|_{\widetilde{B}^0_{p,q}(\R)} 
&\asymp \Big( \sum_{j=0}^{N} \Big\|  2^{j/p} \chi_{j,k(j)} \Big\|^{q}_{p} \Big)^{1/q} = \Big( \sum_{j=0}^{N} \big(  2^{j/p} 2^{-j/p} \big)^{q} \Big)^{1/q} \asymp N^{1/q} \,. 
\end{align*}
For the $L_p$-(quasi-)norm we obtain, due to the disjoint support,
\begin{align*}
\|f_{N}\|_{L_p(\R)} = \Big\|  \sum_{j=0}^{N} 2^{j/p} \psi_{j,k(j)} \Big\|_{p} = \Big(  \sum_{j=0}^{N} 2^{j} \| \psi_{j,k(j)} \|^{p}_p \Big)^{1/p} \asymp N^{1/p} \,.
\end{align*}

ad (iii):\: Finally, let $(\varphi_{j})_{j}$ be a (standard) dyadic resolution of unity, with $\varphi_0=1$ in a neighborhood of $0$ and $\varphi_1=\varphi_0(\cdot/2)-\varphi_0$, and put
\begin{align*}
f_{N}:= \mathcal{F}^{-1} \varphi_{0}(2^{-N}\cdot) \,.
\end{align*}
Then $f_{N}= 2^{N} F_1(2^{N}\cdot)$. 
For the $L_p$-(quasi-)norm we thus
compute 
\begin{align*}
\|f_{N}\|_{L_p(\R)} = 2^{N} \| F_1(2^{N}\cdot) \|_{p} \asymp 2^{N(1-1/p)} \,.
\end{align*}
Turning to the $F^0_{p,q}(\R)$-(quasi-)norm, for $N\ge2$, we calculate, writing
$\Phi_0:=\mathcal{F}^{-1} \varphi_{0}$ and $\Phi_1:=\mathcal{F}^{-1} \varphi_{1}$,
\begin{align*}
\|f_{N}\|_{F^0_{p,q}(\R)}
&=  \Big\| \Big( \sum_{j=0}^{\infty} \Big| \big(\mathcal{F}^{-1} (\varphi_j \cdot \varphi_0(2^{-N}\cdot))\big)(\cdot) \Big|^{q} \Big)^{1/q} \Big\|_{p} \\
&\ge  \Big\| \Big( \sum_{j=0}^{N-1} \Big| \big(\mathcal{F}^{-1} \varphi_j  \big)(\cdot) \Big|^{q} \Big)^{1/q} \Big\|_{p} \\
&= \Big\| \Big( |\Phi_{0}|^q + \sum_{j=1}^{N-1} \Big|  2^{j-1}  \Phi_1(2^{j-1}\cdot )   \Big|^{q} \Big)^{1/q} \Big\|_{p} \\
&= \Big\| \Big( |\Phi_{0}|^q + \sum_{j=0}^{N-2} \Big|  2^{j} \Phi_1(2^{j}\cdot ) \Big|^{q} \Big)^{1/q} \Big\|_{p} \,.
\end{align*}
Note that $\Phi_1$ has (infinitely many) vanishing moments and is thus oscillatory.
Assuming w.l.o.g. $|\Phi_1|> \delta$ on a set $I\subset[1,2)$, with $\delta>0$ being some fixed constant,
we can proceed
\begin{align*}
\|f_{N}\|_{F^0_{p,q}(\R)}
&\gtrsim \Big\| \Big( \sum_{j=0}^{N-2} \Big|  2^{j} \chi_{I}(2^{j}\cdot ) \Big|^{q} \Big)^{1/q} \Big\|_{p} \\
&= \Big\|  \sum_{j=0}^{N-2}  2^{j} \chi_{I}(2^{j}\cdot )    \Big\|_{p} 
=\Big(  \sum_{j=0}^{N-2} 2^{jp} \int_{\R} \chi_{I}(2^{j}x ) \,dx  \Big)^{1/p} \\
&\asymp \Big(  \sum_{j=0}^{N-2} 2^{j(p-1)}  \Big)^{1/p}
 \gtrsim \begin{cases}
 1 &\,, p<1\,, \\
 N^{1/p} &\,, p\ge 1\,.
 \end{cases} 
\end{align*} 
\end{proof}


\noindent
{\bf Acknowledgment.} We warmly thank Glenn Byrenheid, Nadi'ia Derevianko, Dinh D\~ung, Dorothee Haroske, Peter Oswald, Andreas Seeger, Winfried Sickel, Vladimir N.
Temlyakov, Hans Triebel, Felix Voigtl\"ander, and Stephane Jaffard for stimulating discussions and comments. This work has been partially supported by the
ANR grant \emph{AMATIS} (ANR2011 BS01 011 02) and the CNRS, Groupe de Recherche \emph{Analyse Multifractale}. In
addition we gratefully acknowledge support by the German Research Foundation (DFG) and the Emmy-Noether programme,
Ul-403/1-1.

\bibliographystyle{plain}
\bibliography{SobolevHWT}

\end{document}